\newcommand{\de}{\partial}
\newcommand{\db}{\overline{\partial}}
\newcommand{\ddt}{\frac{\partial}{\partial t}}
\newcommand{\ddbar}{\sqrt{-1} \partial \overline{\partial}}
\newcommand{\Ric}{\mathrm{Ric}}
\newcommand{\ov}[1]{\overline{#1}}
\newcommand{\mn}{\sqrt{-1}}
\newcommand{\tr}[2]{\mathrm{tr}_{#1}{#2}}
\newcommand{\ti}[1]{\tilde{#1}}
\newcommand{\vp}{\varphi}
\newcommand{\ve}{\varepsilon}
\newcommand{\of}{\omega_{\textrm{flat}}}
\newcommand{\Om}{\Omega}
\newcommand{\bd}{\begin{enumerate}}
\newcommand{\ed}{\end{enumerate}}
\newcommand{\btheorem}{\begin{theorem}}
\newcommand{\etheorem}{\end{theorem}}
\newcommand{\bproposition}{\begin{proposition}}
\newcommand{\eproposition}{\end{proposition}}
\newcommand{\bdefinition}{\begin{definition}}
\newcommand{\edefinition}{\end{definition}}
\newcommand{\bcorollary}{\begin{corollary}}
\newcommand{\ecorollary}{\end{corollary}}
\newcommand{\bproof}{\begin{proof}}
\newcommand{\eproof}{\end{proof}}
\newcommand{\bremark}{\begin{remark}}
\newcommand{\eremark}{\end{remark}}
\newcommand{\eexample}{\end{example}}
\newcommand{\bexample}{\begin{example}}
\newcommand{\elemma}{\end{lemma}}
\newcommand{\blemma}{\begin{lemma}}
\newcommand{\ee}{\end{eqnarray*}}
\newcommand{\be}{\begin{eqnarray*}}
\newcommand{\Tf}{T^{0}}
\newcommand{\Kod}{\textrm{Kod}}
\renewcommand{\leq}{\leqslant}
\renewcommand{\geq}{\geqslant}
\renewcommand{\le}{\leqslant}
\renewcommand{\ge}{\geqslant}
\newcommand{\gf}{g_{\textrm{flat}}}
\newcommand{\mS}{\mathcal{S}}
\numberwithin{equation}{section}
\begin{document}
\newtheorem{claim}{Claim}
\newtheorem{theorem}{Theorem}[section]
\newtheorem{lemma}[theorem]{Lemma}
\newtheorem{corollary}[theorem]{Corollary}
\newtheorem{proposition}[theorem]{Proposition}
\newtheorem{question}{question}[section]
\newtheorem{conjecture}[theorem]{Conjecture}

\theoremstyle{definition}
\newtheorem{remark}[theorem]{Remark}

\title[Collapsing of the Chern-Ricci flow on elliptic surfaces]{Collapsing of the Chern-Ricci flow on elliptic surfaces$^*$}

\author[V. Tosatti]{Valentino Tosatti}
\thanks{$^{*}$Research supported in part by NSF grants DMS-1105373 and  DMS-1236969.  The first-named author is supported in part by a Sloan Research Fellowship.}
\address{Department of Mathematics, Northwestern University, 2033 Sheridan Road, Evanston, IL 60208}
\author[B. Weinkove]{Ben Weinkove}
\author[X. Yang]{Xiaokui Yang}
\begin{abstract}  We investigate the Chern-Ricci flow, an evolution equation of Hermitian metrics generalizing the K\"ahler-Ricci flow, on elliptic bundles over a Riemann surface of genus greater than one.  We show that, starting at any Gauduchon metric, the flow collapses the elliptic fibers
and the metrics converge to the pullback of a K\"ahler-Einstein metric from the base.
Some of our estimates are new even for the K\"ahler-Ricci flow.  A consequence of our result is that, on every minimal non-K\"ahler surface of Kodaira dimension one, the Chern-Ricci flow converges in the sense of Gromov-Hausdorff to an orbifold K\"ahler-Einstein metric on a Riemann surface.
\end{abstract}

\maketitle

\section{Introduction}

The Chern-Ricci flow is an evolution equation for Hermitian metrics on complex manifolds.  Given a starting Hermitian metric $g_0$, which we represent as a real $(1,1)$ form $\omega_0 = \sqrt{-1} (g_0)_{i\ov{j}}dz^i \wedge d\ov{z}^j$, the Chern-Ricci flow is given by
\begin{equation} \label{crf0}
\frac{\de}{\de t}\omega=-\Ric(\omega), \qquad \omega|_{t=0} =\omega_0,
\end{equation}
where $\Ric(\omega) := - \ddbar \log \det g$ is the Chern-Ricci form of $\omega$.  In the case when $g_0$ is K\"ahler, namely $d\omega_0=0$, (\ref{crf0}) coincides with the K\"ahler-Ricci flow.

 The Chern-Ricci flow was first introduced by Gill \cite{G} in the setting of manifolds with $c_1^{\textrm{BC}}(M)=0$, where $c_1^{\textrm{BC}}(M)$ is the first Bott-Chern class given by$$c_1^{\textrm{BC}}(M) = [ \Ric(\omega)] \in H^{1,1}_{\textrm{BC}}(M, \mathbb{R}) =\frac{ \{ \textrm{closed real $(1,1)$-forms} \}}{  \ddbar( C^{\infty}(M))},$$
for any Hermitian metric $\omega$.  Making use of an estimate for the complex Monge-Amp\`ere equation \cite{Ch, TW3},
Gill showed that solutions to (\ref{crf0}) on manifolds with $c_1^{\textrm{BC}}(M)=0$ exist for all time and converge to Hermitian metrics with vanishing Chern-Ricci form.  Gill's theorem generalizes the convergence result of Cao \cite{Ca} for the K\"ahler-Ricci flow (which made use of estimates of Yau \cite{Y1}).

The first and second named authors investigated the Chern-Ricci flow on more general manifolds \cite{TW, TW2}  and proved a number of further results.  It was shown in particular that the maximal existence time for the flow can be determined from the initial metric; that the Chern-Ricci flow on manifolds with negative first Chern class smoothly deforms Hermitian metrics to K\"ahler-Einstein metrics; that when starting on a complex surface with Gauduchon initial metric $\omega_0$ (meaning $\de\db \omega_0=0$), the Chern-Ricci flow exists until the volume of the manifold or a curve of negative self-intersection goes to zero; and that on surfaces with nonnegative Kodaira dimension the Chern-Ricci flow contracts an exceptional curve when one exists.  There are analogues of all of these results for the K\"ahler-Ricci flow \cite{Ca, FIK, TZha, SW1}.

For the purpose of this discussion it will be useful to make reference to the following condition:
$$(*) \quad \textrm{$M$ is a minimal non-K\"ahler complex surface and $\omega_0$ is Gauduchon}.$$   Surfaces which satisfy $(*)$ are of significant interest as they are not yet completely classified.  Recall that a surface is minimal if it contains no $(-1)$-curves and every complex surface is birational to a minimal one by via a finite sequence of blow downs.  Every complex surface admits a Gauduchon metric, and the Gauduchon condition is preserved by the Chern-Ricci flow.

We remark that Streets-Tian \cite{StT} earlier proposed the use of a different parabolic flow, called the Pluriclosed Flow, to study complex non-K\"ahler surfaces (see  Section 2 of \cite{TW} for some discussion on how this flow differs from the Chern-Ricci flow).

The Kodaira-Enriques classification (see \cite{bhpv}) tells us that manifolds $M$ satisfying $(*)$ fall into one of the following groups:

\begin{itemize}
\item $\Kod(M)=1$.  Minimal non-K\"ahler properly elliptic surfaces.
\item  $\Kod(M)=0$.  Kodaira surfaces.
\item $\Kod(M)=-\infty$.  Class VII surfaces which have either:
\begin{itemize}
\item[$\diamond$] $b_2(M)=0$.  Hopf surfaces or Inoue surfaces by \cite{In, LYZ, T0}.
\item[$\diamond$] $b_2(M)=1$.  These are classified by \cite{Na, T1}.
\item[$\diamond$] $b_2(M)>1$.  Still unclassified.
\end{itemize}
\end{itemize}

Here $\Kod(M)$ is the Kodaira dimension of $M$.
The result of Gill \cite{G} shows that when $\Kod(M)=0$ the Chern-Ricci flow exists for all time and converges to a Chern-Ricci flat metric.

In \cite{TW2}, \emph{explicit examples} of solutions to the Chern-Ricci flow were found on
all $M$ with $\Kod(M)=1$, for all Inoue surfaces and for a large class of Hopf surfaces.  In particular, it was shown that for any $M$ with $\Kod(M)=1$ there exists an explicit solution $\omega(t)$ of the Chern-Ricci flow for $t\in [0,\infty)$ with the property that as $t\rightarrow \infty$ the normalized metrics $\omega(t)/t$ converge in the sense of Gromov-Hausdorff to $(C, d_{\textrm{KE}})$ where $C$ is a Riemann surface and $d_{\textrm{KE}}$ is the distance function induced by an orbifold K\"ahler-Einstein metric on $C$.

The main result of this paper is to show that this collapsing behavior on surfaces of Kodaira dimension one in the examples of \cite{TW2} actually occurs for \emph{every choice} of initial starting Gauduchon metric $\omega_0$.  Combined with Gill's theorem, our results mean that the only remaining case (presumably the most difficult!) under assumption $(*)$ is to understand the behavior of the Chern-Ricci flow on surfaces of negative Kodaira dimension.
We believe that the results of this paper add to the growing body of evidence that the Chern-Ricci flow is a natural geometric evolution equation on complex surfaces, whose behavior reflects the underlying geometry of the manifold.

We first consider the case of elliptic bundles over a Riemann surface.  Later we will see that this is sufficient to understand the behavior of the flow on all $M$ satisfying $(*)$ with $\Kod(M)=1$.
 Suppose that $\pi:M\to S$ is now an elliptic bundle over a compact Riemann surface $S$ of genus at least $2$, with fiber an elliptic curve $E$.
We will denote by $E_y=\pi^{-1}(y)$ the fiber over a point $y\in S$ and by  $\omega_{\mathrm{flat},y}$
the unique flat metric on $E_y$ in the K\"ahler class $[\omega_0|_{E_y}]$.
Let $\omega_S$ be the unique  K\"ahler-Einstein metric on $S$ with $\Ric(\omega_S)=-\omega_S$ and let $\omega_0$ be a Gauduchon metric on $M$.

We consider the \emph{normalized Chern-Ricci flow}
\begin{equation}  \label{NCRF}
\frac{\de}{\de t}\omega=-\Ric(\omega)-\omega, \qquad \omega|_{t=0} =\omega_0,
\end{equation}
starting at $\omega_0$.  With this normalized flow we will see that the volume of the base Riemann surface $S$ remains positive and bounded while the elliptic fibers collapse.
One could equally well study the unnormalized flow (\ref{crf0}) on $M$ (so that our main collapsing result would apply to $\omega(t)/t$ as in \cite{TW2}) but we choose this normalization to stay in keeping with the literature on the K\"ahler-Ricci flow \cite{ST}.  From \cite{TW} we know that a smooth solution to \eqref{NCRF} exists for all time (see Section \ref{section:prel} below for more details).  In this paper we prove the following convergence result as $t \rightarrow \infty$.

\begin{theorem}\label{main}  Let $\pi: M \rightarrow S$ be an elliptic bundle over a Riemann surface $S$ of genus at least 2.
Let $\omega(t)$ be a solution  of the normalized Chern-Ricci flow
\eqref{NCRF} on $M$ starting at a Gauduchon metric $\omega_0$.  Then as $t \rightarrow \infty$,
$$\omega(t)\to \pi^*\omega_{S},$$
exponentially fast in the $C^0(M,g_0)$ topology, where $\omega_S$ is the unique K\"ahler-Einstein metric on $S$.
In particular, the diameter of each elliptic fiber tends to zero uniformly exponentially fast and
 $(M,\omega(t))$ converges to $(S,\omega_S)$ in the Gromov-Hausdorff topology.

Furthermore, with the notation above, $e^{t}\omega(t)|_{E_y}$ converges to the metric
$\omega_{\mathrm{flat},y}$ exponentially fast in the $C^1(E_y, g_0)$ topology, uniformly in $y\in S$.
\end{theorem}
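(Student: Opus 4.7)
My plan is to adapt the Song--Tian paradigm for the K\"ahler-Ricci flow on elliptic fibrations to the Hermitian setting, using the maximum-principle machinery developed in \cite{TW,TW2}. Since $\pi : M \to S$ is an elliptic bundle, $K_M\cong \pi^*K_S$ as holomorphic line bundles, so I can fix a smooth volume form $\Omega$ on $M$ with $-\Ric(\Omega)=\pi^*\omega_S$. Setting $\hat\omega_t:=e^{-t}\omega_0+(1-e^{-t})\pi^*\omega_S$, which satisfies $\p_t\hat\omega_t=\pi^*\omega_S-\hat\omega_t$ and $\hat\omega_0=\omega_0$, and writing $\omega(t)=\hat\omega_t+\ddbar\varphi$, the flow \eqref{NCRF} reduces (up to an inconsequential additive constant) to the parabolic complex Monge-Amp\`ere equation
\[
\dot\varphi \,=\, \log\frac{(\hat\omega_t+\ddbar\varphi)^2}{\Omega}\,-\,\varphi,\qquad \varphi|_{t=0}=0.
\]
Because $(\pi^*\omega_S)^2=0$, the right-hand side degenerates like $-t$, so the natural bounded quantity is $\tilde\varphi:=\varphi+t$, which satisfies $\dot{\tilde\varphi}=\log(e^t\omega^2/\Omega)-\tilde\varphi+1$.

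The first step would be the zeroth-order estimate $\|\tilde\varphi\|_{C^0}+\|\dot\varphi+1\|_{C^0}\le C$ by the parabolic maximum principle: at a spatial maximum of $\tilde\varphi$, $\ddbar\varphi\le 0$ gives $\omega\le\hat\omega_t$, hence $e^t\omega^2/\Omega\le e^t\hat\omega_t^2/\Omega\le C$ since $(\pi^*\omega_S)^2=0$; at a spatial minimum, $\omega\ge\hat\omega_t$ combined with the strict positivity of $\omega_0\wedge\pi^*\omega_S$ (which uses that $\omega_0$ is positive in the fiber direction) forces $e^t\omega^2/\Omega\ge c>0$. Next come the second-order estimates: an upper bound $\mathrm{tr}_\omega\pi^*\omega_S\le C$ via a parabolic Schwarz lemma applied to $\pi:(M,\omega)\to(S,\omega_S)$, and an upper bound of the form $\mathrm{tr}_{\omega_0}(e^t\omega)\le C$ obtained by running a maximum principle on a quantity like $\log\mathrm{tr}_{\omega_0}(e^t\omega)-A\tilde\varphi$. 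In both cases, torsion terms arising from $d\omega\neq 0$ must be absorbed using the Gauduchon condition $\ddb\omega_0=0$ (which is preserved by the flow) and carefully chosen test functions in the spirit of \cite{TW,TW2}. Combined, these bounds yield $e^{-t}\omega_0\lesssim\omega(t)\lesssim \pi^*\omega_S+e^{-t}\omega_0$.

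From here, exponential $C^0$ convergence $\omega(t)\to\pi^*\omega_S$ follows by applying the maximum principle to weighted quantities such as $e^{\delta t}(\dot\varphi+1)$ or $e^{\delta t}(\mathrm{tr}_{\pi^*\omega_S}\omega-1)$ in the base directions, the exponent $\delta>0$ being supplied by the contractive $-\omega$ term in \eqref{NCRF}. For the fiber statement, restricting $e^t\omega(t)$ to a fiber $E_y$ gives a family of Hermitian metrics staying in the class $[\omega_0|_{E_y}]$ modulo the exact form $\ddbar(\varphi|_{E_y})$, with bounded potential and with total area fixed by $e^t\omega^2/\Omega\in[c^{-1},c]$; on a flat torus, standard linear elliptic theory then upgrades this to $C^1$ convergence to $\omega_{\mathrm{flat},y}$ with rates inherited from the bulk estimates, uniformly in $y$, and the diameter and Gromov-Hausdorff claims follow by standard submersion arguments. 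The main obstacle I expect is the second-order estimate and its promotion to a uniform exponential rate: in the non-K\"ahler case the Schwarz lemma and the maximum-principle argument for $\mathrm{tr}_{\omega_0}(e^t\omega)$ do not close as cleanly as for the K\"ahler-Ricci flow because of torsion, and the Gauduchon hypothesis must enter indirectly through the geometry of the auxiliary test functions; then upgrading these bounds to uniform $C^1$ convergence on all fibers requires horizontal regularity of $\varphi$ with explicit rates, which I anticipate to be the most technical part.
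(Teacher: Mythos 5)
Your plan captures the broad Song--Tian shape of the argument, but it would not close as stated, for three reasons that are in fact the core technical content of the paper.

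\textbf{1. Missing the semi-flat modification and the decay estimate for $\varphi$.} You take the reference forms to be $\hat\omega_t=e^{-t}\omega_0+(1-e^{-t})\pi^*\omega_S$ with $\varphi(0)=0$. The paper instead replaces $\omega_0$ by the semi-flat form $\omega_{\mathrm{flat}}=\omega_0+\ddbar\rho$, which restricts to the flat metric $\omega_{\mathrm{flat},y}$ on every fiber, and takes $\tilde\omega=e^{-t}\omega_{\mathrm{flat}}+(1-e^{-t})\omega_S$ with $\varphi(0)=-\rho$. This choice is not cosmetic: it is what allows one to prove that $\Omega=2\omega_{\mathrm{flat}}\wedge\omega_S$ \emph{pointwise} (Lemma~3.2, the only place where the Gauduchon hypothesis enters), which in turn yields $\bigl|e^t\log\bigl(e^t\tilde\omega^2/\Omega\bigr)\bigr|\le C$ and hence the crucial decay estimate $|\varphi|\le C(1+t)e^{-t}$ (Lemma~3.3). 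With $\hat\omega_t$ built from $\omega_0$, the ratio $e^t\hat\omega_t^2/\Omega$ is generically \emph{not} $1+O(e^{-t})$, so the decay of the potential is lost, and with it the only lever that controls the bad torsion terms.

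\textbf{2. The parabolic Schwarz lemma does not close, and the torsion absorption mechanism is different from what you describe.} You propose to bound $\mathrm{tr}_\omega\pi^*\omega_S$ by a direct Schwarz-lemma maximum principle. The authors note explicitly that this argument does \emph{not} close in the non-K\"ahler setting because of troublesome torsion terms in the evolution of $\mathrm{tr}_\omega\omega_S$; the inequality $\omega\ge C^{-1}\omega_S$ is instead deduced a posteriori from the two-sided trace bound. You also suggest that the Gauduchon condition is what absorbs the torsion in the second-order estimate. That is not how the paper works: Gauduchon enters only through Lemma~3.2, and the torsion/curvature of $\tilde g$, which blow up like $e^{t/2}$ (Lemma~4.1), are absorbed by evolving the quantity $Q=\log\mathrm{tr}_{\tilde\omega}\omega-Ae^{t/2}\varphi+\bigl(\tilde C+e^{t/2}\varphi\bigr)^{-1}$. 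The positive terms generated by the Laplacian hitting $e^{t/2}\varphi$ (bounded thanks to the decay of $\varphi$) exactly cancel the $Ce^{t/2}\mathrm{tr}_\omega\tilde\omega$ on the right. Your plan has no mechanism of this kind, and without the decay of $\varphi$ it could not.

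\textbf{3. Fiber $C^1$ convergence.} Reducing to linear elliptic theory on a single torus, you get $C^{1,\alpha}$ control of $\varphi|_{E_y}$ from $C^0$ control, but the resulting bounds on $e^t\omega|_{E_y}$ degrade as $y$ varies and carry no explicit rate. The paper proves a local third-order Calabi-type estimate $|\hat\nabla g|^2_g\le Ce^{2t/3}$ with respect to a local semi-flat product K\"ahler reference metric, after which $|\nabla_{g_E}(e^tg|_{E_y})|^2_{g_E}\le Ce^{-t/3}$ and the uniform exponential $C^1$ statement follows. Note also that the exponential rate in the $C^0$ convergence $\omega(t)\to\pi^*\omega_S$ requires not only a trace bound but a pinching $(1-Ce^{-\ve t})\tilde\omega\le\omega\le(1+Ce^{-\ve t})\tilde\omega$, and proving this needs an exponential decay bound for $\dot\varphi$, which in turn rests on the Chern scalar curvature bound $-C\le R\le Ce^{t/2}$ established via the Cheng--Yau gradient trick. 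None of these auxiliary estimates appear in your outline, and the sub-exponential nature of the curvature bounds ($e^{t/2}$ rather than a constant) is precisely the obstruction that makes the carefully tuned exponents $\ve,\delta,\eta,\sigma$ in the paper's exponential pinching argument necessary.
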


Note that in Theorem \ref{main} we do not need to assume that $M$ is non-K\"ahler.  On the other hand, we do assume that $M$ is an elliptic bundle, so that the fibers are all isomorphic as elliptic curves.  General elliptic surfaces may have singular fibers and in such cases,  the complex structure of the smooth  fibers may vary.   However, we will see shortly that this does not arise for the non-K\"ahler surfaces that are of interest to us.

  In the case that $M$ is K\"ahler \emph{and} $\omega_0$ is K\"ahler, then $\omega(t)$ is a solution of  the normalized K\"ahler-Ricci flow.  There are already a number of results on this, which we now briefly discuss.
On a general minimal K\"ahler elliptic surface, and its higher dimensional analogue, the K\"ahler-Ricci flow was first investigated by Song-Tian \cite{ST, ST2}.  They showed that the flow converges at the level of potentials to a \emph{generalized K\"ahler-Einstein metric} on the base Riemann surface.  The generalized K\"ahler-Einstein equation involves the Weil-Petersson metric and singular currents.  These terms arise because, unlike in our case, the fibration structure on a K\"ahler elliptic surface is not in general locally trivial and may have singular fibers.  When the K\"ahler surface is a genuine elliptic bundle over a Riemann surface of genus larger than one, the results of Song-Tian give $C^0$ collapsing of the fibers along the K\"ahler-Ricci flow, as well as a uniform scalar curvature bound \cite{ST3}.  These convergence results were strengthened  by Song-Weinkove \cite{SW} and Gill \cite{G2} in the special case of a product $E \times S$, giving $C^{\infty}$ convergence of the metrics to the pull-back of a K\"ahler-Einstein metric on the base.  Fong-Zhang \cite{FZ}, adapting a technique of Gross-Tosatti-Zhang \cite{GTZ} on Calabi-Yau degenerations, established smooth convergence for the K\"ahler-Ricci flow on more general elliptic bundles.  In particular, the statement of Theorem \ref{main} is known if the initial metric $\omega_0$ is K\"ahler (with the exception of the assertion that the convergence $\omega(t) \rightarrow \pi^* \omega_S$ is exponential - as far as we know, this result is new even in the K\"ahler-Ricci flow case).

Of course, we are much more interested in manifolds which do \emph{not} admit K\"ahler metrics.  For non-K\"ahler elliptic surfaces, we make use of the following {\bf key fact}:

\bigskip
\emph{Every minimal non-K\"ahler properly elliptic surface is an elliptic bundle or has a finite cover which is an elliptic bundle.}
 \bigskip

 This  is well-known from the Kodaira classification (see for example \cite[Lemmas 1, 2]{Br2} or \cite[Theorem 7.4]{Wa}).  Then an immediate consequence of our Theorem \ref{main} is that we can identify the Gromov-Hausdorff behavior of the Chern-Ricci flow on all minimal non-K\"ahler surfaces of Kodaira dimension one.

\begin{corollary}\label{cor} Let  $\pi:M\to S$ be any minimal non-K\"ahler properly elliptic surface and let $\omega(t)$ be the solution of the normalized Chern-Ricci flow (\ref{NCRF}) starting at a Gauduchon metric $\omega_0$. 
Then $(M,\omega(t))$ converges to $(S,d_S)$ in the Gromov-Hausdorff topology.

Here $d_S$ is the distance function induced by an orbifold K\"ahler-Einstein metric $\omega_S$ on $S$, whose set $Z$ of orbifold points is precisely the image of the multiple fibers of $\pi$. Furthermore, $\omega(t)$ converges to $\pi^*\omega_S$ in the $C^0(M,g_0)$ topology, and for any $y\in S\backslash Z$ the metrics $e^{t}\omega(t)|_{E_y}$
converge exponentially fast in the $C^1(E_y, g_0)$ topology (and uniformly as $y$ varies in a compact set of $S\backslash Z$) to the flat K\"ahler metric on $E_y$ cohomologous to $[\omega_0|_{E_y}]$.
\end{corollary}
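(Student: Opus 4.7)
The plan is to reduce Corollary \ref{cor} to Theorem \ref{main} by passing to a finite unramified cover of $M$ on which the elliptic fibration becomes an honest elliptic bundle over a higher genus Riemann surface. Specifically, the classification of non-K\"ahler minimal properly elliptic surfaces \cite{bhpv} provides a finite Galois cover $f:\tilde{M}\to M$ with deck group $G$, an elliptic bundle $\tilde\pi:\tilde{M}\to\tilde{S}$ over a compact Riemann surface $\tilde{S}$ of genus at least $2$, and a $G$-equivariant branched cover $q:\tilde{S}\to S$ with $\pi\circ f=q\circ\tilde\pi$, whose branch locus is exactly $Z$ with ramification indices matching the multiplicities of the multiple fibers. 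In particular $f$ is unramified and hence a local biholomorphism.

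First I would set $\tilde\omega_0=f^*\omega_0$, a $G$-invariant Gauduchon metric on $\tilde{M}$. Since the Chern-Ricci flow is natural under holomorphic pullback, $\tilde\omega(t):=f^*\omega(t)$ is a $G$-invariant solution of \eqref{NCRF} starting at $\tilde\omega_0$. Applying Theorem \ref{main} on $\tilde{M}$ yields $\tilde\omega(t)\to\tilde\pi^*\omega_{\tilde{S}}$ exponentially in $C^0(\tilde{M},\tilde{g}_0)$, where $\omega_{\tilde{S}}$ is the K\"ahler-Einstein metric on $\tilde{S}$ with $\Ric(\omega_{\tilde{S}})=-\omega_{\tilde{S}}$. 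Uniqueness of this metric together with the $G$-invariance of the flow (and hence of the limit) forces $\omega_{\tilde{S}}$ to be $G$-invariant, so it descends to an orbifold K\"ahler-Einstein metric $\omega_S$ on $S=\tilde{S}/G$ whose orbifold locus is precisely $Z$, and $\tilde\pi^*\omega_{\tilde{S}}=f^*\pi^*\omega_S$. Because $f$ is a local biholomorphism with $\tilde{g}_0=f^*g_0$, pointwise $\tilde{g}_0$-norms agree with pointwise $g_0$-norms under $f$, so the exponential $C^0$ convergence descends to $\omega(t)\to\pi^*\omega_S$ in $C^0(M,g_0)$. The Gromov-Hausdorff convergence $(M,\omega(t))\to(S,d_S)$ then follows by taking the $G$-quotient of the equivariant Gromov-Hausdorff convergence on $\tilde{M}$, where $G$ acts by isometries on both sides of the limit.

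For the fiber statement, given $y\in S\setminus Z$ and any $\tilde{y}\in q^{-1}(y)$, the map $q$ is unramified at $\tilde{y}$ and $f$ restricts to a biholomorphism from the fiber $E_{\tilde{y}}\subset\tilde{M}$ onto $E_y\subset M$. The exponential $C^1(E_{\tilde{y}},\tilde{g}_0)$ convergence $e^t\tilde\omega(t)|_{E_{\tilde{y}}}\to\omega_{\mathrm{flat},\tilde{y}}$ from Theorem \ref{main} transfers via this biholomorphism to the desired $C^1(E_y,g_0)$ convergence, with the flat limit lying in $[\omega_0|_{E_y}]$; uniformity for $y$ in a compact subset $K\subset S\setminus Z$ follows from the corresponding uniformity over the compact preimage $q^{-1}(K)\subset\tilde{S}\setminus\mathrm{Fix}(G)$. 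I expect the main obstacle to be the bookkeeping at the classification step --- confirming the existence of a cover with exactly the required ramification profile, that $f$ is genuinely unramified, and that the descended orbifold K\"ahler-Einstein metric has orbifold locus exactly $Z$; once these structural facts are in hand, all the analytic content is supplied by Theorem \ref{main}.
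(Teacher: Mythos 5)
Your proposal follows essentially the same route as the paper's proof: pass to the finite unramified cover $p:M'\to M$ (which the paper obtains from \cite{Br2} or \cite{Wa}) on which the fibration becomes an honest elliptic bundle over a genus-at-least-two base $S'$, pull back the metric and the flow, apply Theorem \ref{main} on $M'$, and descend the conclusions via $\Gamma$-equivariance. The only place where your argument and the paper's diverge in emphasis is the justification that $\pi^*\omega_S$ is a smooth $(1,1)$-form on $M$ (equivalently, that $\pi'^*\omega_{S'}$ is $\Gamma$-invariant and hence descends): you derive this abstractly from the uniqueness and hence $\Gamma$-invariance of the K\"ahler--Einstein metric $\omega_{S'}$, whereas the paper additionally verifies this concretely by exhibiting $M$ and $M'$ as quotients of $H\times\mathbb{C}^*$ and writing down the pullback form explicitly. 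Both are valid; the abstract argument is cleaner, while the paper's explicit description removes any possible doubt about the smoothness of the pullback across the multiple fibers. For the Gromov--Hausdorff step, make sure to cite a theorem on equivariant Gromov--Hausdorff convergence and its compatibility with quotients (the paper uses \cite[Theorem 2.1]{Fuk} or \cite[Lemma 1.5.4]{Ro}); you gesture at this but a specific reference is needed to close the argument.
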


As mentioned above, explicit examples exhibiting the behavior of Corollary \ref{cor} were given in \cite{TW2}.

We now outline the steps we need to establish Theorem \ref{main}, and point out some of the difficulties that arise from the non-K\"ahlerity of the metrics.

The first parts of the proof follow quite closely the arguments used by Song-Tian \cite{ST} for the K\"ahler-Ricci flow.
In Section \ref{section:prel}, we show that the Chern-Ricci flow can be written as parabolic complex Monge-Amp\`ere equation
\begin{equation} \label{pcma}
\frac{\de}{\de t}\vp=\log\frac{e^t(\tilde{\omega}+\ddbar\vp)^2}{\Omega}-\vp,\  \tilde{\omega}+\ddbar\vp>0,\  \vp(0)=-\rho,
\end{equation}
where $\tilde{\omega}=\tilde{\omega}(t)$ is a family of reference forms (which are metrics for $t$ large) given by $$\tilde{\omega}=e^{-t}\of + (1-e^{-t}) \omega_S, \quad \textrm{with } \of = \omega_0 + \ddbar \rho,$$
where $\rho$ is chosen so that $\of$ restricted to the fiber $E_y$ is exactly the metric $\omega_{\textrm{flat}, y}$ discussed above.  Here $\Omega$ is a particular fixed volume form on $M$ with the property that $\ddbar \log \Omega = \omega_S$.  If $\varphi$ satisfies (\ref{pcma}) then $\omega(t) = \tilde{\omega}+\ddbar \varphi$ satisfies the Chern-Ricci flow (\ref{NCRF}).

In Section 3 we establish  uniform bounds for $\varphi$ and $\dot{\varphi}$, which imply that the volume form of the evolving metric $\omega(t)$ is uniformly equivalent to the volume form of the reference metric.  These follow in the same way as in the case of the K\"ahler-Ricci flow \cite{ST}.  In addition, we prove a crucial decay estimate for $\varphi$,
\begin{equation} \label{eqndecay}
| \varphi | \le C(1+t)e^{-t},
\end{equation}
using the argument of \cite{SW}.  This estimate makes use of the Gauduchon assumption on $\omega_0$, and in fact is the only place where we use this condition.

So far, the torsion terms of $\omega(t)$ and $\tilde{\omega}$  have not
entered the picture.  They show up in the next step of obtaining uniform bounds for the metrics $\omega(t)$.  The evolution equation for $\tr{\tilde{\omega}}{\omega}$, essentially already computed in \cite{TW}, contains terms involving the torsion and curvature of the reference metrics $\tilde{\omega}$.  In Section \ref{sectorsion} we prove a technical lemma giving bounds for the torsion and curvature of these metrics.  In particular we show:
$$| \tilde{T}|_{\tilde{g}} \le C, \quad | \db \tilde{T}|_{\tilde{g}} + | \ti{\nabla} \tilde{T}|_{\tilde{g}} +  | \widetilde{\textrm{Rm}} |_{\tilde{g}}  \le C e^{t/2}.$$
To deal with these bounds of order $e^{t/2}$, our idea is to exploit the strong decay estimate (\ref{eqndecay}) on $\varphi$ to control these terms.

 In Section \ref{sectiontrace}, we evolve the quantity
 $$Q=\log \tr{\tilde{\omega}}{\omega} - Ae^{t/2} \varphi + \frac{1}{\tilde{C}+e^{t/2} \varphi},$$
 noting that $e^{t/2}\varphi$ is bounded by (\ref{eqndecay}).  The third term of $Q$ is the ``Phong-Sturm term'' \cite{PS}, which was used in \cite{TW} to control some torsion terms along the Chern-Ricci flow.  Using the good positive terms arising from the Laplacian landing on $e^{t/2}\varphi$ we can control the bad terms of order $e^{t/2}$ coming from the torsion and curvature of $\tilde{\omega}$.
  We obtain a uniform bound on $Q$ which gives the estimate
 \begin{equation} \label{ooequiv}
 C^{-1} \tilde{\omega} \le \omega \le C \tilde{\omega},
 \end{equation}
 namely, that the solution $\omega$ is uniformly equivalent to the reference metric $\tilde{\omega}$.

 We point out that our argument here differs substantially from that of Song-Tian \cite{ST} where they prove first a parabolic Schwarz Lemma, namely an estimate of the type $\omega \ge C^{-1} \omega_S$ for a uniform $C>0$.  We were unable to prove this by a similar direct maximum principle argument, because of troublesome torsion terms arising in the evolution of $\tr{\omega}{\omega_S}$.  However, we still obtain the estimate $\omega \ge C^{-1} \omega_S$ once we have (\ref{ooequiv}).

 The next step is to improve the bound (\ref{ooequiv}) to the stronger exponential convergence result
 \begin{equation} \label{expbdcrf}
 (1- Ce^{-\ve t}) \tilde{\omega} \le \omega \le (1+Ce^{-\ve t}) \tilde{\omega},
\end{equation}
for $\ve>0$. To our knowledge, this estimate is new even for the K\"ahler-Ricci flow on elliptic bundles.  The idea is to evolve the quantity
$$Q = e^{\ve t} (\textrm{tr}_{\omega}{\tilde{\omega}} - 2) - e^{\delta t} \varphi,$$
for a carefully chosen $\delta >1/2+2\ve$   and again exploit the decay estimate (\ref{eqndecay}).  Showing that $Q$ is bounded from above then gives the estimate
$$\tr{\omega}{\tilde{\omega}} -2\le C e^{-\ve t},$$
 and a similar argument gives the same inequality with $\tr{\omega}{\tilde{\omega}}$ replaced by $\tr{\tilde{\omega}}{\omega}$.  Combining these two estimates gives (\ref{expbdcrf}).

However, in order to apply the maximum principle to $Q$ we first require an exponential decay estimate for $\dot{\varphi}$.
To prove this, we observe that the evolution equation for $\dot{\varphi}$ is
$$\ddt{} \dot{\varphi} = - R-1-\dot{\varphi},$$
where $R$ is the Chern scalar curvature of $g$.  If we had a uniform bound for the Chern scalar curvature, an exponential decay estimate for $\dot{\varphi}$ would follow from this evolution equation and the decay estimate for $\varphi$.  However, we are only able to prove the weaker estimate
\begin{equation} \label{introscalar}
-C \le R \le Ce^{t/2}.
\end{equation}
Nevertheless, this suffices since the coefficient of $t$ in the exponent is strictly less than 1.
The bound (\ref{introscalar}) is the content of Section \ref{sectionscalar}.  The factor $e^{t/2}$ arises from the bounds on the curvature and torsion of the reference metrics we obtained in Section \ref{sectorsion}.

The idea for bounding the Chern scalar curvature from above (the lower bound is easy) is to consider the quantity $u=\varphi + \dot{\varphi}$ and bound from above $-\Delta u = R +\tr{\omega}{\omega_S} \ge R$.  Using an idea that goes back to Cheng-Yau \cite{CY}, and is used in the context of the K\"ahler-Ricci flow on Fano manifolds by Perelman (see Sesum-Tian \cite{SeT}) and on elliptic surfaces by Song-Tian \cite{ST}, we first bound the gradient of $u$ by considering the quantity $|\nabla u|^2_g/(A-u)$ for a fixed large $A$.  We then evolve the quantity $-\Delta u + 6 | \nabla u|^2_g$, which is almost enough to obtain the estimate we need.  There are some bad terms which we can control by adding large multiples of $\tr{\omega}{\omega_S}$ and $\tr{\tilde{\omega}}{\omega}$.  We already know that these terms are bounded from (\ref{ooequiv}).  Putting this together gives the upper bound on scalar curvature and the exponential decay estimate for $\dot{\varphi}$ that we require.

Now that we have this exponential decay estimate on $\dot{\varphi}$, we carry out
in Section \ref{sectionexp} the argument mentioned above for the exponential convergence of the metrics  (\ref{expbdcrf}).

In Section \ref{sectioncalabi}, we prove a local Calabi type estimate
\begin{equation} \label{localcalabiestimate}
| \hat{\nabla} g|^2_g \le Ce^{2t/3},
\end{equation}
where $\hat{\nabla}$ is the connection associated to a local semi-flat product K\"ahler metric defined in a neighborhood $U$.  Note that if we had the better  estimate  $| \hat{\nabla} g|^2_g \le C$ (as in  \cite{SW, G2, FZ} for example) then we could immediately conclude  the global convergence of the metrics $\omega(t)$ to $\pi^* \omega_S$ from the estimates (\ref{eqndecay}) and (\ref{ooequiv}) and the Ascoli-Arzel\`a Theorem.  We do not know whether this stronger estimate $| \hat{\nabla} g|^2_g \le C$  holds or not.

To establish (\ref{localcalabiestimate}), we use some arguments and calculations similar to the local Calabi estimate in  \cite{ShW}.  However, a key difference here is that the metrics are collapsing in the fiber directions and we need to take account of the error terms that arise in this way. The local Calabi estimate is then used to establish the last part of Theorem \ref{main} that $e^{t}\omega(t)|_{E_y}$ converges to
$\omega_{\mathrm{flat},y}$ exponentially fast in the $C^1(E_y, g_0)$ topology, uniformly in $y\in S$.

In Section \ref{sectionproofs} we complete the proofs of Theorem \ref{main} (this essentially follows immediately) and Corollary \ref{cor}.

\section{Preliminaries}\label{section:prel}

\subsection{Hermitian geometry and notation}

We begin with a brief recap of  Hermitian geometry and the Chern connection (for more details see for example \cite{TW}).

Given a Hermitian metric $g$, we write $\omega = \sqrt{-1} g_{i\ov{j}} dz^i \wedge d\ov{z}^j$ for its associated $(1,1)$ form, which we will also refer to as a metric.  Write
 $\nabla$ for its Chern connection, with respect to which $g$ and the complex structure are covariantly constant.  The Christoffel symbols of $\nabla$ are given by $\Gamma_{ij}^k=g^{k\ov{q}}\partial_i g_{j\ov{q}}$.  For example, if $X=X^i\partial_i$ is a vector field then its covariant derivative has components $\nabla_i X^{\ell} = \partial_i X^{\ell} + \Gamma^{\ell}_{i j} X^j$.

The torsion tensor of $g$ has components $T^k_{ij} = \Gamma^k_{ij} - \Gamma^k_{ji}$.  We will often lower an index using the metric $g$, writing $$T_{ij\ov{\ell}} = g_{k\ov{\ell}} T^k_{ij} = \partial_i g_{j\ov{\ell}} - \partial_j g_{i\ov{\ell}}.$$
Note that $T_{ij\ov{\ell}} = T'_{ij\ov{\ell}}$ if $g$ and $g'$ are Hermitian metrics whose $(1,1)$ forms $\omega$ and $\omega'$ differ by a closed form.
 The Chern curvature of $g$ is defined to be $R_{k\ov{\ell} i}^{\ \ \ \, p} = -\partial_{\ov{\ell}} \Gamma^p_{ki}$, and we will raise and lower indices using the metric $g$.  We have the usual commutation formulae involving the curvature, such as
$$[\nabla_k, \nabla_{\ov{\ell}}]X^i = R_{k\ov{\ell}j}^{\ \ \ \, i} X^j.$$
Define the Chern-Ricci curvature of $g$ to be $R_{k\ov{\ell}} = g^{i\ov{j}} R_{k\ov{\ell}i\ov{j}} = -\partial_k \partial_{\ov{\ell}} \log \det g$, and we write
$$\textrm{Ric}(\omega)=\sqrt{-1} R_{k\ov{\ell}} dz^k \wedge d\ov{z}^{\ell}$$
for the associated Chern-Ricci form, a real closed (1,1) form. Write $R=g^{k\ov{\ell}} R_{k\ov{\ell}}$ for the Chern scalar curvature.

We write $\Delta$ for the complex Laplacian of $g$, which acts on a function $f$ by $\Delta f=g^{i\ov{j}} \partial_i \partial_{\ov{j}}f$.   For functions $f_1, f_2$, we define $\langle \nabla f_1, \nabla f_2 \rangle_g = g^{i\ov{j}} \partial_if_1 \partial_{\ov{j}} f_2$ and $|\nabla f|^2_g = \langle \nabla f, \nabla f\rangle_g$.   If $\alpha= \sqrt{-1} \alpha_{i\ov{j}}dz^i \wedge d\ov{z}^j$ is a real $(1,1)$ form and $\omega$ a Hermitian metric we write $\tr{\omega}{\alpha}$ for $g^{i\ov{j}} \alpha_{i\ov{j}}$.

A final remark about notation:  we will write $C, C', C_0, \ldots$ etc. for a uniform constant, which may differ from line to line.

\subsection{Elliptic bundles  and semi-flat metrics}
We now specialize to the setting of Theorem \ref{main}.  Let $\pi:M\to S$ be an elliptic bundle over a compact Riemann surface $S$ of genus at least $2$, with fiber an elliptic curve $E$.
Clearly $\pi:M\to S$ is relatively minimal, because there is no $(-1)$-curve contained in any fiber.
We will denote by $E_y=\pi^{-1}(y)$ the fiber over a point $y\in S$.
Let $\omega_S$ be the unique  K\"ahler metric on $S$ with $\Ric(\omega_S)=-\omega_S$, let $\omega_0$ be a Gauduchon metric on $M$.

Since each fiber $E_y=\pi^{-1}(y)$ is a torus, we can find a function $\rho_y$ on $E_y$ with
$$\omega_0|_{E_y}+\ddbar\rho_y = \omega_{\mathrm{flat},y},$$
the unique flat metric on $E_y$ in the K\"ahler class $[\omega_0|_{E_y}]$.
Furthermore we can normalize the functions $\rho_y$ by $\int_{E_y}\rho_y \omega_0=0$,
so that they vary smoothly in $y$ (in general this follows from Yau's estimates \cite{Y1},
although in this simple case it can also be proved directly, see also \cite[Lemma 2.1]{Fi}), and they define a smooth function $\rho$ on $M$.
We then let
\begin{equation} \label{defnof}
\omega_{\mathrm{flat}}=\omega_0+\ddbar\rho.
\end{equation}
$\omega_{\mathrm{flat}}$ is a semi-flat form, in the sense that it restricts to a flat metric on each fiber $E_y$,
but in general it is not positive definite on $M$. But note that $\omega_{\mathrm{flat}}\wedge\pi^*\omega_S$ is a strictly positive
smooth volume form on $M$.

\subsection{The canonical bundle and long time existence for the flow}
In the same setting as above, we claim that $K_M=\pi^* K_S$. To see this, start from Kodaira's canonical bundle formula for relatively minimal elliptic surfaces
without singular fibers \cite[Theorem V.12.1]{bhpv}
$$K_M=\pi^*(K_S\otimes L),$$
where $L$ is the dual of $R^1\pi_*\mathcal{O}_M$. But since $M$ is an elliptic bundle, it follows that
the line bundle $R^1\pi_*\mathcal{O}_M$ is trivial (see e.g. \cite[Proposition 2.1]{Br1}), and the claim follows.

Therefore $c_1(M)=\pi^* c_1(S)$ (an alternative more direct proof of this fact is contained in Lemma \ref{good}), and so there exists a unique volume form $\Omega$ with
\begin{equation} \label{Omdefn}
\Ric(\Omega)=-\omega_S \quad \textrm{and} \quad \int_M \Omega= 2\int_M \omega_0\wedge \omega_S.
\end{equation}
Here and henceforth, we are abbreviating $\pi^*\omega_S$ by $\omega_S$, and for any smooth positive volume form $\Omega$ we write
$\Ric(\Omega)$ for the globally defined real $(1,1)$-form given locally by $-\ddbar \log \Omega$.

It follows that the Bott-Chern class of the canonical bundle $K_M$, which equals $c_1^{\mathrm{BC}}(K_M)=-c_1^{\mathrm{BC}}(M),$ is {\em nef}. In general this means that given any $\ve>0$ there
exists a real smooth function $f_\ve$ on $M$ such that $-\Ric(\omega_0)+\ddbar f_\ve > -\ve\omega_0$. Equivalently, this can be phrased by saying that
for any $\ve>0$ there is a smooth Hermitian metric $h_\ve$ on the fibers of $K_M$ with curvature form bigger than $-\ve\omega_0.$ The maximal existence theorem for the Chern-Ricci flow
\cite[Theorem 1.2]{TW} has  the following immediate corollary:

\begin{theorem}
Let $(M,\omega_0)$ be any compact Hermitian manifold. Then the Chern-Ricci flow
\begin{equation}  \label{UCRF}
\frac{\de}{\de t}\omega=-\Ric(\omega), \qquad \omega|_{t=0} =\omega_0,
\end{equation}
has a smooth solution defined for all $t\geq 0$ if and only if the first Bott-Chern class $c_1^{\mathrm{BC}}(K_M)$ is nef.
The exact same statement holds for the normalized Chern-Ricci flow \eqref{NCRF}.
\end{theorem}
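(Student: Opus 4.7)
The plan is to read this off of Theorem 1.2 of \cite{TW}, which identifies the maximal existence time of the unnormalized Chern-Ricci flow \eqref{UCRF} as
\[
T_{\max} = \sup\{\, t > 0 : \omega_0 - t\,\Ric(\omega_0) + \ddbar \psi > 0 \text{ for some } \psi \in C^\infty(M) \,\}.
\]
Since $\Ric(\omega_0)$ represents $c_1^{\mathrm{BC}}(M) = -c_1^{\mathrm{BC}}(K_M)$, the condition $T_{\max} = \infty$ is precisely the statement that for every $t \geq 0$ the Bott-Chern class $[\omega_0] + t\, c_1^{\mathrm{BC}}(K_M)$ admits a strictly positive smooth representative.

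For the reverse implication, assume $c_1^{\mathrm{BC}}(K_M)$ is nef. Given any $t > 0$, choose $\ve < 1/t$ and pick $f_\ve \in C^\infty(M)$ with $-\Ric(\omega_0) + \ddbar f_\ve > -\ve\, \omega_0$. Then
\[
\omega_0 - t\,\Ric(\omega_0) + \ddbar(t f_\ve) > (1 - t\ve)\,\omega_0 > 0,
\]
so $t < T_{\max}$. Since $t$ was arbitrary, $T_{\max} = \infty$. Conversely, if $T_{\max} = \infty$, then for every $\ve > 0$ there exists $\psi_\ve \in C^\infty(M)$ with $\omega_0 - (1/\ve)\,\Ric(\omega_0) + \ddbar \psi_\ve > 0$; rescaling and setting $f_\ve = \ve \psi_\ve$ yields $-\Ric(\omega_0) + \ddbar f_\ve > -\ve\, \omega_0$, so $c_1^{\mathrm{BC}}(K_M)$ is nef.

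For the normalized flow \eqref{NCRF}, observe that if $\tilde{\omega}(s)$ solves \eqref{UCRF}, then $\omega(t) := e^{-t} \tilde{\omega}(e^t - 1)$ solves \eqref{NCRF}, because the Chern-Ricci form is invariant under constant rescalings of the metric. Since the time change $s = e^t - 1$ is a diffeomorphism of $[0,\infty)$ onto itself, the unnormalized flow exists for all time if and only if the normalized flow does, and the same characterization applies. There is essentially no obstacle: the entire content sits in Theorem 1.2 of \cite{TW}, and the corollary merely repackages it through the definition of a nef class in Bott-Chern cohomology.
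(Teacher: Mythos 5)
Your proof is correct and takes essentially the same approach as the paper: both directions reduce to \cite[Theorem 1.2]{TW}, with the ``only if'' direction coming from rescaling a positive representative of $[\omega_0] - t\,c_1^{\mathrm{BC}}(M)$ by $1/t$, and the ``if'' direction from rescaling the nefness inequality by $t$. The one cosmetic difference is that you make the unnormalized-to-normalized time change $\omega(t)=e^{-t}\tilde\omega(e^t-1)$ explicit, whereas the paper simply invokes the standard space-time scaling of Hamilton \cite{Ha}; your explicit form is correct, and since $s\mapsto e^s-1$ is a diffeomorphism of $[0,\infty)$ the equivalence of long-time existence follows as you state.
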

Since this theorem was not stated explicitly in \cite{TW}, we provide the simple proof.
\begin{proof}
An elementary space-time scaling argument \cite{Ha} allows one to transform a solution of \eqref{NCRF} into a solution of \eqref{UCRF} and vice versa,
and one exists for all positive time if and only if the other one does, so it is enough to consider \eqref{UCRF}.

In this case, we know from \cite{TW} that as long as a solution $\omega(t)$ exists, it is of the form
$$\omega(t)=\omega_0-t\Ric(\omega_0)+\ddbar \vp(t),$$
and therefore
$$-\Ric(\omega_0)+\ddbar \left(\frac{\vp}{t}\right) > -\frac{1}{t}\omega_0,$$
so if the solution exists for all $t\geq 0$, then we see that $c_1^{\mathrm{BC}}(K_M)$ is nef.

Conversely, if $c_1^{\mathrm{BC}}(K_M)$ is nef, then for every given $t>0$ we can find a smooth function $f_t$ with
$$-\Ric(\omega_0)+\ddbar f_t > -\frac{1}{t}\omega_0,$$
which is equivalent to
$$\omega_0-t\Ric(\omega_0)+\ddbar (tf_t)>0,$$
and so the flow exists at least on $[0,t)$ by \cite[Theorem 1.2]{TW}.
\end{proof}

Applying this to the setting of Theorem \ref{main}, we obtain a smooth solution $\omega(t)$ to the normalized Chern-Ricci flow (\ref{NCRF}) for $t \in [0,\infty)$.

\subsection{The  parabolic complex Monge-Amp\`ere equation}
From now on, until we get to Section \ref{sectionproofs}, we assume we are in the setting of Theorem \ref{main}.
We will rewrite the normalized Chern-Ricci flow \eqref{NCRF} as a parabolic complex Monge-Amp\`ere equation.  Define reference $(1,1)$-forms $\tilde{\omega}=\tilde{\omega}(t)$ by
$$\ti{\omega}=e^{-t}\omega_{\mathrm{flat}}+(1-e^{-t})\omega_S,$$
where we recall that $\of$ is defined by (\ref{defnof}).
Note that $\tilde{\omega}$ may not necessarily be positive definite for all $t$, but there exists a time $T_I$ such that $\tilde{\omega}>0$ for all $t \ge T_I$.  (On the other hand, observe that
$\ti{\omega}-e^{-t}\ddbar\rho$ is positive definite for all $t\geq 0$).    We fix this constant $T_I$ now once and for all. By the long time existence result of \cite{TW}, we have uniform $C^{\infty}$ estimates on $\omega(t)$  for $t \in [0, T_I]$.  Our goal is to obtain estimates on $\omega(t)$  for $t > T_I$ which are independent of $t$.

Define a function $\vp(t)$ by
$$\frac{\de}{\de t}\vp=\log\frac{e^t\omega(t)^2}{\Omega}-\vp,\quad \vp(0)= -\rho,$$
where we recall that $\omega_{\mathrm{flat}}=\omega_0+\ddbar\rho$ and $\Omega$ is given by (\ref{Omdefn}).
We claim that $\omega(t)=\tilde{\omega}+\ddbar\vp(t)$ holds. Indeed,
$$\frac{\de}{\de t}\tilde{\omega}=\omega_S-\tilde{\omega}=-\Ric(\Omega)-\tilde{\omega},$$
and so,
$$\frac{\de}{\de t}(e^t(\omega-\tilde{\omega}-\ddbar\vp))=0,\quad (e^t(\omega-\tilde{\omega}-\ddbar\vp))|_{t=0}=0,$$
which implies that indeed $\omega=\tilde{\omega}+\ddbar\vp.$ Therefore $\vp$ also satisfies the PDE
\begin{equation}\label{CRF}
\frac{\de}{\de t}\vp=\log\frac{e^t(\tilde{\omega}+\ddbar\vp)^2}{\Omega}-\vp,\  \tilde{\omega}+\ddbar\vp>0,\  \vp(0)=-\rho,
\end{equation}
and conversely every solution of \eqref{CRF} gives rise to a solution $\omega=\tilde{\omega}+\ddbar\vp$ of the normalized Chern-Ricci flow \eqref{NCRF}.

\section{Estimates on the potential and its time derivative}

We now begin the proof of Theorem \ref{main}. We assume $\pi: M \rightarrow S$ is an elliptic bundle over a Riemann surface $S$ of genus at least $2$ and $\omega(t)$ is a solution of the normalized Chern-Ricci flow (\ref{NCRF}) on $M$ starting at a Gauduchon metric $\omega_0$.

In this section we collect some estimates on the potential function $\varphi$ solving  \eqref{CRF}, and its time derivative $\dot{\varphi}:= \partial \varphi/\partial t$.  
The proofs of these results are almost identical to the corresponding results for the K\"ahler-Ricci flow \cite{ST} (see also \cite{SW}).  For the reader's convenience we include here the brief arguments. We also point out the one place where we use the Gauduchon condition.

\begin{lemma}\label{lowerbound} There exists a uniform positive constant $C$ such
that on $M$,
\begin{enumerate}
\item[(i)]  $\displaystyle{|\varphi(t)|\leq C}$ for all $t \ge 0$.
\item[(ii)]  $\displaystyle{|\dot \varphi(t)|\leq C}$ for all $t \ge 0$.
\item[(iii)] $\displaystyle{\frac{1}{C}\tilde \omega^2\leq \omega^2\leq C\tilde\omega^2}$ for all $t \ge T_I$.
\end{enumerate}

\bproof We follow the exposition in \cite{SW}. Since $e^t \tilde
\omega^2=e^{-t}\of^2+2(1-e^{-t})\of\wedge \omega_S$, we
have for $ t\ge T_I$,
\begin{equation} \label{volformbds}
\frac{1}{C}\Om\leq e^t \tilde \omega^2\leq C\Om.
\end{equation}
If $\varphi$ attains a minimum at a point $(x_0,t_0)$ with $t_0>T_I$, then
at that point
$$0\geq \ddt \varphi= \log\frac{ e^t(\tilde\omega+\ddbar \varphi)^2}{\Om}-\varphi\geq\log\frac{ e^t\tilde\omega^2}{\Om}-\varphi\geq -\log C-\varphi,$$
giving $\varphi\geq -\log C$ and hence a uniform lower bound for $\varphi$. The upper bound for $\varphi$ is similar. This gives (i).

For (ii),  we first compute:
\begin{equation} \label{eveqndotphi}
\left(\ddt-\Delta\right)\dot\varphi= \tr{\omega}{(\omega_S-\tilde\omega)}+1-\dot\varphi.
\end{equation}
On the other hand, there exists a uniform constant
$C_0>1$ such that $C_0\tilde\omega>\omega_S$ for $t \ge T_I$.
We apply the maximum principle
to $Q_1=\dot\varphi-(C_0-1)\varphi$.  Calculate for $t \ge T_I$,
\begin{align*}
\left(\ddt -\Delta\right)Q_1={} & \tr{\omega}{(\omega_S-\tilde\omega)}+1-C_0\dot\varphi+(C_0-1)\tr{\omega}{(\omega-\tilde\omega)}\\
\leq {} & 1-C_0\dot\varphi+2(C_0-1),
\end{align*}
and the maximum principle shows that $Q_1$ is bounded from above uniformly.  This gives the upper bound for $\dot{\varphi}$.

Next consider
 $Q_2=\dot\varphi+2\varphi$ and compute
$$\left(\ddt -\Delta\right)Q_2=\tr{\omega}{(\omega_S-\tilde\omega)}+1+\dot\varphi-2\tr{\omega}{(\omega-\tilde\omega)} \geq \tr{\omega}{\tilde\omega}+\dot\varphi-3.$$
By  the geometric-arithmetic means inequality, we have for $t\geq T_I$,
\begin{equation}
e^{-\frac{\dot\varphi+\varphi}{2}}=\left(\frac{\Om}{e^t\omega^2}\right)^{\frac{1}{2}}\leq
C\left(\frac{\tilde\omega^2}{\omega^2}\right)^{\frac{1}{2}}\leq
\frac{C}{2} \tr{\omega}{\tilde\omega}.\label{lb}
\end{equation}
Then at a point $(x_0, t_0)$ with $t_0>T_I$ where
 $Q_2$ attains a minimum, $\tr{\omega}{\tilde
\omega}\leq 3-\dot\varphi$ and so
$e^{-\frac{\dot\varphi+\varphi}{2}}\leq C(3-\dot\varphi),$
which gives a uniform lower bound for $\dot{\varphi}$.  This completes the proof of (ii).

Part (iii) follows from (i) and (ii) and the equations \eqref{CRF} and \eqref{volformbds}.
 \end{proof}
\end{lemma}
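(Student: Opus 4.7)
The three parts are proved in sequence, each by the parabolic maximum principle applied to a suitably chosen barrier, and part (iii) will fall out of the preceding two combined with the Monge-Amp\`ere form \eqref{CRF}. A preliminary observation I would use throughout is that on $[T_I,\infty)$ the reference volume forms are comparable to $\Omega$: from
$$e^t\tilde\omega^2 = e^{-t}\of^2 + 2(1-e^{-t})\of\wedge\omega_S$$
and the fact that $\of\wedge\omega_S$ is a fixed positive smooth volume form on $M$, one obtains $C^{-1}\Omega \le e^t\tilde\omega^2 \le C\Omega$. On the compact interval $[0,T_I]$, standard parabolic theory handles everything, so the issue is $t\ge T_I$.

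For (i), I would apply the maximum principle directly to $\varphi$. At a minimum $(x_0,t_0)$ with $t_0>T_I$, $\ddbar\varphi \ge 0$, whence $\omega^2\ge \tilde\omega^2$, and \eqref{CRF} gives
$$0 \ge \dot\varphi = \log\frac{e^t\omega^2}{\Omega} - \varphi \ge \log\frac{e^t\tilde\omega^2}{\Omega} - \varphi \ge -\log C - \varphi,$$
yielding $\varphi\ge -C$. The symmetric argument at a maximum produces the upper bound.

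For (ii), differentiating \eqref{CRF} in $t$ and using $\partial_t\tilde\omega = \omega_S - \tilde\omega$ gives
$$\Bigl(\frac{\partial}{\partial t}-\Delta\Bigr)\dot\varphi = \tr{\omega}{(\omega_S-\tilde\omega)} + 1 - \dot\varphi,$$
whose trace term has no definite sign. For the upper bound I would fix $C_0>1$ with $C_0\tilde\omega \ge \omega_S$ on $[T_I,\infty)$ and run the maximum principle on $Q_1 = \dot\varphi - (C_0-1)\varphi$; using $\Delta\varphi = 2 - \tr{\omega}{\tilde\omega}$ together with the nonpositivity of $\omega_S - C_0\tilde\omega$, the troublesome trace term gets absorbed and one finds $(\partial_t-\Delta)Q_1 \le 1 - C_0\dot\varphi + 2(C_0-1)$, delivering an upper bound on $Q_1$ hence (via (i)) on $\dot\varphi$. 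The lower bound on $\dot\varphi$ is the main obstacle, since this trick now pushes the trace term the wrong way. My plan is to test instead with $Q_2 = \dot\varphi + 2\varphi$; an analogous computation gives $(\partial_t-\Delta)Q_2 \ge \tr{\omega}{\tilde\omega} + \dot\varphi - 3$, so at a minimum $\tr{\omega}{\tilde\omega} \le 3-\dot\varphi$. To convert this into a pointwise bound on $\dot\varphi$ I would invoke the complex arithmetic-geometric mean inequality $\tr{\omega}{\tilde\omega}\ge 2(\tilde\omega^2/\omega^2)^{1/2}$ together with the Monge-Amp\`ere identity $\omega^2 = e^{\dot\varphi+\varphi-t}\Omega$ and the volume comparison above to extract $e^{-(\dot\varphi+\varphi)/2} \le C\,\tr{\omega}{\tilde\omega}$; combining with the trace bound and part (i) yields $e^{-\dot\varphi/2} \le C(3-\dot\varphi)$ at the minimum, which forces $\dot\varphi \ge -C$.

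Part (iii) is then immediate: from \eqref{CRF} the quantity $e^t\omega^2/\Omega = e^{\dot\varphi+\varphi}$ is pinched between positive constants by (i) and (ii), and since $e^t\tilde\omega^2$ is comparable to $\Omega$ on $[T_I,\infty)$, we conclude $\omega^2$ and $\tilde\omega^2$ are comparable. I would expect the Gauduchon hypothesis to play no role in this lemma, since only pointwise maximum-principle arguments are needed and no integration by parts appears; the Gauduchon assumption should enter only later when the sharper decay $|\varphi|\le C(1+t)e^{-t}$ is derived.
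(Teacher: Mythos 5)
Your proposal is correct and follows the paper's argument essentially step for step: the same preliminary volume-form comparison, the same direct maximum principle for $\varphi$, the same barriers $Q_1=\dot\varphi-(C_0-1)\varphi$ and $Q_2=\dot\varphi+2\varphi$ with the identical trace-absorption and AM--GM closing step, and the same deduction of (iii). Your closing remark that the Gauduchon hypothesis plays no role here is also accurate.
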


Our next result is an exponential decay estimate for $\varphi$.  We first need a lemma.  Recall that the volume form $\Omega$ is defined by \eqref{Omdefn}.  This lemma is the only place in the paper where we make use of the Gauduchon assumption on $\omega_0$.

\begin{lemma}\label{good} We have that
\begin{equation}\label{goal}
\Omega=2\omega_{\mathrm{flat}}\wedge\omega_S.
\end{equation}
\end{lemma}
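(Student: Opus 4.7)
The plan is to use that $\Omega$ is uniquely characterized by the two defining conditions in \eqref{Omdefn}, and to verify that $2\omega_{\mathrm{flat}}\wedge\omega_S$ satisfies those same two conditions. Positivity of $2\omega_{\mathrm{flat}}\wedge\omega_S$ has already been noted in Section \ref{section:prel}. The integral normalization is immediate: since $\omega_S=\pi^*\omega_S$ is pulled back from a Riemann surface it is $d$-closed, so Stokes' theorem gives $\int_M\ddbar\rho\wedge\omega_S=0$, and hence $\int_M 2\omega_{\mathrm{flat}}\wedge\omega_S=\int_M 2\omega_0\wedge\omega_S$ as required.

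It remains to verify the Chern-Ricci condition $\ddbar\log(\omega_{\mathrm{flat}}\wedge\omega_S)=\omega_S$. I would work in a local holomorphic trivialization $\pi^{-1}(U)\cong U\times E$ with coordinates $(z,w)$. Writing $\omega_S=\sqrt{-1}\,a(z)\,dz\wedge d\bar z$, only the $dw\wedge d\bar w$-component $F(z,w)$ of $\omega_{\mathrm{flat}}$ contributes to the wedge product $\omega_{\mathrm{flat}}\wedge\omega_S$. The semi-flat property, namely that $\omega_{\mathrm{flat}}|_{E_z}$ is the flat metric on the torus $E_z$, forces $F$ to be independent of $w$, so $F=F(z)$. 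Hence $\omega_{\mathrm{flat}}\wedge\omega_S$ has density a positive constant multiple of $F(z)\,a(z)$, giving $\ddbar\log(\omega_{\mathrm{flat}}\wedge\omega_S)=\ddbar\log F+\ddbar\log a$. The Kähler-Einstein equation $\mathrm{Ric}(\omega_S)=-\omega_S$ translates into $\ddbar\log a=\omega_S$, so the problem reduces to showing $\ddbar\log F(z)=0$, i.e.\ that $F$ is locally constant on the base.

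This last step is precisely where the Gauduchon hypothesis enters. In the trivialization, $F(z)\cdot|\Lambda|=\int_{E_z}\omega_{\mathrm{flat}}$ where $|\Lambda|$ is the Euclidean area of the lattice defining $E$; since $\omega_{\mathrm{flat}}-\omega_0=\ddbar\rho$ is $\ddbar$-exact and $E_z$ is closed, Stokes on the fiber gives $F(z)\cdot|\Lambda|=V(z):=\int_{E_z}\omega_0$. Since $\pi$ is a proper holomorphic submersion with fibers of complex dimension one, fiber integration $\pi_*$ sends the $(1,1)$-form $\omega_0$ to the function $V$ on $S$ and commutes with $\partial$ and $\bar\partial$. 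On a surface, Gauduchon means simply $\ddbar\omega_0=0$, so $\ddbar V=\pi_*(\ddbar\omega_0)=0$. A pluriharmonic function on the compact Riemann surface $S$ is constant, hence $V$, and therefore $F$, is constant in $z$. This completes the verification, and the lemma follows by the uniqueness of $\Omega$.

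The main subtle step is the fiber-integration argument showing that the Gauduchon condition forces the fiber volumes to be constant; it is the place where the two-dimensionality of $M$ (so that $\omega_0$ itself is the relevant form in the Gauduchon condition) and the compactness of the base $S$ combine to produce the global rigidity needed to match $\Omega$ with $2\omega_{\mathrm{flat}}\wedge\omega_S$.
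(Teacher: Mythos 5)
Your proposal is correct and follows essentially the same route as the paper: a local trivialization $\pi^{-1}(U)\cong U\times E$, semi-flatness to identify the fiber density as a function of the base variable alone, fiber integration combined with the Gauduchon condition $\partial\bar\partial\omega_0=0$ and the compactness of $S$ to show that fiber volumes are constant, and the K\"ahler--Einstein equation to close the local computation. The only cosmetic difference is your framing (verify the two defining properties of $\Omega$ and invoke uniqueness) versus the paper's (show directly that $\partial\bar\partial\log\bigl(\Omega/(\omega_{\mathrm{flat}}\wedge\omega_S)\bigr)=0$); the substance is identical.
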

\begin{proof}
Since $2\int_M \omega_{\mathrm{flat}}\wedge\omega_S=2\int_M \omega_{0}\wedge\omega_S=\int_M \Omega$, it is enough to show that
\begin{equation}\label{goal2}
\ddbar\log \frac{\Omega}{\omega_{\mathrm{flat}}\wedge\omega_S}=0.
\end{equation}
Recalling that $M$ is an elliptic bundle with fiber $E$, we can fix a small ball $B\subset S$ over which $\pi$ is holomorphically trivial, so $\pi^{-1}(B)\cong B\times E$.
If we identify $E=\mathbb{C}/\Lambda$, for some lattice $\Lambda\subset\mathbb{C}$, and call $z^1$ the coordinate
on $\mathbb{C}$, then $dz^1$ descends to a never vanishing holomorphic $1$-form on $E$.
If we call $\alpha$ its pullback to $B\times E$, then $\mn\alpha\wedge\ov{\alpha}$ is a smooth semi-flat form
on $\pi^{-1}(U)$. Then there is a function $u(y)$ defined on $B$ such that for any $y\in B$ we have
$$\omega_{\mathrm{flat}}|_{E_y}=u(y) \mn\alpha\wedge\ov{\alpha}.$$
This is because both $\omega_{\mathrm{flat}}|_{E_y}$ and $\mn\alpha\wedge\ov{\alpha}$ are flat volume forms on $E_y$, and so their ratio is a constant on $E_y$.
Integrating this equality over $E_y$ we get
$$\int_{E_y}\omega_{\mathrm{flat}}=u(y)\int_{E_y} \mn\alpha\wedge\ov{\alpha}.$$
But on the one hand the integral $\int_{E_y} \mn\alpha\wedge\ov{\alpha}$ is independent of $y$ by definition, and on the other hand
the function $y\mapsto \int_{E_y}\omega_{\mathrm{flat}}$ is also constant in $y$, because it equals the pushforward $\pi_*\omega_{\mathrm{flat}}$ and we have
$$\de\db \pi_*\omega_{\mathrm{flat}} = \pi_*\de\db\omega_{\mathrm{flat}}=\pi_*\de\db\omega_0=0.$$
Note that the last equality uses the Gauduchon condition.  This
 implies that $\pi_*\omega_{\mathrm{flat}}$ is constant by the strong maximum principle. Therefore $u$ is a constant.

Fix now a point $x\in M$, call $y=\pi(x)$, and choose local bundle coordinates near $x$ and $y$, so that in these coordinates
the projection $\pi$ is given by $\pi(z^1,z^2)=z^2$. Then write locally
$$\omega_S = \mn g(z^2) dz^2\wedge d\ov{z}^2,$$
$$\Omega=G(z^1,z^2) (\mn)^2 dz^1\wedge d\ov{z}^1\wedge dz^2\wedge d\ov{z}^2,$$
and compute
\begin{equation}\label{calc}
F:=\frac{\Omega}{\omega_{\mathrm{flat}}\wedge\omega_S}=\frac{\Omega}{u\sqrt{-1}\alpha\wedge\ov{\alpha}\wedge\omega_S}=G(ug)^{-1},
\end{equation}
and so locally on $S$ we have
\begin{equation}\label{crux2}
\ddbar\log F=\omega_S+\Ric(\omega_S)=0,
\end{equation}
because $\Ric(\Omega)=-\omega_S=\Ric(\omega_S)$. This proves \eqref{goal2}.

Incidentally the same calculation proves that the volume form $\omega_{\mathrm{flat}}\wedge\omega_S$ satisfies
$$\Ric(\omega_{\mathrm{flat}}\wedge\omega_S)=- \omega_S,$$
which gives another proof of the fact that $c_1(M)=\pi^*c_1(S)$.
\end{proof}

Very similar arguments can be found in the paper of Song-Tian \cite{ST}, in the K\"ahler case (see also \cite{ST2, To}).\\

\begin{remark} Lemma \ref{good} fails if we drop the assumption that $\de\db\omega_0=0$. Indeed, consider the case when $M=S\times E$, and
let $\omega_E$ be a flat K\"ahler metric on $E$ (while $\omega_S$ is as before).
If $F:S\to\mathbb{R}$ is any nonconstant positive function then
$$\omega_0=F\omega_E+\omega_S,$$
is a Hermitian metric on $M$ with $\de\db\omega_0\neq 0$. We have that $\omega_0=\omega_{\mathrm{flat}}$, because $\omega_0$ is already semi-flat.
On the other hand, $\Omega = c\cdot\omega_E\wedge\omega_S$, where $c$ is the constant given by
$$c=\frac{2\int_M \omega_0\wedge\omega_S}{\int_M \omega_E\wedge\omega_S}.$$
Therefore we have
$$\Omega\neq 2\omega_{\mathrm{flat}}\wedge\omega_S=F\omega_E\wedge\omega_S.$$
\end{remark}

We can now prove a decay estimate for $\varphi$.  The analogous estimate was proved for the K\"ahler-Ricci flow on a product surface in \cite[Lemma 6.7]{SW}.  The proof in this case is almost identical, given Lemma \ref{good}.

\begin{lemma}\label{key}  There exists a uniform constant $C>0$
such that on $M\times [0,\infty)$,
$$|\varphi|\leq C(1+t)e^{-t}.$$
\end{lemma}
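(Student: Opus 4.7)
The plan is to apply the maximum principle to the auxiliary functions $Q_{\pm}(x,t) = e^{t}\varphi(x,t) \mp At$ for a sufficiently large constant $A$, exploiting the identity $\Omega = 2\of\wedge\omega_S$ from Lemma \ref{good}.

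First I would establish a sharp comparison between $e^{t}\tilde{\omega}^{2}$ and $\Omega$. Expanding
$$\tilde{\omega}^{2} = e^{-2t}\of^{2} + 2e^{-t}(1-e^{-t})\of\wedge\omega_S + (1-e^{-t})^{2}\omega_S^{2},$$
and using that $\omega_S\wedge\omega_S = 0$ because $\omega_S$ is the pullback of a $(1,1)$-form from the one-dimensional base $S$, one obtains after subtracting $\Omega = 2\of\wedge\omega_S$ that
$$e^{t}\tilde{\omega}^{2} - \Omega = e^{-t}\bigl(\of^{2} - 2\of\wedge\omega_S\bigr).$$
Since $\Omega$ is a fixed smooth positive volume form, this yields the key estimate
$$\left|\log\frac{e^{t}\tilde{\omega}^{2}}{\Omega}\right| \le Ce^{-t}.$$

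Next, equation \eqref{CRF} rewrites as $\dot{\varphi} + \varphi = \log(e^{t}\omega^{2}/\Omega)$, so
$$\partial_{t}Q_{+} = e^{t}(\varphi + \dot{\varphi}) - A = e^{t}\log\frac{e^{t}\omega^{2}}{\Omega} - A.$$
Suppose the maximum of $Q_{+}$ on $M\times[T_{I},T]$ is attained at some $(x_0, t_0)$ with $t_0 > T_I$. At the spatial maximum $\ddbar\varphi(x_0, t_0) \le 0$, so $\omega \le \tilde{\omega}$ as positive Hermitian $(1,1)$-forms at this point; monotonicity of the determinant on positive Hermitian matrices then forces $\omega^{2} \le \tilde{\omega}^{2}$ pointwise as volume forms, and combined with the key estimate we conclude
$$\log\frac{e^{t_0}\omega^{2}(x_0, t_0)}{\Omega(x_0)} \le Ce^{-t_0},$$
whence $\partial_{t}Q_{+}(x_0, t_0) \le C - A < 0$ once $A$ is chosen larger than $C$. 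This contradicts the condition $\partial_{t}Q_{+}(x_0, t_0) \ge 0$ at a time maximum. Therefore $Q_{+}$ attains its maximum at $t = T_I$, where $\varphi$ is uniformly bounded by Lemma \ref{lowerbound}(i); this gives $Q_{+} \le C'$ and hence $\varphi \le C(1+t)e^{-t}$. The lower bound is obtained by the mirror argument applied to $Q_{-} = e^{t}\varphi + At$ at a space-time minimum: there $\omega \ge \tilde{\omega}$, so $\omega^{2} \ge \tilde{\omega}^{2}$, and all the inequalities reverse.

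The only subtle point is the sharp $Ce^{-t}$ control of $\log(e^{t}\tilde{\omega}^{2}/\Omega)$, which relies crucially on both $\omega_S\wedge\omega_S = 0$ (from the one-dimensionality of the base) and the identity of Lemma \ref{good} (which uses the Gauduchon hypothesis on $\omega_0$). Without Lemma \ref{good}, one would only have $e^{t}\tilde{\omega}^{2} = c\cdot\Omega + O(e^{-t})$ for a non-constant function $c$, and the argument would degenerate to the uniform bound $|\varphi| \le C$ rather than the required exponential decay.
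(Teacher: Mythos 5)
Your proof is correct and takes essentially the same approach as the paper: both establish the key estimate $|\log(e^t\tilde{\omega}^2/\Omega)| \leq Ce^{-t}$ using Lemma~\ref{good} together with $\omega_S \wedge \omega_S = 0$, and both apply the maximum principle to $e^t\varphi \pm At$, observing at the space-time extremum that the sign of $\ddbar\varphi$ lets one replace $\omega^2$ by $\tilde{\omega}^2$. The only cosmetic difference is that you spell out the upper-bound case and the pointwise form $e^t\tilde{\omega}^2 - \Omega = e^{-t}(\omega_{\mathrm{flat}}^2 - 2\omega_{\mathrm{flat}}\wedge\omega_S)$ explicitly, whereas the paper treats the two bounds as "similar" and phrases the key estimate as $|e^t\log(e^t\tilde{\omega}^2/\Omega)|\le C$.
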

\begin{proof}
First, we claim that for $t\geq T_I$,
\begin{equation} \label{claimOmega}
\left| e^t  \log \frac{e^t \tilde{\omega}^2}{\Omega}\right|\le C.
\end{equation}
This follows from  the argument in \cite[Lemma 6.7]{SW}.  Indeed,  using Lemma \ref{good}, we see that
\begin{align*}
e^t  \log \frac{e^t \tilde{\omega}^2}{\Omega} = {} & e^t \log \frac{2\omega_S \wedge \of + e^{-t} (\of^2 - 2\of \wedge \omega_S)}{2\omega_S \wedge \of} \\
= {} & e^t \log (1+ O(e^{-t})),
\end{align*}
which is bounded.

Define now $Q=e^t \varphi +At$, for $A$ a large positive constant to be determined.   Then
\begin{equation}
\frac{\partial Q}{\partial t} = e^t \log \left( \frac{e^t (\tilde{\omega}+ \ddbar \varphi)^2}{\Omega} \right)+ A.
\end{equation}
We wish to bound $Q$ from below.  Suppose that $(x_0, t_0)$ is a point with $t_0 > T_I$ at which $Q$ achieves a minimum.  At this point we have
\begin{align*}
0 \ge \frac{\partial Q}{\partial t} \ge {} &
e^t\log\frac{e^t\ti{\omega}^2}{\Omega}+A \ge   -C'+A,
\end{align*}
for a uniform $C'$, thanks to \eqref{claimOmega}.
 Choosing $A> C'$ gives contradiction.
Hence $Q$ is bounded from below and it follows that $\varphi\geq -C(1+t)e^{-t}$ for a uniform
$C$. The upper bound for $\varphi$ is similar.
 \end{proof}

\section{Torsion and curvature of the reference metrics} \label{sectorsion}

This section is devoted to proving a technical lemma on estimates for the torsion and curvature of the reference metrics.  These estimates will be needed later in Sections \ref{sectiontrace} and \ref{sectioncalabi}.

Recall that the reference forms $\tilde{\omega}=\tilde{\omega}(t)$ are given by
$$\tilde{\omega} = e^{-t} \of + (1-e^{-t}) \omega_S.$$
For $t \ge T_I$, this defines a Hermitian metric which we denote by  $\tilde{g}$.
We will use a tilde to denote quantities with respect to $\tilde{g}$, such as $\tilde{T}_{ij}^k$ for  the torsion tensor,  $\tilde{\nabla}$ for the Chern connection and $\widetilde{\textrm{Rm}}$ for the Chern curvature tensor.  We will write $\db\ti{T}$ for the tensor $\de_{\ov{\ell}}\ti{T}^i_{jk}=\ti{\nabla}_{\ov{\ell}}
\ti{T}^i_{jk}$.

Denote by $(T^0)_{ij}^k$ the torsion tensor of the initial metric $g_0$, and
 $T^0_{ij\ov{\ell}}=(T^0)^k_{ij} (g_0)_{k\ov{\ell}}.$ Then
since $d\ti{\omega}=e^{-t}d\omega_0$, we have $\ti{T}_{ij\ov{\ell}}=e^{-t}T^0_{ij\ov{\ell}}.$

\begin{lemma} \label{lemmatildegestimates}  There exists a uniform constant $C$ such that for $t \ge T_I$,
\begin{enumerate}
\item[(i)]  $\displaystyle{ | \tilde{T}|_{\tilde{g}} \le C}$.
\smallskip
\item[(ii)]  $\displaystyle{| \db \tilde{T}|_{\tilde{g}} + | \ti{\nabla} \tilde{T}|_{\tilde{g}} +  | \widetilde{\emph{Rm}} |_{\tilde{g}}  \le C e^{t/2}}$.
\smallskip
\item[(iii)] $\displaystyle{|\ov{\ti{\nabla}}\db\ti{T}|_g+ |\ti{\nabla}\db\ti{T}|_g\leq Ce^{t}}$.
\end{enumerate}
\end{lemma}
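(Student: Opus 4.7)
The plan is to carry out an explicit computation in local adapted coordinates $(z^1,z^2)$ on a trivialization $\pi^{-1}(U)\cong U\times E$, where $z^2$ is pulled back from $S$ and $z^1$ is the fiber coordinate. The structural facts I will exploit are: (a) $\omega_S$ has only a $(2,\bar 2)$-component in these coordinates, since it is pulled back from $S$; (b) $(g_{\mathrm{flat}})_{1\bar 1}$ depends only on $z^2$, because $\omega_{\mathrm{flat}}$ restricts to a flat metric $u(z^2)\mn\alpha\wedge\bar\alpha$ on each fiber as in the proof of Lemma \ref{good}; (c) torsion is antisymmetric in its lower two indices, so in complex dimension $2$ the only nonzero components of $T^0_{ij\bar\ell}$ occur when $\{i,j\}=\{1,2\}$.

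Combining these observations gives the block structure $\tilde g_{1\bar 1},\tilde g_{1\bar 2},\tilde g_{2\bar 1}=O(e^{-t})$ while $\tilde g_{2\bar 2}=O(1)$, and inverting, $\tilde g^{\bar 1 1}=O(e^t)$ while the other components of $\tilde g^{-1}$ are $O(1)$. Moreover $\partial_1\tilde g_{1\bar 1}=\partial_{\bar 1}\tilde g_{1\bar 1}=0$, and in general $\partial^\alpha \tilde g_{i\bar j}$ is of the same order as $\tilde g_{i\bar j}$ itself.

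For bound (i), from $d\ti\omega=e^{-t}d\omega_0$ we have $\ti T_{ij\bar\ell}=e^{-t}T^0_{ij\bar\ell}$, so expanding
$$|\ti T|^2_{\ti g}=\ti g^{\bar b a}\ti g^{\bar d c}\ti g^{e\bar f}\ti T_{ac\bar f}\overline{\ti T_{bd\bar e}}$$
every term carries a prefactor $e^{-2t}$. Antisymmetry forces $\{a,c\}=\{b,d\}=\{1,2\}$, so the pair $\ti g^{\bar b a}\ti g^{\bar d c}$ can use at most one bad factor $\ti g^{\bar 1 1}=O(e^t)$ (since the other raised index is $2$), while $\ti g^{e\bar f}$ adds at most one more $e^t$; the worst term is thus $e^{-2t}\cdot e^{2t}=O(1)$. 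For bound (ii), each extra covariant index of $\bar\partial\ti T$ or $\ti\nabla \ti T$ introduces one additional $\ti g^{-1}$ factor in the norm; since $\partial\ti g^{-1}$ and $\partial \ti T$ scale the same as $\ti g^{-1}$ and $\ti T$ respectively, this picks up at most one more $e^t$, giving $|\bar\partial\ti T|^2_{\ti g}+|\ti\nabla\ti T|^2_{\ti g}=O(e^t)$. For curvature, using $\widetilde R_{k\bar\ell i\bar j}=-\partial_k\partial_{\bar\ell}\ti g_{i\bar j}+\ti g^{p\bar q}\partial_k\ti g_{i\bar q}\partial_{\bar\ell}\ti g_{p\bar j}$, a case-by-case check shows each component is of the same order as $\ti g_{i\bar j}$ (with the purely fiber components even better, since $\partial_1 \ti g_{1\bar 1}=0$); contracted with four inverse metrics this gives $|\widetilde{\mathrm{Rm}}|^2_{\ti g}=O(e^t)$. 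Bound (iii) follows by the same scheme with two extra covariant indices, yielding $O(e^{2t})$ for the squared norm and $O(e^t)$ for the norm; the passage to the $g$-norm costs only uniform constants once we know $g$ and $\ti g$ are equivalent, but in fact the same argument works verbatim with $\ti g$ and passes to $g$ through Lemma \ref{lowerbound}.

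The main obstacle I expect is the careful index bookkeeping: every appearance of the bad factor $\ti g^{\bar 1 1}=O(e^t)$ must be compensated either by the $e^{-t}$ weighting of the fiber components of $\ti g$ (and its derivatives) or by the antisymmetry of torsion that kills its pure fiber component. The two structural inputs that make the counting work are the pullback nature of $\omega_S$ and the fact that $(g_{\mathrm{flat}})_{1\bar 1}$ has no fiber derivatives.
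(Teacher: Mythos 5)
Your proposal follows essentially the same route as the paper: adapted product holomorphic coordinates $(z^1,z^2)$ with $z^1$ fibrewise, the block structure of $\tilde g$ and $\tilde g^{-1}$ in powers of $e^{\pm t}$, antisymmetry of the torsion in the lower indices (which caps the number of ``bad'' $\tilde g^{1\bar 1}$ factors), and the key vanishing $\partial_1(\gf)_{1\bar 1}=0$. The order-counting heuristic you use is the same in spirit as the paper's case-by-case verification, and the two structural facts you isolate (the pulled-back nature of $\omega_S$ and the absence of fiber derivatives of $(\gf)_{1\bar 1}$) are precisely the ones the paper exploits; the paper additionally chooses normal coordinates for $g_S$ at the base point to simplify the bookkeeping, but as your counting shows this is a convenience, not a necessity. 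One correction for part (iii): to pass from the $\tilde g$-norm to the $g$-norm you cite Lemma \ref{lowerbound}, but Lemma \ref{lowerbound}(iii) gives only equivalence of the volume forms $\omega^2$ and $\tilde\omega^2$, not of the metrics themselves; the uniform equivalence $C^{-1}\tilde g\le g\le C\tilde g$ is Theorem \ref{metricbdthm}, which is established in the following section and relies only on parts (i) and (ii) of the present lemma, so there is no circularity. The paper implicitly makes the same forward reference when stating (iii) in the $g$-norm.
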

\begin{proof}
We may choose local product holomorphic coordinates  $z^1, z^2$, independent of $t$, with $z^1$ in the fiber direction and $z^2$ in the base direction.  Since $\gf$ is flat in the $z^1$ direction, we may assume that derivatives of $(\gf)_{1\ov{1}}$ in the $z^1$ direction vanish.
Now with respect to these coordinates,
we may write
\begin{align}  \label{localgt1}
\tilde{g}_{1\ov{1}} ={} & e^{-t} (g_{\textrm{flat}})_{1\ov{1}}, \quad \tilde{g}_{1\ov{2}} = e^{-t} (g_{\textrm{flat}})_{1\ov{2}}\\ \label{localgt2}
\tilde{g}_{2\ov{1}} ={} & e^{-t} (g_{\textrm{flat}})_{2\ov{1}}, \quad \tilde{g}_{2\ov{2}} = e^{-t}(g_{\textrm{flat}})_{2\ov{2}} + (1-e^{-t})(g_S)_{2\ov{2}},
\end{align}
where we are writing $g_S$ for $\pi^* g_S$.
Then a straightforward computation shows that there exists a uniform constant $C>0$ so that
$$\frac{e^{-t}}{C} \le \tilde{g}_{1\ov{1}} \le C e^{-t}, \quad \frac{e^{t}}{C} \le \tilde{g}^{1\ov{1}} \le C e^{t}$$
$$ \frac{1}{C} \le \tilde{g}_{2\ov{2}} \le C, \quad \frac{1}{C} \le \tilde{g}^{2\ov{2}} \le C$$
$$| \tilde{g}_{1\ov{2}}| \le Ce^{-t} \quad | \tilde{g}^{1\ov{2}}| \le C.$$
Then
$$| \tilde{T}|_{\tilde{g}}^2 = \tilde{g}^{i\ov{j}} \tilde{g}^{k\ov{\ell}} \tilde{g}^{p\ov{q}} \tilde{T}_{ik\ov{q}} \ov{ \tilde{T}_{j \ell \ov{p}}} = e^{-2t}\tilde{g}^{i\ov{j}} \tilde{g}^{k\ov{\ell}} \tilde{g}^{p\ov{q}} \Tf_{ik\ov{q}} \ov{ \Tf_{j \ell \ov{p}}}.$$
This is uniformly bounded since the only unbounded terms of type $\tilde{g}^{i\ov{j}}$ are the terms $\tilde{g}^{1\ov{1}}$, but by the skew-symmetry of $\Tf_{ij\ov{\ell}}$ in $i,j$, there can be at most two such terms in the above expression, and each is bounded above by $Ce^t$ (since the components  $\Tf_{ij\ov{k}}$ are all uniformly bounded in the holomorphic coordinates $z^1, z^2$).  This completes the proof of (i).

For (ii), note that we may choose the coordinates $z^1, z^2$ as above with the additional property that at a fixed point  $x$ say, the derivative $\partial_{2} g_S$ vanishes.  This implies that at $x$ we have $\partial_i \tilde{g}_{j\ov{\ell}} = e^{-t} \partial_i (\gf)_{j\ov{\ell}}$ for all $i,j,\ell$. Note that since our coordinates are independent of $t$ and depend continuously on the point $x\in M$, we may allow our constants to depend on this choice of coordinate system.

We first claim that
at $x$,
\begin{equation} \label{Gammabd}
| \tilde{\Gamma}_{ik}^p |^2_{\tilde{g}}:= \tilde{g}^{i\ov{j}} \tilde{g}^{k\ov{\ell}} \tilde{g}_{p\ov{q}} \tilde{\Gamma}_{ik}^p \ov{\tilde{\Gamma}_{j\ell}^q} \le C e^{t},
\end{equation}
where $\tilde{\Gamma}^p_{ik}$ are the Christoffel symbols of the Chern connection of $\tilde{g}$.
Note that since $\tilde{\Gamma}^p_{ik}$ is not a tensor, this quantity depends on our choice of coordinates.

For \eqref{Gammabd} compute,
$$| \tilde{\Gamma}^p_{ik} |^2_{\tilde{g}}  = \tilde{g}^{i\ov{j}} \tilde{g}^{k\ov{\ell}} \tilde{g}^{p\ov{q}} \partial_i \tilde{g}_{k\ov{q}} \partial_{\ov{j}} \tilde{g}_{p\ov{\ell}}= e^{-2t} \tilde{g}^{i\ov{j}} \tilde{g}^{k\ov{\ell}} \tilde{g}^{p\ov{q}} \partial_i (\gf)_{k\ov{q}} \partial_{\ov{j}} (\gf)_{p\ov{\ell}}.$$
But this is bounded from above by $Ce^{t}$ since each term of type $\tilde{g}^{i\ov{j}}$ is bounded from above by $Ce^t$.

Next note that, at $x$,
\begin{equation} \label{barT}
| \partial_{\ov{\ell}} \Tf_{ij\ov{k}}|_{\tilde{g}}^2 \le C e^{3t}.
\end{equation}
Indeed this follows from the skew symmetry of $\partial_{\ov{\ell}} \Tf_{ij\ov{k}}$ (again, not a tensor) in the indices $i$ and $j$. Then at $x$,
\begin{equation}\begin{split}\label{ii1}
| \db \tilde{T} |^2_{\tilde{g}} = & e^{-2t} | \tilde{\nabla}_{\ov{\ell}} \Tf_{ij\ov{k}}|^2_{\tilde{g}} \\
= & e^{-2t} | \partial_{\ov{\ell}} \Tf_{ij\ov{k}} - \ov{\tilde{\Gamma}^r_{\ell k}} \Tf_{ij\ov{r}}|^2_{\tilde{g}} \\
\le & 2 e^{-2t} | \partial_{\ov{\ell}} \Tf_{ij\ov{k}}|_{\tilde{g}}^2 + 2e^{-2t}| \tilde{\Gamma}^r_{\ell k}|^2_{\tilde{g}} | \Tf_{ij\ov{r}}|^2_{\tilde{g}} \le C e^t
\end{split}\end{equation}
where the last inequality follows from
 \eqref{Gammabd}, \eqref{barT} and the fact that $| \Tf_{ij\ov{r}}|^2_{\tilde{g}} \le Ce^{2t}$.

The bound on $|\tilde{\nabla} \tilde{T}|_{\tilde{g}}$ is completely analogous (again we compute at $x$):
\begin{equation}\begin{split}\label{ii2}
| \ti{\nabla} \tilde{T} |^2_{\tilde{g}} = & e^{-2t} | \tilde{\nabla}_{\ell} \Tf_{ij\ov{k}}|^2_{\tilde{g}} \\
=  & e^{-2t} | \partial_{\ell} \Tf_{ij\ov{k}} - \tilde{\Gamma}^r_{\ell i} \Tf_{rj\ov{k}}- \tilde{\Gamma}^r_{\ell j} \Tf_{ir\ov{k}}|^2_{\tilde{g}} \\
\le & 2 e^{-2t} | \partial_{\ell} \Tf_{ij\ov{k}}|_{\tilde{g}}^2 + 4e^{-2t}| \tilde{\Gamma}^r_{\ell k}|^2_{\tilde{g}} | \Tf_{ij\ov{k}}|^2_{\tilde{g}}\leq Ce^t.
\end{split}\end{equation}

For the bound on the curvature $\tilde{R}_{i\ov{j}k\ov{\ell}}$ of $\tilde{g}$,  we first compute in our coordinates,
\begin{equation} \label{1111}
| \tilde{R}_{1\ov{1} 1\ov{1}}| \le Ce^{-2t},
\end{equation}
where by $| \cdot |$ we mean the absolute value as a complex (or real) number.  Recall that the Chern curvature of $\tilde{g}$ is given by
$$\ti{R}_{i\ov{j}k \ov{\ell}} = -\partial_i \partial_{\ov{j}} \tilde{g}_{k\ov{\ell}} + \tilde{g}^{p\ov{q}} \partial_{i} \tilde{g}_{k\ov{q}} \partial_{\ov{j}} \tilde{g}_{p\ov{\ell}}.$$
Hence from \eqref{localgt1} and \eqref{localgt2} and the fact that $\partial_1\partial_{\ov{1}} (\gf)_{1\ov{1}}$ and $\partial_1 (\gf)_{1\ov{1}}$ vanish in our coordinate system,
\begin{align*}
\tilde{R}_{1\ov{1}1\ov{1}} ={} & -\partial_1 \partial_{\ov{1}} \tilde{g}_{1\ov{1}} + \tilde{g}^{p\ov{q}} \partial_{1} \tilde{g}_{1\ov{q}} \partial_{\ov{1}} \tilde{g}_{p\ov{1}} \\
= {} &  e^{-2t} \sum_{(p,q)\neq (1,1)} \tilde{g}^{p\ov{q}} \partial_{1} (g_{\textrm{flat}})_{1\ov{q}} \partial_{\ov{1}} (g_{\textrm{flat}})_{p\ov{1}}.
\end{align*}
but since each term $\tilde{g}^{p\ov{q}}$ for $(p,q) \neq (1,1)$ is uniformly bounded, this gives \eqref{1111}.

Next we show that
\begin{equation} \label{2222}
| \tilde{R}_{2\ov{2} 2\ov{2}} | \le C.
\end{equation}
For this note that  $|\partial_2 \partial_{\ov{2}} \tilde{g}_{2\ov{2}}|\le C$ and
\begin{align*}
\sum_{p,q} \tilde{g}^{p\ov{q}} \partial_2 \tilde{g}_{2\ov{q}} \partial_{\ov{2}} \tilde{g}_{p\ov{2}} = {}&  \tilde{g}^{1\ov{1}} \partial_2 \tilde{g}_{2\ov{1}} \partial_{\ov{2}} \tilde{g}_{1\ov{2}} + \sum_{(p,q) \neq (1,1)} \tilde{g}^{p\ov{q}} \partial_2 \tilde{g}_{2\ov{q}} \partial_{\ov{2}} \tilde{g}_{p\ov{2}},
\end{align*}
but the first term is of order $O(e^{-t})$ and the second is uniformly bounded.  This proves \eqref{2222}.

Finally, we show that
\begin{equation} \label{ijkl}
| \tilde{R}_{i\ov{j} k\ov{\ell}} | \le Ce^{-t}, \quad \textrm{for } (i,j,k, \ell) \ \textrm{not all equal}.
\end{equation}
To see this observe that, for $i,j,k, \ell$ not all equal, we have
\begin{equation} \label{ijkl1}
| \partial_i \partial_{\ov{j}} \ti{g}_{k\ov{\ell}} | \le Ce^{-t}.
\end{equation}
Indeed, this follows immediately from \eqref{localgt1} and \eqref{localgt2} unless $k=\ell=2$.  But then one of $i$ or $j$ must equal 1 and we use the fact that $\partial_{1} (g_S)_{2\ov{2}}=0$.

Moreover, for $i,j,k, \ell$ not all equal,  we claim:
\begin{equation} \label{ijkl2}
\left| \sum_{p,q} \tilde{g}^{p\ov{q}} \partial_i \tilde{g}_{k\ov{q}} \partial_{\ov{j}} \tilde{g}_{p\ov{\ell}} \right| \le Ce^{-t}.
\end{equation}
Indeed, first assume that neither $k$ nor $\ell$ is equal to 2.  Then
$$| \tilde{g}^{p\ov{q}}|\le C e^t, \ | \partial_i \tilde{g}_{k\ov{q}}| \le Ce^{-t} \quad \textrm{and} \quad |\partial_{\ov{j}} \tilde{g}_{p\ov{\ell}}| \le Ce^{-t},$$
and so
$$| \tilde{g}^{p\ov{q}} \partial_i \tilde{g}_{k\ov{q}} \partial_{\ov{j}} \tilde{g}_{p\ov{\ell}}| \le Ce^{-t}.$$
Next suppose that $k=2$ and $\ell=1$.  Then
\begin{align*}
\sum_{p,q} \tilde{g}^{p\ov{q}} \partial_i \tilde{g}_{2\ov{q}} \partial_{\ov{j}} \tilde{g}_{p\ov{1}}
={} & \sum_{p} \tilde{g}^{p\ov{1}} \partial_i \tilde{g}_{2\ov{1}} \partial_{\ov{j}} \tilde{g}_{p\ov{1}} + \sum_{p} \tilde{g}^{p\ov{2}} \partial_i \tilde{g}_{2\ov{2}} \partial_{\ov{j}} \tilde{g}_{p\ov{1}}.
\end{align*}
The  first term on the right hand side is of order $O(e^{-t})$ by the same argument as above, and the second is of order $O(e^{-t})$ since $|\tilde{g}^{p\ov{2}}| \le C$, $|\partial_i \tilde{g}_{2\ov{2}}| \le C$ and $| \partial_{\ov{j}} \tilde{g}_{p\ov{1}}| \le Ce^{-t}$.  This proves \eqref{ijkl2} if $k=2$ and $\ell=1$ and the case $k=1$, $\ell=2$ is similar.

Finally we deal with the case $k=\ell=2$.   We wish to bound
\begin{align*}
\sum_{p,q} \tilde{g}^{p\ov{q}} \partial_i \tilde{g}_{2\ov{q}} \partial_{\ov{j}} \tilde{g}_{p\ov{2}}.
\end{align*}
If one of $p$ or $q$ is equal to 1 then the summand is of order $O(e^{-t})$ by similar arguments to the ones given above.  Otherwise the summand is
$$\tilde{g}^{2\ov{2}} \partial_i \tilde{g}_{2\ov{2}} \partial_{\ov{j}} \tilde{g}_{2\ov{2}},$$
and then we use that fact that one of $i$ or $j$ must be 1, since we are assuming that $i,j,k,\ell$ are not all equal.  But $|\partial_1 \tilde{g}_{2\ov{2}}| \le C e^{-t}$ and so the summand is of order $O(e^{-t})$.  This completes the proof of \eqref{ijkl2}.

Combining \eqref{ijkl1} and \eqref{ijkl2} gives \eqref{ijkl}.

To complete the proof of (ii), we note that
\begin{align*}
| \widetilde{\textrm{Rm}}|^2_{\tilde{g}} = \tilde{g}^{i\ov{q}} \tilde{g}^{p \ov{j}} \tilde{g}^{k \ov{s}} \tilde{g}^{r \ov{\ell}} \tilde{R}_{i\ov{j} k\ov{\ell}} \ov{ \tilde{R}_{q\ov{p} s\ov{r}}}.
\end{align*}
Recall that $\tilde{g}^{a\ov{b}}$ is bounded by $C$ if $(a,b)\neq (1,1)$ and by $Ce^t$ if $a=b=1$.
When $i=j=k=\ell=p=q=r=s=1$, then  we apply \eqref{1111} to see that the summand is bounded by $C$.  We get the same bound if $i=j=k=\ell=2$ or $p=q=r=s=2$ by applying \eqref{1111}, \eqref{2222} and \eqref{ijkl}.
Otherwise, both of the terms $\tilde{R}_{i\ov{j} k\ov{\ell}}$ and $\tilde{R}_{q\ov{p} s\ov{r}}$ are bounded by $Ce^{-t}$ (or better) by \eqref{1111} and \eqref{ijkl} (because
the term $\tilde{R}_{2\ov{2}2\ov{2}}$ does not appear).  Moreover, at least one of the metric terms $\tilde{g}^{-1}$ is bounded by a uniform constant $C$, while the other three are each bounded from above by $Ce^t$.  Thus in every case we obtain $| \widetilde{\textrm{Rm}}|^2_{\tilde{g}} \le Ce^{t}$.  Combining this with (\ref{ii1}) and (\ref{ii2}) gives (ii).

For (iii), compute
\[\begin{split}
\ti{\nabla}_{\ov{p}} \ti{\nabla}_{\ov{q}}\ti{T}_{ij\ov{k}}
=&\ti{\nabla}_{\ov{p}}(\de_{\ov{q}}\ti{T}_{ij\ov{k}} -\ov{\ti{\Gamma}^\ell_{qk}}\ti{T}_{ij\ov{\ell}} )\\
=&\de_{\ov{p}}\de_{\ov{q}}\ti{T}_{ij\ov{k}}-\ti{T}_{ij\ov{\ell}}\de_{\ov{p}}\ov{\ti{\Gamma}^\ell_{qk}}   -\ov{\ti{\Gamma}^\ell_{qk}}\de_{\ov{p}}\ti{T}_{ij\ov{\ell}}
-\ov{\ti{\Gamma}^\ell_{pq}}\de_{\ov{\ell}}\ti{T}_{ij\ov{k}}-\ov{\ti{\Gamma}^\ell_{pk}}\de_{\ov{q}}\ti{T}_{ij\ov{\ell}}\\
&+\ov{\ti{\Gamma}^r_{pq}}\ov{\ti{\Gamma}^\ell_{rk}}\ti{T}_{ij\ov{\ell}}+\ov{\ti{\Gamma}^r_{pk}}\ov{\ti{\Gamma}^\ell_{qr}}\ti{T}_{ij\ov{\ell}}.
\end{split}\]
For the first term we observe that
\begin{equation} \label{doubledbarT}
|\de_{\ov{p}}\de_{\ov{q}}\ti{T}_{ij\ov{k}}|^2_{\ti{g}}=e^{-2t}|\de_{\ov{p}}\de_{\ov{q}}T^0_{ij\ov{k}}|^2_{\ti{g}}\leq Ce^{2t},
\end{equation}
because of the skew symmetry in $i,j$.  And, as in the proof of (ii),
\begin{equation} \label{singledbarT}
| \partial_{\ov{p}} \tilde{T}_{ij\ov{\ell}}|^2_{\tilde{g}} = e^{-2t} | \partial_{\ov{p}} T^0_{ij\ov{k}}|^2_{\tilde{g}} \le Ce^t.
\end{equation}
We claim that
\begin{equation} \label{claimGamma}
|\de_p \ti{\Gamma}^i_{jk}|^2_{\ti{g}}:=\tilde{g}^{p\ov{q}} \tilde{g}^{j\ov{a}} \tilde{g}^{k \ov{b}} \tilde{g}_{i\ov{c}} \partial_p \tilde{\Gamma}^i_{jk} \ov{ \partial_{q} \tilde{\Gamma}^c_{ab}} \le C e^{2t}.
\end{equation}
To see this note that at $x$ we have
\begin{align*}
\de_p \ti{\Gamma}^i_{jk}={} & \ti{g}^{i\ov{\ell}}\de_p\de_j\ti{g}_{k\ov{\ell}}-\ti{g}^{i\ov{s}}\ti{g}^{r\ov{\ell}}\de_p\ti{g}_{r\ov{s}}
\de_j\ti{g}_{k\ov{\ell}} \\
={} & e^{-t} \ti{g}^{i\ov{\ell}}\de_p\de_j (\gf)_{k\ov{\ell}}+(1-e^{-t}) \ti{g}^{i\ov{\ell}}\de_p\de_j (g_S)_{k\ov{\ell}}\\
&- e^{-2t} \ti{g}^{i\ov{s}}\ti{g}^{r\ov{\ell}}\de_p(\gf)_{r\ov{s}}
\de_j (\gf)_{k\ov{\ell}},
\end{align*}
and so (with the obvious notation)
$$|\de_p \ti{\Gamma}^i_{jk}|^2_{\ti{g}}\leq C\left( e^{-2t}|\de_p\de_j (\gf)_{k\ov{\ell}}|^2_{\ti{g}}+|\de_p\de_j (g_S)_{k\ov{\ell}}|^2_{\ti{g}}+e^{-4t}|\de_p (\gf)_{r\ov{s}}
\de_j (\gf)_{k\ov{\ell}}|^2_{\ti{g}}\right).$$
But the second term equals $C|\de_2\de_2 (g_S)_{2\ov{2}}|^2 |\ti{g}^{2\ov{2}}|^4,$ and so is bounded by $C$,
while the other two terms are bounded by $Ce^{2t}$ because each term of type $\ti{g}^{i\ov{j}}$ is bounded above by $Ce^t$. This establishes the claim \eqref{claimGamma}.

Combining \eqref{Gammabd}, \eqref{doubledbarT}, \eqref{singledbarT}, \eqref{claimGamma} and parts (i) and (ii) we have proved the bound $|\ov{\ti{\nabla}}\db\ti{T}|_g\le Ce^t$.

Finally, calculate
\[\begin{split}
\ti{\nabla}_{p} \ti{\nabla}_{\ov{q}}\ti{T}_{ij\ov{k}}
=&\ti{\nabla}_{p}(\de_{\ov{q}}\ti{T}_{ij\ov{k}} -\ov{\ti{\Gamma}^\ell_{qk}}\ti{T}_{ij\ov{\ell}} )\\
=&\de_{p}\de_{\ov{q}}\ti{T}_{ij\ov{k}}-\ti{T}_{ij\ov{\ell}}\de_{p}\ov{\ti{\Gamma}^\ell_{qk}}   -\ov{\ti{\Gamma}^\ell_{qk}}\de_{p}\ti{T}_{ij\ov{\ell}}
-\ti{\Gamma}^\ell_{pi}\de_{\ov{q}}\ti{T}_{\ell j\ov{k}}-\ti{\Gamma}^\ell_{pj}\de_{\ov{q}}\ti{T}_{i\ell\ov{k}}\\
&+\ti{\Gamma}^r_{pi}\ov{\ti{\Gamma}^\ell_{qk}}\ti{T}_{r j\ov{\ell}}+\ti{\Gamma}^r_{pj}\ov{\ti{\Gamma}^\ell_{qk}}\ti{T}_{ir\ov{\ell}}.
\end{split}\]
Arguing as above we can bound the $|\cdot |_{\ti{g}}$ of all these terms by $Ce^t$.
Indeed, the only  terms which are different are the one involving $\de_{p}\ov{\ti{\Gamma}^\ell_{qk}}=-\ov{\ti{R}_{q\ov{p}k}{}^{\ell}}$,
which can be bounded using part (ii) and the one involving $\de_{p}\ti{T}_{ij\ov{\ell}}$ which can be bounded by the same argument as in \eqref{singledbarT}. This finishes the proof of (iii).
\end{proof}

\vskip 2\baselineskip

\section{Evolution of the trace of the metric} \label{sectiontrace}

Let $\omega=\omega(t)$ solve the normalized Chern-Ricci flow (\ref{NCRF}) in the setting of Theorem \ref{main}. The main theorem we prove in this section is:

\begin{theorem}   \label{metricbdthm}
There exists a uniform constant $C>0$ such that for $t \ge T_I$,
$$\emph{\textrm{tr}}_{\tilde{\omega}} \omega \le C.$$
Hence the metrics $\omega$ and $\tilde{\omega}$ are uniformly equivalent for $t \ge T_I$.
\end{theorem}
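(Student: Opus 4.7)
The plan, following the outline in Section~1, is to apply the maximum principle to the quantity
\[
Q = \log \tr{\tilde\omega}{\omega} - Ae^{t/2}\vp + \frac{1}{\tilde C + e^{t/2}\vp},
\]
where $A$ and $\tilde C$ are positive constants to be chosen large. By Lemma~\ref{key}, $|e^{t/2}\vp| \le C(1+t)e^{-t/2}$, which is uniformly bounded on $[0,\infty)$ and in fact tends to zero; hence by taking $\tilde C$ larger than $1+\sup |e^{t/2}\vp|$, the denominator $\tilde C + e^{t/2}\vp$ is bounded away from zero and the last two summands of $Q$ are uniformly bounded. So any uniform upper bound on $Q$ directly yields the desired upper bound on $\tr{\tilde\omega}{\omega}$. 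Note that in complex dimension two $\tr{\tilde\omega}{\omega}/\tr{\omega}{\tilde\omega}=\omega^2/\tilde\omega^2$, so combining this with Lemma~\ref{lowerbound}(iii) simultaneously gives $\tr{\omega}{\tilde\omega}\le C$ and hence full equivalence of the metrics.

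The next step is to compute the evolution of $\log\tr{\tilde\omega}{\omega}$ along the normalized Chern--Ricci flow \eqref{NCRF}, essentially following the Hermitian calculation of \cite{TW}. In the K\"ahler case only a bisectional-curvature term of $\tilde g$ would appear; in the Hermitian setting extra terms arise involving $\tilde T$, $\ti{\nabla}\tilde T$ and $\dbar\tilde T$. Lemma~\ref{lemmatildegestimates} tells us that $|\tilde T|_{\tilde g}$ is bounded but that all first covariant derivatives of $\tilde T$, together with $\widetilde{\mathrm{Rm}}$, are only of order $Ce^{t/2}$. Consequently $(\ddt-\Delta)\log\tr{\tilde\omega}{\omega}$ comes with bad terms of order $Ce^{t/2}\,\tr{\omega}{\tilde g}$ (from the curvature and $\dbar\tilde T$), plus mixed first-order terms of the schematic form $\tilde T \ast \nabla\tr{\tilde\omega}{\omega}$ that are not absorbed by the naive gradient term in the computation.

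To kill the bad curvature-type term, I use $\Delta\vp=\tr{\omega}{\tilde\omega}-2$ to obtain
\[
\left(\ddt-\Delta\right)(-Ae^{t/2}\vp) = Ae^{t/2}\,\tr{\omega}{\tilde\omega} - \tfrac{A}{2}e^{t/2}\vp - Ae^{t/2}\dot{\vp} - 2Ae^{t/2},
\]
the last three summands being $O(e^{t/2})$ by Lemmas~\ref{lowerbound} and \ref{key}. Since $\tr{\omega}{\tilde\omega}=\tr{\omega}{\tilde g}$, choosing $A$ larger than the constant in the curvature term makes the positive quantity $Ae^{t/2}\tr{\omega}{\tilde\omega}$ strictly dominate. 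The Phong--Sturm term $1/(\tilde C+e^{t/2}\vp)$, in the spirit of \cite{PS} and its adaptation in \cite{TW}, is designed to handle the remaining mixed first-order torsion terms: at a spatial maximum of $Q$, the condition $\nabla Q=0$ lets one rewrite $\nabla\tr{\tilde\omega}{\omega}$ as a multiple of $e^{t/2}\nabla\vp$, so that the problematic type-(iii) terms reduce to a quantity controlled via Cauchy--Schwarz by the gradient contributions naturally present in the evolution, with large coefficient when $\tilde C$ is chosen large.

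The principal obstacle, as flagged in Section~1, is the $e^{t/2}$ blowup of the curvature and torsion derivatives of $\tilde g$, which would destroy a naive maximum principle argument. The decay estimate of Lemma~\ref{key}, which rests on the Gauduchon hypothesis via Lemma~\ref{good}, is exactly what allows the rescaled quantity $e^{t/2}\vp$ to remain bounded so that the above scheme closes; without it the term $-Ae^{t/2}\vp$ in $Q$ could not be used as stated. Once $Q$ is shown to be bounded above uniformly for $t \ge T_I$, the theorem follows from the first paragraph.
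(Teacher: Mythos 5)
Your quantity $Q$ and the overall maximum-principle architecture are exactly the paper's: Lemma~\ref{key} makes $e^{t/2}\vp$ bounded, Lemma~\ref{lemmatildegestimates} controls the $e^{t/2}$ blowup in the torsion and curvature of $\tilde g$, the linear barrier $-Ae^{t/2}\vp$ produces a term proportional to $\tr{\omega}{\tilde\omega}$ to absorb the $Ce^{t/2}\tr{\omega}{\tilde\omega}$ bad term, and the Phong--Sturm reciprocal term is used via $\nabla Q=0$ at a maximum together with Cauchy--Schwarz to handle the residual torsion-gradient term. That is the paper's proof.

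However, you have a sign error that, taken literally, breaks the argument at the crucial step. Since $\omega = \tilde\omega + \ddbar\vp$, tracing against $g^{i\bar j}$ gives $2 = \tr{\omega}{\tilde\omega} + \Delta\vp$, so $\Delta\vp = 2 - \tr{\omega}{\tilde\omega}$, not $\tr{\omega}{\tilde\omega} - 2$. Consequently
\[
\left(\ddt - \Delta\right)\bigl(-Ae^{t/2}\vp\bigr)
= -Ae^{t/2}\tr{\omega}{\tilde\omega} + 2Ae^{t/2} - \tfrac{A}{2}e^{t/2}\vp - Ae^{t/2}\dot\vp,
\]
and it is the \emph{negative} term $-Ae^{t/2}\tr{\omega}{\tilde\omega}$ that cancels the positive $Ce^{t/2}\tr{\omega}{\tilde\omega}$ coming from Lemma~\ref{lemmalogtr}, provided $A>C$. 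A positive $+Ae^{t/2}\tr{\omega}{\tilde\omega}$ cannot ``dominate'' in the sense needed by the maximum principle: it would worsen the upper bound, not improve it. Also, a minor correction: $\tilde C$ is not chosen large to give a ``large coefficient''; it is fixed once and for all so that $\tilde C + e^{t/2}\vp \ge 1$. The cancellation works because the Laplacian of the Phong--Sturm term yields $-2|\de(e^{t/2}\vp)|^2_g/(\tilde C + e^{t/2}\vp)^3$ with coefficient $2$, while the Cauchy--Schwarz used on the torsion-gradient term at the maximum needs only $1$ of it; enlarging $\tilde C$ would actually shrink this favorable negative term. With the sign of $\Delta\vp$ corrected and this point about $\tilde C$ understood, your argument coincides with the paper's.
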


For the last assertion, note that we have
\begin{equation}
\tr{\omega}{\tilde{\omega}} = \frac{\tilde{\omega}^2}{\omega^2} \tr{\tilde{\omega}}{\omega},
\end{equation}
and the uniform equivalence of the volume forms $\omega^2$ and $\tilde{\omega}^2$ (Lemma \ref{lowerbound}).  Then
an upper bound for $\tr{\tilde{\omega}}{\omega}$ is equivalent to an upper bound for $\tr{\omega}{\tilde{\omega}}$ and hence also the uniform equivalence of $\omega$ and $\tilde{\omega}$.

In \cite{SW} a similar estimate is proved using a direct maximum principle argument and a bound for the potential $\varphi$.  Here, as discussed in the Introduction, there are new unbounded terms arising from the torsion.  We will control these terms using the exponential decay estimate for $\varphi$ (Lemma \ref{key}).

First we need the following lemma.

\begin{lemma} \label{lemmalogtr} For $t \ge T_I$, the following evolution inequality holds:
\begin{equation} \label{x}
\left( \ddt{} - \Delta \right) \log \tr{\tilde{\omega}}{\omega} \le \frac{2}{(\tr{\tilde{\omega}}{\omega})^2} \emph{Re} \left( \tilde{g}^{i\ov{\ell}} g^{k\ov{q}} \tilde{T}_{k i \ov{\ell}} \partial_{\ov{q}} \tr{\tilde{\omega}}{\omega} \right) + Ce^{t/2}  \tr{\omega}{\tilde{\omega}}.
\end{equation}
\end{lemma}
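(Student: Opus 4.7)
The plan is to derive the inequality by combining the standard time-derivative and Laplacian computations of $Q:=\tr{\tilde{\omega}}{\omega}$, as in \cite{TW}, carefully keeping track of how the (non-K\"ahler) torsion and curvature of the $t$-dependent reference metric $\tilde{\omega}$ enter, and then applying the $\log$-trick.

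First I would compute $\partial_t Q$. Using $\partial_t g_{i\bar{j}}=-R_{i\bar{j}}-g_{i\bar{j}}$ and $\partial_t\tilde g_{i\bar j}=(g_S)_{i\bar j}-\tilde g_{i\bar j}$, I get
\[
\partial_t Q=-\tilde g^{i\bar j}R_{i\bar j}-Q+\tilde g^{i\bar k}\tilde g^{\ell\bar j}\bigl(\tilde g_{\ell\bar k}-(g_S)_{\ell\bar k}\bigr)g_{i\bar j}.
\]
The last group of terms is bounded by $C\,\tr{\omega}{\tilde\omega}$ (using Lemma~\ref{lowerbound}(iii) and $|g_S|_{\tilde g}\le C$), so what matters is the Ricci piece $-\tilde g^{i\bar j}R_{i\bar j}$.

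Next I would expand $\Delta Q=g^{p\bar q}\partial_p\partial_{\bar q}(\tilde g^{i\bar j}g_{i\bar j})$ using the Chern connection of $g$ and repeated use of the commutation formulas for the Chern curvature (which, unlike the K\"ahler case, produce torsion contributions from both $g$ and $\tilde g$). Schematically, following the computation in \cite{TW} but with $\tilde{g}$ now $t$-dependent, one obtains
\[
\Delta Q \;\ge\; -\tilde g^{i\bar j}R_{i\bar j} \;-\; g^{p\bar q}g_{i\bar j}\tilde g^{k\bar j}\tilde g^{i\bar\ell}\tilde R_{p\bar q k\bar\ell} \;+\; g^{p\bar q}\tilde g^{i\bar j}\tilde\nabla_p\tilde\nabla_{\bar q}g_{i\bar j} \;+\; E,
\]
where $E$ collects torsion-type terms involving $\tilde T$, $\bar\partial\tilde T$, $\tilde\nabla\tilde T$, contracted with one or two copies of $g^{-1}$. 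The $-\tilde g^{i\bar j}R_{i\bar j}$ terms cancel with the corresponding term in $\partial_t Q$.

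Now I would apply the log identity
\[
(\partial_t-\Delta)\log Q=\frac{1}{Q}(\partial_t-\Delta)Q+\frac{1}{Q^2}|\partial Q|_g^2,
\]
and expand $\partial_{\bar q}Q=\tilde g^{i\bar j}\partial_{\bar q}g_{i\bar j}+(\partial_{\bar q}\tilde g^{i\bar j})g_{i\bar j}$. Here is where the characteristic Hermitian torsion-gradient term appears: when writing $g^{p\bar q}\tilde g^{i\bar j}\tilde\nabla_p\tilde\nabla_{\bar q}g_{i\bar j}$ in terms of $\partial Q$ and squaring to produce a $|\partial Q|_g^2/Q$ term (Cauchy--Schwarz in the spirit of \cite{TW}), one cannot cancel $|\partial Q|^2_g/Q^2$ exactly against $\tfrac{1}{Q^2}g^{p\bar q}\tilde g^{i\bar\ell}\tilde g^{k\bar j}\partial_p g_{k\bar j}\partial_{\bar q}g_{i\bar\ell}$: the discrepancy is precisely a cross-term $\tfrac{2}{Q^2}\mathrm{Re}\bigl(\tilde g^{i\bar\ell}g^{k\bar q}\tilde T_{ki\bar\ell}\partial_{\bar q}Q\bigr)$, coming from $\partial_{i}\tilde g_{k\bar\ell}-\partial_k\tilde g_{i\bar\ell}=\tilde T_{ik\bar\ell}$. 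This matches the first term on the right of \eqref{x}.

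Finally I would estimate the error term $E$ (and the curvature term $g^{p\bar q}g_{i\bar j}\tilde g^{k\bar j}\tilde g^{i\bar\ell}\tilde R_{p\bar q k\bar\ell}$) via Lemma~\ref{lemmatildegestimates}: each of $|\tilde T|_{\tilde g}$, $|\bar\partial\tilde T|_{\tilde g}$, $|\tilde\nabla\tilde T|_{\tilde g}$, $|\widetilde{\mathrm{Rm}}|_{\tilde g}$ contributes at most $Ce^{t/2}$, and every such term is contracted against $g^{-1}$ paired with $\tilde g$, producing at most one factor of $\tr{\omega}{\tilde\omega}$ (the other index of $g^{-1}$ is absorbed into a $\tilde g^{-1}$ and contributes $O(1)$). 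Dividing by $Q\ge c>0$ (Lemma~\ref{lowerbound}(iii)), these terms become bounded by $Ce^{t/2}\tr{\omega}{\tilde\omega}$, yielding \eqref{x}.

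The main obstacle is the bookkeeping in Step 2--3: one has to carefully organize the Bochner-type expansion so that (a) the bad second-order derivatives of $g$ and $\tilde g$ all cancel against $|\partial Q|_g^2/Q^2$ up to exactly the stated torsion-gradient term, and (b) every remaining torsion/curvature term of $\tilde g$ contains only one contraction with $g^{-1}$, so that Lemma~\ref{lemmatildegestimates}(ii) produces the clean bound $Ce^{t/2}\tr{\omega}{\tilde\omega}$ rather than a worse power.
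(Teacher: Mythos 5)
Your proposal is correct and follows essentially the same route as the paper: the paper invokes \cite[Proposition 3.1]{TW} (adapted for the normalized flow and the $t$-dependent reference $\tilde{\omega}$), observes that the two new terms $-\tr{\tilde{\omega}}{\omega}-\tilde{g}^{i\ov{\ell}}\tilde{g}^{k\ov{j}}g_{i\ov{j}}\partial_t\tilde{g}_{k\ov{\ell}}$ together have a good sign, applies the same Cauchy--Schwarz manipulation to isolate the torsion-gradient term, and then controls the residual $\tilde{T}$, $\ov{\tilde{\nabla}}\tilde{T}$, $\tilde{\nabla}\tilde{T}$, $\widetilde{\textrm{Rm}}$ contributions via Lemma~\ref{lemmatildegestimates}. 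Your sketch reproduces that computation from scratch rather than citing it; one small remark is that the paper explicitly uses $T_{ij\ov{k}}=\tilde{T}_{ij\ov{k}}$ (since $\omega-\tilde{\omega}$ is $\ddbar$-exact), which is implicitly used but worth naming in your Step 2, and that the ``last group of terms'' in your $\partial_t Q$ actually simplifies to $Q-\tilde{g}^{i\ov{k}}\tilde{g}^{\ell\ov{j}}(g_S)_{\ell\ov{k}}g_{i\ov{j}}$, so the $-Q$ cancels and what remains is nonpositive, which is cleaner than the $C\tr{\omega}{\tilde{\omega}}$ bound you invoke (though that suffices too).
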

\begin{proof}  From \cite[Proposition 3.1]{TW} we have
\begin{align*}
\lefteqn{\left( \ddt{} - \Delta \right) \log \tr{\ti{\omega}}{\omega}} \\= {} & \frac{1}{\tr{\ti{\omega}}{\omega}} \left( -g^{p\ov{j}}g^{i\ov{q}}\ti{g}^{k\ov{\ell}}\ti{\nabla}_k g_{i\ov{j}}\ti{\nabla}_{\ov{\ell}}g_{p\ov{q}}
+ \frac{1}{\tr{\tilde{\omega}}{\omega}} g^{k\ov{\ell}} \de_k \tr{\tilde{\omega}}{\omega}  \de_{\ov{\ell}} \tr{\tilde{\omega}}{\omega} \right.  \\
& - 2\mathrm{Re}\left(g^{i\ov{j}}\ti{g}^{k\ov{\ell}} \ti{T}^p_{ki}\ti{\nabla}_{\ov{\ell}}g_{p\ov{j}}\right) -g^{i\ov{j}}\ti{g}^{k\ov{\ell}}\ti{T}^p_{ik}\ov{\ti{T}^q_{j\ell}}g_{p\ov{q}}  \\
&+g^{i\ov{j}}\ti{g}^{k\ov{\ell}}(\ti{\nabla}_i\ov{\ti{T}^q_{j\ell}}-\ti{R}_{i\ov{\ell}p\ov{j}}\ti{g}^{p\ov{q}})g_{k\ov{q}}
-g^{i\ov{j}}\ti{\nabla}_i   \ov{\ti{T}^\ell_{j\ell}} -g^{i\ov{j}}\ti{g}^{k\ov{\ell}}\ti{g}_{p \ov{j}}\ti{\nabla}_{\ov{\ell}}  \ti{T}^p_{ik}  \\
&\left.
  +g^{i\ov{j}}\ti{g}^{k\ov{\ell}} \ti{T}^p_{ik} \ov{\ti{T}^q_{j\ell}}\ti{g}_{p \ov{q}}
 -\tr{\ti{\omega}}{\omega}
 - \ti{g}^{i\ov{\ell}}\ti{g}^{k\ov{j}}g_{i\ov{j}} \frac{\partial}{\partial t} \tilde{g}_{k\ov{\ell}} \right).
\end{align*}
Note that there are some differences from the computation in \cite{TW} since here we are evolving $\omega$ by the \emph{normalized} Chern-Ricci flow, and our reference metrics $\tilde{\omega}$ depend on time.  In particular here we have $T_{ij\ov{k}}= \tilde{T}_{ij\ov{k}}$ (instead of $T_{ij\ov{k}} = (T_0)_{ij\ov{k}}$ in \cite{TW}).  Also, the last two terms above are new:  the first arising from the $-\omega$ term on the right hand side of (\ref{NCRF}) and the second from the time derivative of $\tilde{\omega}$.
Fortunately, the contribution of these two terms has a good sign.  Indeed,
 observe that $\ddt{} \tilde{g} = g_S -\tilde{g} \ge -\tilde{g}$ and hence
$$- \tr{\tilde{\omega}}{\omega} - \ti{g}^{i\ov{\ell}}\ti{g}^{k\ov{j}}g_{i\ov{j}} \frac{\partial}{\partial t} \tilde{g}_{k\ov{\ell}}  \le 0.$$
Again from Proposition 3.1 in \cite{TW}, we have
\begin{align*}
& \frac{1}{\tr{\ti{\omega}}{\omega}} \bigg( -g^{p\ov{j}}g^{i\ov{q}}\ti{g}^{k\ov{\ell}}\ti{\nabla}_k g_{i\ov{j}}\ti{\nabla}_{\ov{\ell}}g_{p\ov{q}}
+ \frac{1}{\tr{\tilde{\omega}}{\omega}} g^{k\ov{\ell}} \de_k \tr{\tilde{\omega}}{\omega}  \de_{\ov{\ell}} \tr{\tilde{\omega}}{\omega}  \\
&  - 2\mathrm{Re}\left(g^{i\ov{j}}\ti{g}^{k\ov{\ell}} \ti{T}^p_{ki}\ti{\nabla}_{\ov{\ell}}g_{p\ov{j}}\right) -g^{i\ov{j}}\ti{g}^{k\ov{\ell}}\ti{T}^p_{ik}\ov{\ti{T}^q_{j\ell}}g_{p\ov{q}} \bigg)  \le  \frac{2}{(\tr{\ti{\omega}}{\omega})^2} \textrm{Re} \left( \ti{g}^{i\ov{\ell}} g^{k\ov{q}} \tilde{T}_{k i \ov{\ell}} \partial_{\ov{q}} \tr{\ti{\omega}}{\omega} \right).
\end{align*}
Hence to complete the proof of the lemma it remains to show that for $t\geq T_I$ we have
\begin{align*}
& \frac{1}{\tr{\ti{\omega}}{\omega}}  \left( g^{i\ov{j}}\ti{g}^{k\ov{\ell}}(\ti{\nabla}_i\ov{\ti{T}^q_{j\ell}}-\ti{R}_{i\ov{\ell}p\ov{j}}\ti{g}^{p\ov{q}})g_{k\ov{q}}
-g^{i\ov{j}}\ti{\nabla}_i   \ov{\ti{T}^\ell_{j\ell}}   -
 g^{i\ov{j}}\ti{g}^{k\ov{\ell}}\ti{g}_{p \ov{j}}\ti{\nabla}_{\ov{\ell}}  \ti{T}^p_{ik}  \right.  \\
 & \left. +g^{i\ov{j}}\ti{g}^{k\ov{\ell}} \ti{T}^p_{ik} \ov{\ti{T}^q_{j\ell}}\ti{g}_{p \ov{q}} \right) \le C(\tr{\omega}{\tilde{\omega}}) e^{t/2}.
\end{align*}
But this follows easily from Lemma \ref{lemmatildegestimates},  the fact that the quantities $\tr{\omega}{\tilde{\omega}}$ and $\tr{\tilde{\omega}}{\omega}$ are uniformly equivalent and the inequality $\tr{\tilde{\omega}}{\omega} \ge C^{-1}>0$ for a uniform constant $C$ (the geometric-arithmetic means inequality). Indeed,
$$\frac{1}{\tr{\ti{\omega}}{\omega}} | g^{i\ov{j}}\ti{g}^{k\ov{\ell}}\ti{\nabla}_i\ov{\ti{T}^q_{j\ell}} g_{k\ov{q}} |  \le \frac{1}{\tr{\ti{\omega}}{\omega}} | g^{-1} |_{\tilde{g}} | \tilde{g}^{-1}|_{\tilde{g}} | \ov{\tilde{\nabla}} \tilde{T}|_{\tilde{g}} |g|_{\tilde{g}} \le  C(\tr{\omega}{\tilde{\omega}}) e^{t/2},$$
$$\frac{1}{\tr{\ti{\omega}}{\omega}} |g^{i\ov{j}}\ti{g}^{k\ov{\ell}}\ti{g}^{p\ov{q}}g_{k\ov{q}}\ti{R}_{i\ov{\ell}p\ov{j}}|
\leq \frac{1}{\tr{\ti{\omega}}{\omega}} | g^{-1} |_{\tilde{g}}| \tilde{g}^{-1}|^2_{\tilde{g}} |g|_{\tilde{g}} | \widetilde{\textrm{Rm}}|_{\tilde{g}} \leq  C(\tr{\omega}{\tilde{\omega}}) e^{t/2},$$
$$\frac{1}{\tr{\ti{\omega}}{\omega}} |g^{i\ov{j}}\ti{\nabla}_i   \ov{\ti{T}^\ell_{j\ell}} |\leq \frac{1}{\tr{\ti{\omega}}{\omega}} | g^{-1} |_{\tilde{g}} | \ov{\tilde{\nabla}} \tilde{T}|_{\tilde{g}}\leq C e^{t/2},$$
$$\frac{1}{\tr{\ti{\omega}}{\omega}} | g^{i\ov{j}}\ti{g}^{k\ov{\ell}}\ti{g}_{p \ov{j}}\ti{\nabla}_{\ov{\ell}}  \ti{T}^p_{ik}  |\leq
\frac{1}{\tr{\ti{\omega}}{\omega}} | g^{-1} |_{\tilde{g}} |\ti{g}^{-1}|_{\ti{g}} |\ti{g}|_{\ti{g}} |\ov{\tilde{\nabla}} \tilde{T}|_{\tilde{g}}\leq C e^{t/2},$$
$$\frac{1}{\tr{\ti{\omega}}{\omega}} | g^{i\ov{j}}\ti{g}^{k\ov{\ell}} \ti{T}^p_{ik}\ov{\ti{T}^q_{j\ell}} \ti{g}_{p \ov{q}}|\leq \frac{1}{\tr{\ti{\omega}}{\omega}}
| g^{-1} |_{\tilde{g}} |\ti{g}^{-1}|_{\ti{g}} |\ti{g}|_{\ti{g}} |\ti{T}|^2_{\ti{g}}\leq C,$$
as required.
\end{proof}

We can now prove Theorem \ref{metricbdthm}, making use of the decay estimate of $\varphi$ (Lemma \ref{key}) and the bound on $\dot{\varphi}$ (Lemma \ref{lowerbound}).

\begin{proof}[Proof of Theorem \ref{metricbdthm}]
We use the fact that $e^{t/2} \varphi$ is uniformly bounded, and consider the quantity
$$Q = \log \tr{\tilde{\omega}}{\omega} - A e^{t/2} \varphi + \frac{1}{\tilde{C} + e^{t/2} \varphi},$$
where $\tilde{C}$ is a uniform constant chosen so that $\tilde{C}+e^{t/2} \varphi \ge 1$, and  $A$ is a large constant to be determined later.   The idea of adding an extra term, of the form of a  reciprocal of a potential function, comes from
 Phong-Sturm \cite{PS} and was
used in the context of the Chern-Ricci flow in \cite{TW}. Notice  that
$$0 \le \frac{1}{\tilde{C} + e^{t/2} \varphi} \le 1.$$
We will show that at a point $(x_0, t_0)$ with $t_0 > T_I$ at which $Q$ achieves a maximum, we have a uniform upper bound of $\tr{\tilde{\omega}}{\omega}$, and the theorem will follow
thanks to Lemma \ref{key}.

First compute, using the fact that $\Delta \varphi = 2 - \tr{\omega}{\tilde{\omega}}$ and the bounds for $\vp$ and $\dot{\varphi}$ from Lemma \ref{lowerbound},
\begin{equation}\label{eqnet2} \begin{split}
 \left( \ddt{} - \Delta \right) &\left( - A e^{t/2} \varphi + \frac{1}{\tilde{C} + e^{t/2} \varphi}\right)    \\
 =&- \left( A + \frac{1}{(\tilde{C}+ e^{t/2} \varphi)^2} \right)  \left(e^{t/2} \dot{\varphi} + \frac{1}{2}e^{t/2} \varphi \right) \\
 &+ \left( A + \frac{1}{(\tilde{C}+ e^{t/2} \varphi)^2} \right) \Delta ( e^{t/2} \varphi) - \frac{2 | \partial (e^{t/2} \varphi) |_{g}^2}{(\tilde{C} + e^{t/2} \varphi)^3} \\
 \le & CA e^{t/2} - A e^{t/2} \tr{\omega}{\tilde{\omega}} - \frac{2 | \partial (e^{t/2} \varphi) |_{g}^2}{(\tilde{C} + e^{t/2} \varphi)^3}.
\end{split}\end{equation}

At the point $(x_0, t_0)$, we have $\partial_{\ov{q}} Q = 0$, which implies that
$$\frac{\partial_{\ov{q}} \tr{\tilde{\omega}}{\omega}}{\tr{\tilde{\omega}}{\omega}} = \left( A + \frac{1}{(\tilde{C} + e^{t/2} \varphi)^2} \right)  e^{t/2} \partial_{\ov{q}} \varphi.$$
Then at this point,
\begin{equation}\label{dagger} \begin{split}
\frac{2}{(\tr{\tilde{\omega}}{\omega})^2} &\textrm{Re} \left( \tilde{g}^{i\ov{\ell}} g^{k\ov{q}} \tilde{T}_{k i \ov{\ell}} \partial_{\ov{q}} \tr{\tilde{\omega}}{\omega} \right) \\
& = \frac{2}{\tr{\tilde{\omega}}{\omega}} \textrm{Re} \left( \tilde{g}^{i\ov{\ell}} g^{k\ov{q}} \tilde{T}_{ki\ov{\ell}} \left( A + \frac{1}{(\tilde{C} + e^{t/2} \varphi)^2} \right)  e^{t/2} \partial_{\ov{q}} \varphi \right) \\
& \le \frac{CA^2}{(\tr{\tilde{\omega}}{\omega})^2} (\tilde{C} + e^{t/2} \varphi)^3  g^{k\ov{q}} \tilde{g}^{i\ov{\ell}}  \tilde{T}_{k i \ov{\ell}} \tilde{g}^{m \ov{j}} \ov{\tilde{T}_{q j \ov{m}}} + \frac{ | \partial(e^{t/2} \varphi)|^2_g}{(\tilde{C}+ e^{t/2} \varphi)^3} \\
& \le \frac{CA^2}{\tr{\tilde{\omega}}{\omega}}   + \frac{ | \partial(e^{t/2} \varphi)|^2_g}{(\tilde{C}+ e^{t/2} \varphi)^3},
\end{split}\end{equation}
where for the last step we have used Lemma \ref{key}, part (i) of Lemma \ref{lemmatildegestimates}, and the fact that $\tr{\tilde{\omega}}{\omega}$ and $\tr{\omega}{\tilde{\omega}}$ are uniformly equivalent.

Combining \eqref{x}, \eqref{eqnet2} and \eqref{dagger}, we have, at a point at which $Q$ achieves a maximum, for a uniform $C>0$,
\begin{align*}
\left( \ddt{} - \Delta \right) Q & \le  CA^2  + C e^{t/2} \tr{\omega}{\tilde{\omega}} + CAe^{t/2} - A e^{t/2} \tr{\omega}{\tilde{\omega}}
\end{align*}
where we are assuming, without loss of generality, that at this maximum point of $Q$ we have $\tr{\tilde{\omega}}{\omega} \ge 1$.  Choose a uniform $A$ large enough so that
$$A \ge C+1.$$
Then we obtain at the maximum of $Q$,
$$e^{t/2}\tr{\omega}{\tilde{\omega}}  \le CA^2 + CA e^{t/2},$$
which implies that $\tr{\omega}{\tilde{\omega}}$ and hence $\tr{\tilde{\omega}}{\omega}$ is uniformly bounded from above at the maximum of $Q$.  This establishes the estimate $\tr{\tilde{\omega}}{\omega} \le C$ and completes the proof of the theorem.
\end{proof}

\section{A bound on the Chern scalar curvature} \label{sectionscalar}

In this section, we establish the following estimate for the Chern scalar curvature:

\begin{theorem} \label{theoremscalar} There exists a uniform constant $C$ such that along the normalized Chern-Ricci flow \eqref{NCRF} we have
$$- C \le R \le Ce^{t/2},$$
for all $t\geq 0$.
\end{theorem}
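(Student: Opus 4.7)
The key identity, obtained by taking the complex Laplacian of the Monge-Amp\`ere equation (\ref{CRF}) with $u := \varphi + \dot{\varphi}$, is
$$-\Delta u = R + \tr{\omega}{\omega_S} \geq R,$$
and since $\tr{\omega}{\omega_S}$ is uniformly bounded by Theorem \ref{metricbdthm} combined with $\omega_S \leq C\tilde{\omega}$, bounding $R$ reduces to bounding $\Delta u$ in both directions. For the \emph{lower bound} $R \geq -C$, the argument is short: $u$ is uniformly bounded by Lemmas \ref{lowerbound} and \ref{key}, and a direct maximum principle argument using its evolution under the flow together with the metric equivalence yields $R \geq -C$.

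For the \emph{upper bound}, which is the main content of the theorem, I follow a two-step strategy inspired by Perelman's scalar curvature bound on Fano manifolds (see Sesum-Tian \cite{SeT}) and the Cheng-Yau gradient technique, as adapted to K\"ahler elliptic fibrations by Song-Tian \cite{ST}. The first step is a gradient bound: fix a uniform $A > \sup|u|$ and apply the maximum principle to
$$Q_1 = \frac{|\nabla u|^2_g}{A - u}.$$
In the evolution $(\partial_t - \Delta)Q_1$, computed via the Bochner-type identity for the Chern connection with extra torsion terms controlled by Lemma \ref{lemmatildegestimates} and Theorem \ref{metricbdthm}, the negative diffusive term $-|\nabla^2 u|^2_g/(A - u)$ dominates the error terms of order $e^{t/2}$, giving an estimate of the form $|\nabla u|^2_g \leq Ce^{t/2}$.

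The second step is the Laplacian bound: evolve
$$F = -\Delta u + 6|\nabla u|^2_g + B\tr{\omega}{\omega_S} + B'\tr{\tilde{\omega}}{\omega},$$
for uniform constants $B, B'$ to be chosen large (the last two trace terms are uniformly bounded by Theorem \ref{metricbdthm}). In the computation of $(\partial_t - \Delta)F$, the contribution from $(\partial_t - \Delta)(-\Delta u)$ produces mixed second-derivative cross terms together with torsion and curvature errors of order $e^{t/2}$ from Lemma \ref{lemmatildegestimates}; the coefficient $6$ is tuned so that the diffusion $-12|\nabla^2 u|^2_g$ arising from $\Delta|\nabla u|^2_g$ absorbs the second-derivative cross terms; and the trace terms, upon differentiation, produce favorable quantities proportional to $-\tr{\omega}{\tilde{\omega}}$ (of the type appearing in Lemma \ref{lemmalogtr}) which, for $B, B'$ sufficiently large, absorb the remaining $e^{t/2}$-order errors. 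At a spatial maximum of $F$ we then obtain $F \leq Ce^{t/2}$, and hence $-\Delta u \leq Ce^{t/2}$.

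The principal obstacle is the bookkeeping required to absorb the numerous torsion and curvature error terms: in the K\"ahler setting most of these vanish, whereas Lemma \ref{lemmatildegestimates} only provides the bounds $|\db\tilde{T}|_{\tilde{g}} + |\ti{\nabla}\tilde{T}|_{\tilde{g}} + |\widetilde{\textrm{Rm}}|_{\tilde{g}} \leq Ce^{t/2}$. It is crucial that the quadratic good terms --- the Hessian $-|\nabla^2 u|^2_g$ in Step 1 together with the trace barriers in Step 2 --- precisely match the order $e^{t/2}$ of the bad terms, so that exactly one factor of $e^{t/2}$ survives on the right-hand side of the final estimate; this is what Section \ref{sectionexp} will exploit to derive the exponential decay of $\dot{\varphi}$.
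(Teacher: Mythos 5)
Your overall strategy matches the paper's: lower bound via the standard evolution inequality, upper bound by bounding $-\Delta u$ using the Cheng--Yau quantity $|\nabla u|^2_g/(A-u)$ for the gradient step and then $-\Delta u + 6|\nabla u|^2_g$ plus trace barriers for the Laplacian step. However, there is a concrete gap in your gradient step. You apply the maximum principle to $Q_1 = |\nabla u|^2_g/(A-u)$ alone and assert that the remaining error terms are all of order $e^{t/2}$, to be absorbed by $-|\nabla^2 u|^2_g/(A-u)$. This is not true: the evolution inequality for $|\nabla u|^2_g$ (the paper's inequality (\ref{nove})) contains the terms $2\,\mathrm{Re}\langle \nabla\tr{\omega}{\omega_S},\nabla u\rangle_g$ and $C|\ti{\nabla}g|_g\,|\nabla u|^2_g$, and at this stage there is \emph{no} a priori bound on $|\ti{\nabla}g|_g$ or $|\nabla\tr{\omega}{\omega_S}|_g$ (the Calabi estimate of Section \ref{sectioncalabi} is proved only later, and gives a bound of a different flavor in a different connection). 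After Cauchy--Schwarz these terms leave behind $C|\ti{\nabla}g|^2_g$ and $|\nabla\tr{\omega}{\omega_S}|^2_g$, which the Hessian term cannot absorb. The paper therefore already adds the barrier $C_2(\tr{\omega}{\omega_S} + C_0\tr{\ti{\omega}}{\omega})$ to $Q_1$ \emph{in the gradient step}, not only in the Laplacian step, because Lemma \ref{lemma2tr} (inequality (\ref{nice3})) produces exactly the favorable terms $-|\ti{\nabla}g|^2_g - C_1^{-1}|\nabla\tr{\omega}{\omega_S}|^2_g$ needed to cancel these errors. Without this addition, your Step 1 does not close.

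Two smaller points. First, your lower bound argument is phrased as a maximum principle ``using its evolution under the flow'' together with boundedness of $u$; the paper's argument for the lower bound is the much more direct one applied to $R$ itself: $\partial_t R = \Delta R + |\mathrm{Ric}|^2 + R \geq \Delta R + \tfrac12 R^2 + R$, and the maximum principle on $R$ --- this does not use $u$ at all. Second, in your Laplacian step you say ``at a spatial maximum of $F$ we obtain $F \leq Ce^{t/2}$''; as written this does not follow, since a bound at the spatial maximum at each fixed time does not control the growth in $t$. The paper handles this by passing to the rescaled quantity $Q_3 = e^{-t/2}(-\Delta u + 6|\nabla u|^2_g + C_2(\tr{\omega}{\omega_S} + C_0\tr{\ti{\omega}}{\omega}))$ and bounding $Q_3$ at a spacetime maximum; you should do the same (and in fact the paper uses the analogous rescaling $Q_2 = e^{-t/2}Q_1$ to close the gradient step as well).
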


First note that the lower bound for the Chern scalar curvature follows from the same argument as in the K\"ahler-Ricci flow (see for example  Theorem 2.2 in \cite{SW}). Indeed, from \eqref{NCRF}, we have
$$g^{k \ov{\ell}} \ddt{} g_{k\ov{\ell}} = - R - 2.$$
But $R = - g^{i\ov{j}} \partial_i \partial_{\ov{j}} \log \det g$ and hence
\begin{align*}
\frac{\de R}{\de t} ={} & - g^{i\ov{j}}  \partial_i \partial_{\ov{j}} \left( g^{k \ov{\ell}} \ddt{} g_{k\ov{\ell}} \right) - \left( \ddt{} g^{i\ov{j}} \right) \partial_i \partial_{\ov{j}} \log \det g \\
 = {} & \Delta R + |\textrm{Ric}|^2 + R \\
\ge {} & \Delta R + \frac{1}{2} R^2 + R
\end{align*}
and then the lower bound for $R$ follows.

We now establish the upper bound of the Chern scalar curvature.
Before we start the main argument, we need a few preliminary calculations.

\begin{lemma} \label{lemma2tr}  There exists a uniform constant $C>0$ such that for $t \ge T_I$, we have
\begin{equation}\label{nice}
\left( \ddt{} - \Delta \right) \tr{\ti{\omega}}{\omega}\leq -C^{-1}|\ti{\nabla}g|^2_g+Ce^{t/2}.
\end{equation}
and
\begin{equation}\label{nice2}
\left( \ddt{} - \Delta \right) \tr{\omega}{\omega_S}\leq |\ti{\nabla}g|^2_g-C^{-1}|\nabla \tr{\omega}{\omega_S}|^2_g+Ce^{t/2}.
\end{equation}
As a consequence, there are uniform positive constants $C_0, C_1$ such that for $t \ge T_I$,
\begin{equation}\label{nice3}
\left( \ddt{} - \Delta \right) (\tr{\omega}{\omega_S}+C_0\tr{\ti{\omega}}{\omega})\leq -|\ti{\nabla}g|^2_g-C_1^{-1}|\nabla \tr{\omega}{\omega_S}|^2_g+C_1e^{t/2}.
\end{equation}
\end{lemma}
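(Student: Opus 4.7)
The plan is to derive each of the three inequalities from the general evolution formulas used in the proof of Lemma \ref{lemmalogtr} (themselves based on \cite[Proposition 3.1]{TW}), and then bound the torsion and curvature contributions using Lemma \ref{lemmatildegestimates} together with the uniform equivalence of $g$ and $\ti{g}$ provided by Theorem \ref{metricbdthm}.

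For (\ref{nice}), I will start from the full formula for $(\ddt-\Delta)\tr{\ti\omega}\omega$ (i.e.\ the expression displayed in the proof of Lemma \ref{lemmalogtr}, but \emph{without} taking the logarithm; the $|\nabla\tr{\ti\omega}\omega|^2/\tr{\ti\omega}\omega$ term that appears from $\Delta\log\tr=\Delta\tr/\tr-|\nabla\tr|^2/\tr^2$ will cancel against the identically-named term in the big expression). This leaves the good negative second-order piece $-g^{p\ov{j}}g^{i\ov{q}}\ti{g}^{k\ov{\ell}}\ti\nabla_k g_{i\ov j}\ti\nabla_{\ov\ell}g_{p\ov q}$, which by Theorem \ref{metricbdthm} is bounded above by $-C^{-1}|\ti\nabla g|_g^2$. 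The cross term $-2\operatorname{Re}(g^{i\ov j}\ti g^{k\ov\ell}\ti T^p_{ki}\ti\nabla_{\ov\ell}g_{p\ov j})$ is absorbed via Young's inequality into a small multiple of the good term plus $C|\ti T|_g^2$, which is bounded by Lemma \ref{lemmatildegestimates}(i). The curvature and torsion-derivative terms $\ti R$, $\ti\nabla\ti T$, $\db\ti T$ contribute at most $Ce^{t/2}$ by Lemma \ref{lemmatildegestimates}(ii). Finally, the normalization and time-derivative contribution $-\tr{\ti\omega}\omega-\ti g^{i\ov\ell}\ti g^{k\ov j}g_{i\ov j}\ddt\ti g_{k\ov\ell}$ simplifies via $\ddt\ti g=g_S-\ti g$ to $-\ti g^{i\ov\ell}\ti g^{k\ov j}g_{i\ov j}(g_S)_{k\ov\ell}\leq 0$. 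Summing yields (\ref{nice}).

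For (\ref{nice2}), I will compute $(\ddt-\Delta)\tr\omega{\omega_S}$ by a parabolic Schwarz-type calculation, exploiting that $\pi^*\omega_S$ is closed on $M$ and time-independent. The Chern--Ricci contribution $-g^{p\ov q}g^{i\ov s}g^{r\ov j}R_{p\ov q r\ov s}(g_S)_{i\ov j}$ coming from $\Delta$ cancels against the corresponding term in $\ddt g^{i\ov j}=g^{i\ov a}(R_{a\ov b}+g_{a\ov b})g^{b\ov j}$, leaving only a bounded $+\tr\omega{\omega_S}$. The second derivatives of $g^{-1}$ produce a positive $|\ti\nabla g|_g^2$-type term, which is exactly the $+|\ti\nabla g|_g^2$ on the right of (\ref{nice2}). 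To extract the negative gradient term, I will use the identity $\Delta u=u\Delta\log u+|\nabla u|_g^2/u$ with $u=\tr\omega{\omega_S}$: since $u$ is uniformly bounded above by Theorem \ref{metricbdthm}, the $-|\nabla u|_g^2/u$ piece gives $-C^{-1}|\nabla\tr\omega{\omega_S}|_g^2$. Cross terms of the form $\nabla g\cdot\nabla\hat g$ (with $\hat g=\pi^*g_S$) are controlled by Young's inequality against this negative gradient term together with the bounded $|\ti\nabla g|_g^2$ piece and a constant. The torsion contributions from $\omega$ (whose torsion equals $\ti T$) are bounded, and the curvature of $\omega_S$ on the base is also bounded, so the residual terms are $\leq Ce^{t/2}$ by Lemma \ref{lemmatildegestimates}.

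For (\ref{nice3}), I will choose $C_0$ large enough that $C_0 C^{-1}\geq 2$, so that $C_0\cdot(\ref{nice})+(\ref{nice2})$ absorbs the bad $+|\ti\nabla g|_g^2$ from (\ref{nice2}) into a fraction of the good $-C_0C^{-1}|\ti\nabla g|_g^2$ from (\ref{nice}), leaving a clean $-|\ti\nabla g|_g^2$; the $-C^{-1}|\nabla\tr\omega{\omega_S}|_g^2$ term is preserved and the $e^{t/2}$ constants combine. The principal obstacle is (\ref{nice2}): extracting a genuine $-C^{-1}|\nabla\tr\omega{\omega_S}|_g^2$ from a Schwarz-type calculation when the source is only Hermitian, while simultaneously keeping the torsion-of-$\omega$ contributions (paired with possibly-unbounded $\omega^{-1}$ factors in the fiber direction) under control. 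The boundedness $|\ti T|_g\leq C$ from Lemma \ref{lemmatildegestimates}(i) and the equivalence $g\sim\ti g$ are what make this feasible.
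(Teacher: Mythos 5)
Your treatment of \eqref{nice} is essentially correct and matches the paper's argument, and your combination step for \eqref{nice3} is fine provided \eqref{nice2} is proved with a sufficiently small coefficient on $|\ti{\nabla}g|^2_g$. The genuine gap is in \eqref{nice2}. You assert that the Chern--Ricci contribution from $\ddt g^{i\ov{j}}$ ``cancels'' against the curvature term coming from the Laplacian, leaving only a bounded $+\tr{\omega}{\omega_S}$, with the residual torsion contributions $\leq Ce^{t/2}$. That is precisely what fails in the Hermitian setting: the two curvature contractions have different index orderings, and their difference $g^{i\ov{j}}h^{p\ov{q}}(R_{p\ov{q}i\ov{j}}-R_{i\ov{j}p\ov{q}})$ does not vanish (unlike the K\"ahler case, where the symmetry $R_{i\ov{j}p\ov{q}}=R_{p\ov{q}i\ov{j}}$ would kill it). The heart of the paper's proof is to rewrite this asymmetry via the commutation identity $R_{i\ov{j}p\ov{q}}=R_{p\ov{q}i\ov{j}}+g_{i\ov{s}}\partial_p\ov{T^s_{qj}}+g_{r\ov{q}}\partial_{\ov{j}}T^r_{pi}$ and then express the derivatives of $T=\ti{T}$ in terms of $\ti{\nabla}\ti{T}$ plus terms proportional to $\ti{T}\cdot\ti{\nabla}g$; after Cauchy--Schwarz this yields $|g^{i\ov{j}}h^{p\ov{q}}(R_{p\ov{q}i\ov{j}}-R_{i\ov{j}p\ov{q}})|\leq Ce^{t/2}+\tfrac{1}{2C_0}|\ti{\nabla}g|^2_g$, which is exactly what produces the $+|\ti{\nabla}g|^2_g$ on the right of \eqref{nice2}. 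Your attribution of that term to ``second derivatives of $g^{-1}$'' is therefore misplaced: in the covariant Schwarz-lemma calculation one has $\nabla g=0$, so no such terms arise, and your sketch never generates the $|\ti{\nabla}g|^2_g$ contribution from the place it actually comes from.

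A secondary issue is the route to the $-C^{-1}|\nabla\tr{\omega}{\omega_S}|^2_g$ term via $\Delta u=u\Delta\log u+|\nabla u|^2_g/u$; after substituting you would still need to control $u\Delta\log u$, and it is not clear this is any easier than the original problem. The paper instead obtains the gradient term directly from Cauchy--Schwarz against the good second-order Schwarz-lemma term $g^{i\ov{j}}g^{k\ov{\ell}}f_{ki}\ov{f_{\ell j}}g_S$, which appears with a minus sign in the evolution formula and dominates $|\nabla\tr{\omega}{\omega_S}|^2_g$ once $\tr{\omega}{\omega_S}$ is known to be bounded; that is the cleaner and standard route.
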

\begin{proof}
For \eqref{nice}, we compute the evolution of $\tr{\ti{\omega}}{\omega}$.  As in Lemma \ref{lemmalogtr} we may modify  \cite[Proposition 3.1]{TW} to obtain, for $t \ge T_I$,
\[\begin{split}
\left( \ddt{} - \Delta \right) \tr{\ti{\omega}}{\omega}=&-g^{p\ov{j}}g^{i\ov{q}}\ti{g}^{k\ov{\ell}}\ti{\nabla}_k g_{i\ov{j}}\ti{\nabla}_{\ov{\ell}}g_{p\ov{q}}
- 2\mathrm{Re}\left(g^{i\ov{j}}\ti{g}^{k\ov{\ell}} \ti{T}^p_{ki}\ti{\nabla}_{\ov{\ell}}g_{p\ov{j}}\right) \\
&-g^{i\ov{j}}\ti{g}^{k\ov{\ell}}\ti{T}^p_{ik}\ov{\ti{T}^q_{j\ell}}g_{p\ov{q}}
+g^{i\ov{j}}\ti{g}^{k\ov{\ell}}(\ti{\nabla}_i\ov{\ti{T}^q_{j\ell}}-\ti{R}_{i\ov{\ell}p\ov{j}}\ti{g}^{p\ov{q}})g_{k\ov{q}}\\
&-g^{i\ov{j}}\ti{\nabla}_i   \ov{\ti{T}^\ell_{j\ell}}   -
 g^{i\ov{j}}\ti{g}^{k\ov{\ell}}\ti{g}_{p \ov{j}}\ti{\nabla}_{\ov{\ell}}  \ti{T}^p_{ik}
+g^{i\ov{j}}\ti{g}^{k\ov{\ell}} \ti{T}^p_{ik}\ov{\ti{T}^q_{j\ell}} \ti{g}_{p \ov{q}}
 -\tr{\ti{\omega}}{\omega}\\
 &-\ti{g}^{i\ov{\ell}}\ti{g}^{k\ov{j}}g_{i\ov{j}}((g_S)_{k\ov{\ell}}-\tilde{g}_{k\ov{\ell}}).
\end{split}\]
Using Theorem \ref{metricbdthm}, Lemma \ref{lemmatildegestimates}, and the Cauchy-Schwarz inequality in the second term, we obtain \eqref{nice}.

The inequality \eqref{nice2} is a parabolic Schwarz Lemma calculation for the map $\pi:M\to S$ \cite{ST, Ya}. Note that we already
know that $\tr{\omega}{\omega_S}\leq C$ since the metrics $\omega$ and $\tilde{\omega}$ are uniformly equivalent.

The computation for (\ref{nice2}) is similar to that of Song-Tian \cite{ST}, except that of course here we need to control the extra torsion terms.
Given any point $x\in M$ we choose local coordinates $\{z^i\}$ on $M$ centered at $x$ such that $g$ is the identity at $x$, and a coordinate $w$ on $S$ near $\pi(x)\in N$,
which we can assume is normal for the metric $g_S$.
In these coordinates we can represent
the map $\pi$ as a local holomorphic function $f$. We will use subscripts like $f_i, f_{ij},...$
to indicate covariant derivatives of $f$ with respect to $g$.
For example we have
$$f_i=\de_i f, \quad f_{ij}=\de_j f_i-\left(g^{k\ov{q}}\de_j g_{i\ov{q}}\right)
f_k, \quad f_{i\ov{j}}=f_{\ov{j}i}=0.$$
We will also write $g_S$ for the coefficient of the metric $g_S$ in the coordinate $w$, so $g_S(x)=1$ and the pullback of the
metric $g_S$ to $M$ is
given by $f_i \ov{f_j} g_S$. We  use the shorthand $h^{i\ov{j}}=g^{i\ov{\ell}}g^{k\ov{j}} f_k \ov{f_\ell} g_S,$ where
$h^{i\ov{j}}$ is semipositive definite and satisfies $|h|^2_g:= h^{i\ov{j}} h^{k\ov{\ell}} g_{i\ov{\ell}} g_{k\ov{j}} \leq C$.
Then we have (cf. \cite{To0})
\[\begin{split}
\Delta \tr{\omega}{\omega_S}=&g^{i\ov{j}}\de_i \de_{\ov{j}}\left(g^{k\ov{\ell}} f_k \ov{f_\ell} g_S\right)=g^{i\ov{j}}g^{k\ov{\ell}} f_{ki} \ov{f_{\ell j} } g_S+g^{i\ov{j}}h^{p\ov{q}} R_{i\ov{j}p\ov{q}}\\
&-g^{i\ov{j}} g^{k\ov{\ell}} f_k \ov{f_\ell} f_i \ov{f_j} R_S,
\end{split}\]
for $R_S$ the scalar curvature of $g_S$, and so
\begin{equation}\label{evtroos} \begin{split}
\left( \ddt{} - \Delta \right) \tr{\omega}{\omega_S} = & \tr{\omega}{\omega_S}-g^{i\ov{j}}g^{k\ov{\ell}} f_{ki} \ov{f_{\ell j} } g_S+g^{i\ov{j}}h^{p\ov{q}} (R_{p\ov{q}i\ov{j}}-R_{i\ov{j}p\ov{q}})\\
& +g^{i\ov{j}} g^{k\ov{\ell}} f_k \ov{f_\ell} f_i \ov{f_j} R_S.
\end{split}\end{equation}
The last term can be dropped since $R_S<0$.
Now at $x$ we have $\de_i (\tr{\omega}{\omega_S})=\sum_k f_{ki}\ov{f_k},$ and using the Cauchy-Schwarz inequality we have
\begin{equation}\label{cs0} \begin{split}
|\nabla \tr{\omega}{\omega_S}|^2_g = & \sum_{i,k,p} f_{ki}\ov{f_{pi}}f_{p}\ov{f_k}\\
\leq& \sum_{k,p}|f_k||f_p| \left( \sum_i |f_{ki}|^2\right)^{1/2} \left( \sum_j |f_{pj}|^2\right)^{1/2}\\
= &\left(\sum_k |f_k| \left(\sum_i |f_{ki}|^2 \right)^{1/2}\right)^2\leq \left(\sum_\ell |f_\ell|^2\right)
\left(\sum_{i, k}|f_{ki}|^2\right)\\
=  & (\tr{\omega}{\omega_S})g^{i\ov{j}}g^{k\ov{\ell}} f_{ki} \ov{f_{\ell j} } g_S\leq Cg^{i\ov{j}}g^{k\ov{\ell}} f_{ki} \ov{f_{\ell j} } g_S.
\end{split}\end{equation}
Next we claim that given any constant $C_0$ we can find a constant $C$ such that
\begin{equation}\label{sette0}
|g^{i\ov{j}}h^{p\ov{q}}(R_{p\ov{q}i\ov{j}}-R_{i\ov{j}p\ov{q}})|\leq Ce^{t/2}+\frac{1}{2C_0}|\ti{\nabla} g|^2_g.
\end{equation}
In fact, we only need the case $C_0=1$ here, but the general case will be useful later.
To prove this claim, we first calculate (see also \cite[(2.6)]{ShW}),
\begin{equation}\label{switch0}
\begin{split}
R_{i\ov{j}p\ov{q}}&=-g_{r\ov{q}} \de_{\ov{j}} \Gamma^r_{ip}=
-g_{r\ov{q}} \de_{\ov{j}} \Gamma^r_{pi}+g_{r\ov{q}} \de_{\ov{j}} T^r_{pi}=
R_{p\ov{j}i\ov{q}}+g_{r\ov{q}} \de_{\ov{j}} T^r_{pi}\\
&=\ov{R_{j\ov{p}q\ov{i}}}+g_{r\ov{q}} \de_{\ov{j}} T^r_{pi}
=\ov{R_{q\ov{p}j\ov{i}}}+g_{i\ov{s}} \de_{p} \ov{T^s_{qj}}+g_{r\ov{q}} \de_{\ov{j}} T^r_{pi}\\
&=R_{p\ov{q}i\ov{j}}+g_{i\ov{s}} \de_{p} \ov{T^s_{qj}}+g_{r\ov{q}} \de_{\ov{j}} T^r_{pi},
\end{split}\end{equation}
and therefore
\begin{equation}\label{tre0}
g^{i\ov{j}}h^{p\ov{q}}(R_{p\ov{q}i\ov{j}}-R_{i\ov{j}p\ov{q}})=-h^{p\ov{q}}\de_{p} \ov{T^j_{qj}} -g^{i\ov{j}}h^{p\ov{q}}g_{r\ov{q}} \de_{\ov{j}} T^r_{pi}.
\end{equation}
Recall that $T_{ij\ov{\ell}}=\ti{T}_{ij\ov{\ell}}.$ Differentiating this gives
\begin{equation}\label{quattro0}
\de_p \ov{T^j_{qj}}=\ti{g}_{r\ov{s}}g^{r\ov{j}} \de_p \ov{\ti{T}^s_{qj}}-\ti{g}_{r\ov{s}} g^{r\ov{u}}g^{t\ov{j}}\ov{\ti{T}^s_{qj}}
\ti{\nabla}_p g_{t\ov{u}}=g^{r\ov{j}} \ti{\nabla}_p \ov{\ti{T}_{qj\ov{r}}}- g^{r\ov{u}}g^{t\ov{j}}\ov{\ti{T}_{qj\ov{r}}}
\ti{\nabla}_p g_{t\ov{u}},
\end{equation}
\begin{equation}\label{cinque0}
\de_{\ov{j}}T^r_{pi}=\ti{g}_{s\ov{b}}g^{r\ov{b}}\de_{\ov{j}}\ti{T}^s_{pi}-\ti{g}_{s\ov{b}}g^{r\ov{u}}g^{t\ov{b}} \ti{T}^s_{pi} \ti{\nabla}_{\ov{j}}g_{t\ov{u}}
=g^{r\ov{b}}\ti{\nabla}_{\ov{j}}\ti{T}_{pi\ov{b}}-g^{r\ov{u}}g^{t\ov{b}}\ti{T}_{pi\ov{b}}\ti{\nabla}_{\ov{j}}g_{t\ov{u}}.
\end{equation}
Putting together \eqref{tre0}, \eqref{quattro0}, \eqref{cinque0} we get
\begin{equation}\label{sei0}
\begin{split}
g^{i\ov{j}}h^{p\ov{q}}(R_{p\ov{q}i\ov{j}}-R_{i\ov{j}p\ov{q}})&=-h^{p\ov{q}}g^{r\ov{j}} \ti{\nabla}_p \ov{\ti{T}_{qj\ov{r}}} +h^{p\ov{q}} g^{r\ov{u}}g^{t\ov{j}}\ov{\ti{T}_{qj\ov{r}}}
\ti{\nabla}_p g_{t\ov{u}}\\
&-h^{p\ov{q}}g^{i\ov{j}}\ti{\nabla}_{\ov{j}}\ti{T}_{pi\ov{q}}+h^{p\ov{q}}g^{i\ov{j}}g^{t\ov{r}}\ti{T}_{pi\ov{r}}\ti{\nabla}_{\ov{j}}g_{t\ov{q}}.
\end{split}\end{equation}
Using again that the metrics $g$ and $\ti{g}$ are equivalent and $|h|_g \le C$, we can then bound this by
$$|g^{i\ov{j}}h^{p\ov{q}}(R_{p\ov{q}i\ov{j}}-R_{i\ov{j}p\ov{q}})|\leq C|\db \ti{T}|_{\ti{g}}+C|\ti{T}|_{\ti{g}}|\ti{\nabla} g|_g.$$
But from  Lemma \ref{lemmatildegestimates} we have that $|\ti{T}|_{\ti{g}}\leq C$ and $|\db \ti{T}|_{\ti{g}}\leq Ce^{t/2}$, and so we have \eqref{sette0} as required.

Then \eqref{nice2} follows from \eqref{evtroos}, \eqref{cs0} and \eqref{sette0}.   \eqref{nice3} follows immediately from \eqref{nice} and \eqref{nice2}.
\end{proof}

We can now start the main argument for the proof of Theorem \ref{theoremscalar}.  Note that since many of our inequalities require $\tilde{\omega}$ to be a metric, we will often assume (without saying it explicitly) that $t \ge T_I$, which is not a problem since $R$ is bounded on $[0,T_I]$.
 As in \cite{ST}, we consider the quantity
 $u=\vp+\dot{\vp}=\log\frac{e^{t}\omega^2}{\Omega}.$ We know that $|u|\leq C$, and we have that
$-\Delta u =R+\tr{\omega}{\omega_S}\geq R$, so our goal is to get an upper bound for $-\Delta u$. First compute from (\ref{eveqndotphi}),
\begin{equation} \label{evolveu0}
\left( \ddt{} - \Delta \right)u=\tr{\omega}{\omega_S}-1,
\end{equation}
and
$$\left( \ddt{} - \Delta \right)\Delta u=R^{i\ov{j}}u_{i\ov{j}}+\Delta u+\Delta\tr{\omega}{\omega_S}.$$
But on the other hand $R_{i\ov{j}}=-u_{i\ov{j}}-(g_S)_{i\ov{j}},$ and so
\[\begin{split}
\left( \ddt{} - \Delta \right)\Delta u&=-|\nabla \ov{\nabla} u|^2_g-\langle g_S, \nabla \ov{\nabla} u\rangle_g+\Delta u+\Delta\tr{\omega}{\omega_S}\\
&\geq -\frac{3}{2}|\nabla \ov{\nabla} u|^2_g+\Delta u+\Delta\tr{\omega}{\omega_S}-C,
\end{split}\]
using that $|g_S|_g\leq C$.
From (\ref{nice2}) we have
\begin{align*}
-\Delta\tr{\omega}{\omega_S}  \le {} &  Ce^{t/2}+|\ti{\nabla} g|^2_g-C^{-1}|\nabla \tr{\omega}{\omega_S}|^2_g-h^{i\ov{j}} (R_{i\ov{j}}+ g_{i\ov{j}})\\
\le {} &Ce^{t/2}+|\ti{\nabla} g|^2_g-C^{-1}|\nabla \tr{\omega}{\omega_S}|^2_g+h^{i\ov{j}}(u_{i\ov{j}}+(g_S)_{i\ov{j}})\\
\leq {} &Ce^{t/2}+|\ti{\nabla} g|^2_g-C^{-1}|\nabla \tr{\omega}{\omega_S}|^2_g+\frac{1}{2}|\nabla \ov{\nabla} u|^2_g,
\end{align*}
where we have used the fact that $\ddt{} \tr{\omega}{\omega_S} = -h^{i\ov{j}} \ddt{} g_{i\ov{j}}$, with $h^{i\ov{j}}$ as in the proof of Lemma \ref{lemma2tr}.
Therefore
\begin{equation}\label{otto}
\begin{split}
\left( \ddt{} - \Delta \right)(-\Delta u)&\leq 2|\nabla \ov{\nabla} u|^2_g-\Delta u+Ce^{t/2}+|\ti{\nabla} g|^2_g-C^{-1}|\nabla \tr{\omega}{\omega_S}|^2_g.
\end{split}
\end{equation}
We need a quantity whose evolution can kill the bad term $2|\nabla \ov{\nabla} u|^2_g$, and this quantity is $|\nabla u|^2_g$.
Before we compute its evolution, we need formulae to commute two covariant derivatives of the same type.
For any function $\psi$ and $(0,1)$ form $a=a_{\ov{k}}d\ov{z}^k$, a short calculation gives
$$[\nabla_{\ov{j}},\nabla_{\ov{\ell}}]\psi=-\ov{T_{j\ell}^k}\nabla_{\ov{k}}\psi, \quad [\nabla_i,\nabla_j]a_{\ov{k}}=-T^\ell_{ij}\nabla_\ell a_{\ov{k}}.$$
We will also use the familiar formulae
$$[\nabla_i,\nabla_{\ov{j}}]a_{\ov{k}}=g^{p\ov{\ell}}R_{i\ov{j}p\ov{k}}a_{\ov{\ell}}, \quad R_{i\ov{\ell}p\ov{j}}=R_{p\ov{\ell}i\ov{j}}+g_{r\ov{j}}\de_{\ov{\ell}}T^r_{pi},$$
where the second equation is contained in \eqref{switch0}.
We then compute:
\[\begin{split}
\Delta &|\nabla u|^2_g  \\
=&g^{i\ov{j}} g^{k\ov{\ell}}\bigg(\nabla_i \nabla_{\ov{j}} \nabla_k u \nabla_{\ov{\ell}} u+\nabla_k u \nabla_i \nabla_{\ov{j}}\nabla_{\ov{\ell}} u
+\nabla_i\nabla_k u \nabla_{\ov{j}}\nabla_{\ov{\ell}} u+ \nabla_i \nabla_{\ov{\ell}} u \nabla_{\ov{j}}\nabla_k u \bigg)\\
=&|\nabla \ov{\nabla} u|^2_g+|\nabla \nabla u|^2_g+g^{i\ov{j}} g^{k\ov{\ell}}\bigg(\nabla_i \nabla_k \nabla_{\ov{j}}  u \nabla_{\ov{\ell}} u+\nabla_k u \nabla_i \nabla_{\ov{\ell}}\nabla_{\ov{j}} u\\
&-\nabla_k u \nabla_i(\ov{T^p_{j\ell}}\nabla_{\ov{p}}u)\bigg)\\
=&|\nabla \ov{\nabla} u|^2_g+|\nabla \nabla u|^2_g+2\mathrm{Re}\langle \nabla \Delta u,\nabla u\rangle_g
-g^{i\ov{j}} g^{k\ov{\ell}}T_{ik}^p \nabla_p \nabla_{\ov{j}}u \nabla_{\ov{\ell}} u\\
&+g^{i\ov{j}} g^{k\ov{\ell}} g^{p\ov{q}}R_{i\ov{\ell}p\ov{j}}\nabla_{\ov{q}}u \nabla_k u-g^{i\ov{j}} g^{k\ov{\ell}}\ov{T^p_{j\ell}}\nabla_k u \nabla_i\nabla_{\ov{p}}u-g^{i\ov{j}} g^{k\ov{\ell}}\de_i\ov{T^p_{j\ell}}\nabla_k u\nabla_{\ov{p}}u  \\
=&|\nabla \ov{\nabla} u|^2_g+|\nabla \nabla u|^2_g+2\mathrm{Re}\langle \nabla \Delta u,\nabla u\rangle_g+g^{k\ov{\ell}} g^{p\ov{q}}R_{p\ov{\ell}}\nabla_{\ov{q}}u \nabla_k u\\
&-2\mathrm{Re}\left(g^{i\ov{j}} g^{k\ov{\ell}}T_{ik}^p \nabla_p \nabla_{\ov{j}}u \nabla_{\ov{\ell}} u\right)+ g^{k\ov{\ell}} g^{p\ov{q}}\de_{\ov{\ell}}T^i_{pi}\nabla_{\ov{q}}u \nabla_k u
-g^{i\ov{j}} g^{k\ov{\ell}}\de_i\ov{T^p_{j\ell}}\nabla_k u\nabla_{\ov{p}}u .
\end{split}\]
We use again \eqref{quattro0} and \eqref{cinque0}
\[
\de_{i}\ov{T^p_{j\ell}}=g^{q\ov{p}}\ti{\nabla}_{i}\ov{\ti{T}_{j\ell\ov{q}}}-g^{r\ov{p}}g^{q\ov{s}}\ov{\ti{T}_{j\ell\ov{q}}}\ti{\nabla}_{i}g_{r\ov{s}}, \quad
\de_{\ov{\ell}}T^i_{pi}=g^{i\ov{j}}\ti{\nabla}_{\ov{\ell}}\ti{T}_{pi\ov{j}}-g^{i\ov{s}}g^{r\ov{j}}\ti{T}_{pi\ov{j}}\ti{\nabla}_{\ov{\ell}}g_{r\ov{s}},\]
and Lemma \ref{lemmatildegestimates} to conclude that
\[\begin{split}
\Delta |\nabla u|^2_g \geq  & |\nabla \ov{\nabla} u|^2_g+|\nabla \nabla u|^2_g+2\mathrm{Re}\langle \nabla \Delta u,\nabla u\rangle_g+g^{k\ov{\ell}} g^{p\ov{q}}R_{p\ov{\ell}}\nabla_{\ov{q}}u \nabla_k u\\
&-C|\nabla\ov{\nabla}u|_g |\nabla u|_g-Ce^{t/2}|\nabla u|^2_g-C|\ti{\nabla}g|_g |\nabla u|^2_g\\
\geq  & \frac{1}{2}|\nabla \ov{\nabla} u|^2_g+|\nabla \nabla u|^2_g+2\mathrm{Re}\langle \nabla \Delta u,\nabla u\rangle_g+g^{k\ov{\ell}} g^{p\ov{q}}R_{p\ov{\ell}}\nabla_{\ov{q}}u \nabla_k u\\
&-Ce^{t/2}|\nabla u|^2_g-C|\ti{\nabla}g|_g |\nabla u|^2_g.
\end{split}\]
Next, using (\ref{evolveu0}), we have
$$\ddt{} | \nabla u|^2_g = g^{k\ov{\ell}} g^{p\ov{q}} R_{p\ov{\ell}} \nabla_k u \nabla_{\ov{q}}u + | \nabla u|^2_g + 2 \textrm{Re} \langle \nabla \Delta u, \nabla u \rangle_g + 2 \textrm{Re} \langle \nabla \tr{\omega}{\omega_S}, \nabla u \rangle_g$$
and hence
\begin{equation}\label{nove}
\begin{split}
\left( \ddt{} - \Delta \right)|\nabla u|^2_g \leq  & -\frac{1}{2}|\nabla \ov{\nabla} u|^2_g-|\nabla \nabla u|^2_g+2\mathrm{Re}\langle \nabla \tr{\omega}{\omega_S},\nabla u\rangle_g\\
&+Ce^{t/2}|\nabla u|^2_g+C|\ti{\nabla}g|_g |\nabla u|^2_g.
\end{split}
\end{equation}
We will use this evolution inequality to bound $|\nabla u|^2_g$.

\begin{proposition}
There is a constant $C$ such that
$$|\nabla u|^2_g \leq Ce^{t/2}.$$
\end{proposition}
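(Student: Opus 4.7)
The strategy is a Cheng--Yau--Perelman type maximum principle argument applied to the quantity $Q = |\nabla u|^2_g / (A - u)$, where $A$ is a uniform constant chosen so that $v := A - u \ge 1$ (possible since $|u| \le C$ by Lemma \ref{lowerbound}). Writing $H := |\nabla u|^2_g$, the desired bound $H \le Ce^{t/2}$ is equivalent to $Q \le Ce^{t/2}$. A direct maximum principle on $Q$ alone does not close: the evolution inequality \eqref{nove} contains the terms $Ce^{t/2}H$, $C|\ti{\nabla} g|_g H$, and $2\,\textrm{Re}\langle\nabla \tr{\omega}{\omega_S},\nabla u\rangle_g$, and neither $|\ti{\nabla} g|_g$ nor $|\nabla\tr{\omega}{\omega_S}|_g$ is pointwise controlled a priori. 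To absorb them, I consider
$$\tilde Q := Q + B\bigl(\tr{\omega}{\omega_S} + C_0\tr{\ti{\omega}}{\omega}\bigr),$$
where $C_0,C_1$ are the constants from Lemma \ref{lemma2tr} and $B$ is a large uniform constant to be chosen; by Theorem \ref{metricbdthm} the added term is uniformly bounded, so $\tilde Q$ and $Q$ differ by a bounded amount.

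At a maximum $(x_0,t_0)$ of $\tilde Q$ with $t_0>T_I$ (times $\le T_I$ are controlled by the short-time theory), the identity $\nabla\tilde Q=0$ yields the shifted gradient identity
$$\nabla H = -\frac{H}{v}\nabla u - Bv\,\nabla K,\qquad K:=\tr{\omega}{\omega_S}+C_0\tr{\ti{\omega}}{\omega}.$$
Combining $0\le(\ddt{}-\Delta)\tilde Q$ with \eqref{nove}, \eqref{nice3}, and \eqref{evolveu0}, and applying Cauchy--Schwarz to each of the cross terms $|\ti{\nabla} g|_g H/v$, $\langle\nabla\tr{\omega}{\omega_S},\nabla u\rangle_g/v$, and $B\langle\nabla K,\nabla u\rangle_g/v$, one selects $B$ large enough that the positive multiples of $|\ti{\nabla} g|^2_g$ and $|\nabla\tr{\omega}{\omega_S}|^2_g$ produced by Cauchy--Schwarz are dominated by the negative contributions $-B|\ti{\nabla} g|^2_g-BC_1^{-1}|\nabla\tr{\omega}{\omega_S}|^2_g$ from \eqref{nice3}. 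What remains is an inequality of the schematic form
$$\frac{|\nabla\ov{\nabla}u|^2_g + |\nabla\nabla u|^2_g}{v} + \frac{B}{2}|\ti{\nabla} g|^2_g + \frac{B}{2C_1}|\nabla\tr{\omega}{\omega_S}|^2_g \le \frac{CH^2}{Bv^2} + \frac{Ce^{t/2}H}{v} + C_B + CBe^{t/2}.$$

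To convert this into a bound on $H$, I combine the elementary inequality $|\nabla H|^2_g \le 2H\bigl(|\nabla\ov{\nabla}u|^2_g + |\nabla\nabla u|^2_g\bigr)$ with the shifted gradient identity via a dichotomy at $(x_0,t_0)$. If $H^{3/2}/v \ge 2Bv|\nabla K|_g$, the first piece of $\nabla H$ dominates, so $|\nabla H|^2_g \ge H^3/(4v^2)$ and hence $|\nabla\ov\nabla u|^2_g + |\nabla\nabla u|^2_g \ge H^2/(8v^2)$; plugged into the displayed inequality, with $B$ chosen so that $CH^2/(Bv^2)$ is absorbed by $H^2/(16v^3)$, this yields a quadratic inequality in $H$ forcing $H(x_0,t_0)\le Ce^{t_0/2}$. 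Otherwise $Bv|\nabla K|_g$ dominates, so $H^3 \le 4B^2 v^4 |\nabla K|^2_g$; using $|\nabla\tr{\ti{\omega}}{\omega}|_g \le C|\ti{\nabla} g|_g$ and the preserved good LHS terms to bound $|\nabla K|^2_g \le (C_*/B)\cdot(\mathrm{RHS})$, the inequality reduces to $H^3 \le CB\cdot(\mathrm{RHS})$, which again gives $H(x_0,t_0)\le Ce^{t_0/2}$. Applying this on $M\times[T_I,T]$ for arbitrary $T$, we obtain $\tilde Q\le Ce^{T/2}$ on the whole interval, whence $|\nabla u|^2_g \le Ce^{t/2}$.

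The principal obstacle is the coexistence of the $e^{t/2}$-scale torsion and curvature bounds from Lemma \ref{lemmatildegestimates} in the evolution of $H$, and the new cross term $B\langle\nabla K,\nabla u\rangle_g$ introduced by the $BK$ shift in $\tilde Q$; the Cheng--Yau division by $A-u$ is precisely what creates the dichotomy on $|\nabla H|_g$ that allows one to overcome both simultaneously.
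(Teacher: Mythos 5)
Your argument is correct in outline, but it follows a genuinely different route from the paper's, and the difference is worth pinning down. The paper also forms $Q_1 = \frac{|\nabla u|^2_g}{A-u} + C_2\bigl(\tr{\omega}{\omega_S}+C_0\tr{\ti{\omega}}{\omega}\bigr)$ and uses \eqref{nove}, \eqref{nice3}, but the mechanism for extracting a good negative quartic term is an $\ve$-splitting of the cross term $\frac{2\mathrm{Re}\langle\nabla|\nabla u|^2_g,\nabla u\rangle_g}{(A-u)^2}$ into an $\ve$-portion (absorbed by Cauchy--Schwarz into the Bochner terms) and a $(1-\ve)$-portion (which produces $\langle\nabla Q_1,\nabla u\rangle_g$, vanishing at the maximum); this leaves a residual $-\ve\frac{|\nabla u|^4_g}{(A-u)^3}$ term that closes the estimate after rescaling $Q_2=e^{-t/2}Q_1$. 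Your approach instead substitutes the first-order condition $\nabla\tilde Q=0$ directly, which exactly cancels the $\mp 2|\nabla u|^4_g/(A-u)^3$ pair; you recover the missing negativity through the dichotomy on $|\nabla K|_g$: when $\nabla K$ is small the Bochner term $|\nabla\ov\nabla u|^2_g+|\nabla\nabla u|^2_g\ge|\nabla H|^2_g/(2H)\gtrsim H^2/v^2$ supplies the quadratic damping, and when $\nabla K$ is large the $-B|\ti{\nabla}g|^2_g-\frac{B}{C_1}|\nabla\tr{\omega}{\omega_S}|^2_g$ terms from \eqref{nice3} dominate $|\nabla K|^2_g$ and, via the dichotomy inequality, $H^3$. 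Both approaches are Cheng--Yau variants; yours trades the paper's $\ve$-splitting for a dichotomy, and avoids the explicit rescaling by instead optimizing over a variable endpoint $T$.

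Two small caveats to keep in mind when filling in the details. First, the numerical constant ``$2$'' in your dichotomy $H^{3/2}/v \ge 2Bv|\nabla K|_g$ is too tight: in Case 1 the reverse triangle inequality gives $|\nabla H|_g\ge\tfrac12 H^{3/2}/v$, whence $\frac{1}{2v}\bigl(|\nabla\ov\nabla u|^2_g+|\nabla\nabla u|^2_g\bigr)\ge\frac{H^2}{16v^3}$, but the cross term $\frac{2B}{v}|\langle\nabla K,\nabla u\rangle_g|$ is only bounded by $\frac{H^2}{v^3}$ under that dichotomy, which overwhelms the Bochner contribution; replacing $2$ by a suitably large absolute constant (depending on the $C$'s fixed in Lemmas \ref{lemma2tr} and \ref{lemmatildegestimates}) fixes this. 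Second, in Case 2 the Cauchy--Schwarz used to handle $\frac{2B}{v}\langle\nabla K,\nabla u\rangle_g$ must be weighted so that the resulting multiple of $|\nabla K|^2_g$ is absorbed by only a fraction of $B|\ti{\nabla}g|^2_g+\frac{B}{C_1}|\nabla\tr{\omega}{\omega_S}|^2_g$ — leaving the remainder to bound $|\nabla K|^2_g$, and hence $H^3$, from above — and separately $B$ must be taken large enough that the $CH^2/(Bv^2)$ term from $|\ti{\nabla}g|_g H/v$ is dominated in Case 1. With those adjustments the scheme closes and gives $H(x_0,t_0)\le Ce^{t_0/2}$, and your sup-over-$[T_I,T]$ argument then yields the global bound.
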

\begin{proof}
We use the method of Cheng-Yau \cite{CY}, see also \cite{SeT, ST}. We fix a constant $A$ such that $|u|\leq A-1$ and use \eqref{nove} to compute
\begin{equation}\label{comp1}
\begin{split}
\left( \ddt{} - \Delta \right)&\left(\frac{|\nabla u|^2_g}{A-u}\right)\\
= &\frac{1}{A-u}\left( \ddt{} - \Delta \right)|\nabla u|^2_g
+\frac{|\nabla u|^2_g}{(A-u)^2}\left( \ddt{} - \Delta \right)u\\
&-\frac{2\mathrm{Re}\langle \nabla|\nabla u|^2_g,\nabla u\rangle_g}{(A-u)^2}
-2\frac{|\nabla u|^4_g}{(A-u)^3}\\
\leq  & \frac{1}{A-u}\bigg( -\frac{1}{2}|\nabla \ov{\nabla} u|^2_g-|\nabla \nabla u|^2_g
+2\mathrm{Re}\langle \nabla \tr{\omega}{\omega_S},\nabla u\rangle_g\\
&+Ce^{t/2}|\nabla u|^2_g+C|\ti{\nabla}g|_g |\nabla u|^2_g\bigg)
+(\tr{\omega}{\omega_S}-1)\frac{|\nabla u|^2_g}{(A-u)^2}\\
&-\frac{2\mathrm{Re}\langle \nabla|\nabla u|^2_g,\nabla u\rangle_g}{(A-u)^2}-2\frac{|\nabla u|^4_g}{(A-u)^3}.
\end{split}
\end{equation}
Note that the term $(\tr{\omega}{\omega_S}-1)\frac{|\nabla u|^2_g}{(A-u)^2}$ can be absorbed in the term
$\frac{Ce^{t/2}|\nabla u|^2_g}{A-u}$.
For $\ve>0$ small (to be fixed soon), write
\begin{equation}\label{comp2}
\begin{split}
\frac{2\mathrm{Re}\langle \nabla|\nabla u|^2_g,\nabla u\rangle_g}{(A-u)^2}=&\ve\frac{2\mathrm{Re}\langle \nabla|\nabla u|^2_g,\nabla u\rangle_g}{(A-u)^2}\\
&+\frac{2(1-\ve)}{A-u}\mathrm{Re}\left\langle \nabla\left(\frac{|\nabla u|^2_g}{A-u}\right),\nabla u\right\rangle_g\\
&-\frac{2(1-\ve)|\nabla u|^4_g}{(A-u)^3},
\end{split}
\end{equation}
and use the Cauchy-Schwarz inequality to bound
\begin{equation}\label{comp3}
\begin{split}
-\ve\frac{2\mathrm{Re}\langle \nabla|\nabla u|^2_g,\nabla u\rangle_g}{(A-u)^2}&\leq \ve 2\sqrt{2} \frac{|\nabla u|^2_g (|\nabla \ov{\nabla}u|^2_g+|\nabla\nabla u|^2_g)^{1/2}}{(A-u)^2}\\
&\leq \frac{\ve}{2} \frac{|\nabla u|^4_g}{(A-u)^2} + 4\ve  \frac{|\nabla \ov{\nabla}u|^2_g+|\nabla\nabla u|^2_g}{(A-u)^2} \\
&\leq \frac{\ve}{2} \frac{|\nabla u|^4_g}{(A-u)^2} + \frac{1}{2}  \frac{|\nabla \ov{\nabla}u|^2_g+|\nabla\nabla u|^2_g}{A-u},
\end{split}
\end{equation}
provided $\ve\leq 1/8$ (here we fix $\ve$).
We also bound
\begin{equation}\label{comp5}
\frac{C|\ti{\nabla}g|_g |\nabla u|^2_g}{A-u}\leq C|\ti{\nabla}g|^2_g+\frac{\ve}{2}\frac{|\nabla u|^4_g}{(A-u)^2}.
\end{equation}
Putting \eqref{comp1}, \eqref{comp2}, \eqref{comp3}, \eqref{comp5} together, we get
\[\begin{split}
\left( \ddt{} - \Delta \right)\left(\frac{|\nabla u|^2_g}{A-u}\right)
\leq &\frac{1}{A-u}\bigg(2\mathrm{Re}\langle \nabla \tr{\omega}{\omega_S},\nabla u\rangle_g+Ce^{t/2}|\nabla u|^2_g\bigg)+C|\ti{\nabla}g|^2_g\\
&-\ve\frac{|\nabla u|^4_g}{(A-u)^3}-\frac{2(1-\ve)}{A-u}\mathrm{Re}\left\langle \nabla\left(\frac{|\nabla u|^2_g}{A-u}\right),\nabla u\right\rangle_g.
\end{split}\]
Call now
$$Q_1=\frac{|\nabla u|^2_g}{A-u}+C_2(\tr{\omega}{\omega_S}+C_0\tr{\ti{\omega}}{\omega}),$$
where $C_0$ is as in \eqref{nice3}, and $C_2$ is a large uniform constant to be fixed soon.
We can use \eqref{nice3} to get
\begin{equation}\label{comp6}
\begin{split}
\left( \ddt{} - \Delta \right)Q_1
\leq& \frac{1}{A-u}\bigg(2\mathrm{Re}\langle \nabla \tr{\omega}{\omega_S},\nabla u\rangle_g+Ce^{t/2}|\nabla u|^2_g\bigg)\\
&-\ve\frac{|\nabla u|^4_g}{(A-u)^3}-\frac{2(1-\ve)}{A-u}\mathrm{Re}\left\langle \nabla\left(\frac{|\nabla u|^2_g}{A-u}\right),\nabla u\right\rangle_g\\
&-\frac{C_2}{2}|\ti{\nabla}g|^2_g-2|\nabla \tr{\omega}{\omega_S}|^2_g+Ce^{t/2},
\end{split}
\end{equation}
provided $C_2$ is large enough.
We now observe that $|\nabla \tr{\ti{\omega}}{\omega}|^2_{\ti{g}}\leq 2|\ti{\nabla} g|^2_{\ti{g}}.$
Indeed, if we choose local coordinates such that $\ti{g}$ is the identity at a point, then at that point we have
\begin{equation}\label{cs}
|\nabla \tr{\ti{\omega}}{\omega}|^2_{\ti{g}}=\sum_k |\sum_i \ti{\nabla}_k g_{i\ov{i}}|^2\leq
2\sum_{i,k}|\ti{\nabla}_k g_{i\ov{i}}|^2\leq 2\sum_{i,j,k}|\ti{\nabla}_k g_{i\ov{j}}|^2=2|\ti{\nabla} g|^2_{\ti{g}}.
\end{equation}
Since $\ti{g}$ and $g$ are uniformly equivalent, we conclude that
$|\nabla \tr{\ti{\omega}}{\omega}|^2_{g}\leq C|\ti{\nabla} g|^2_{g}.$
We can then assume that $C_2$ was large enough (this fixes $C_2$) so that
\begin{equation}\label{comp7}
-\frac{C_2}{2}|\ti{\nabla} g|^2_{g}\leq - |\nabla \tr{\ti{\omega}}{\omega}|^2_{g}.
\end{equation}
Furthermore, we can bound
\begin{equation}\label{comp8}
C\frac{e^{t/2}|\nabla u|^2_g}{A-u}\leq \frac{\ve}{4}\frac{|\nabla u|^4_g}{(A-u)^3}+Ce^t,
\end{equation}
\begin{equation}\label{comp9}
2\frac{\mathrm{Re}\langle \nabla \tr{\omega}{\omega_S},\nabla u\rangle_g}{A-u}\leq |\nabla \tr{\omega}{\omega_S}|^2_g+\frac{\ve}{4}\frac{|\nabla u|^4_g}{(A-u)^3}+C,
\end{equation}
so combining \eqref{comp6}, \eqref{comp7}, \eqref{comp8}, \eqref{comp9}  we get
\[\begin{split}
\left( \ddt{} - \Delta \right)Q_1
\leq& -\frac{\ve}{2}\frac{|\nabla u|^4_g}{(A-u)^3}-\frac{2(1-\ve)}{A-u}\mathrm{Re}\left\langle \nabla\left(\frac{|\nabla u|^2_g}{A-u}\right),\nabla u\right\rangle_g\\
& - |\nabla \tr{\ti{\omega}}{\omega}|^2_{g}-|\nabla \tr{\omega}{\omega_S}|^2_g+Ce^{t}.
\end{split}\]
We can write this as
\[\begin{split}
\left( \ddt{} - \Delta \right)Q_1
\leq&-\frac{\ve}{2}\frac{|\nabla u|^4_g}{(A-u)^3}-\frac{2(1-\ve)}{A-u}\mathrm{Re}\left\langle \nabla Q_1,\nabla u\right\rangle_g\\
& +\frac{2(1-\ve)C_2}{A-u}\mathrm{Re}\left\langle \nabla \tr{\omega}{\omega_S},\nabla u\right\rangle_g\\
&+\frac{2(1-\ve)C_0C_2}{A-u}\mathrm{Re}\left\langle \nabla \tr{\ti{\omega}}{\omega},\nabla u\right\rangle_g\\
& - |\nabla \tr{\ti{\omega}}{\omega}|^2_{g}-|\nabla \tr{\omega}{\omega_S}|^2_g+Ce^{t},
\end{split}\]
which together with the bounds
$$\frac{2(1-\ve)C_2}{A-u}\mathrm{Re}\left\langle \nabla \tr{\omega}{\omega_S},\nabla u\right\rangle_g\leq |\nabla \tr{\omega}{\omega_S}|^2_g+\frac{\ve}{8}\frac{|\nabla u|^4_g}{(A-u)^3}+C,$$
$$\frac{2(1-\ve)C_0C_2}{A-u}\mathrm{Re}\left\langle \nabla \tr{\ti{\omega}}{\omega},\nabla u\right\rangle_g\leq  |\nabla \tr{\ti{\omega}}{\omega}|^2_g+\frac{\ve}{8}\frac{|\nabla u|^4_g}{(A-u)^3}+C,$$
gives us
\[\begin{split}
\left( \ddt{} - \Delta \right)Q_1&
\leq-\frac{\ve}{4}\frac{|\nabla u|^4_g}{(A-u)^3}-\frac{2(1-\ve)}{A-u}\mathrm{Re}\left\langle \nabla Q_1,\nabla u\right\rangle_g+Ce^{t}.
\end{split}\]
Now define $Q_2=e^{-t/2}Q_1$, which satisfies
\[\begin{split}
\left( \ddt{} - \Delta \right)Q_2&
\leq-C^{-1}e^{-t/2}|\nabla u|^4_g-\frac{2(1-\ve)}{A-u}\mathrm{Re}\left\langle \nabla Q_2,\nabla u\right\rangle_g+Ce^{t/2}.
\end{split}\]
At a maximum of $Q_2$ (occurring at a time $t_0 >  T_I$) we conclude that $|\nabla u|^4_g\leq Ce^t$, which implies that $Q_2\leq C$ everywhere. This proves the proposition.
\end{proof}
Now that we know that $|\nabla u|^2_g \leq Ce^{t/2}$, we can go back to \eqref{nove} and get
\begin{equation}\label{dieci}
\begin{split}
\left( \ddt{} - \Delta \right)|\nabla u|^2_g&\leq -\frac{1}{2}|\nabla \ov{\nabla} u|^2_g-|\nabla \nabla u|^2_g+
|\nabla \tr{\omega}{\omega_S}|^2_g+|\ti{\nabla}g|^2_g+Ce^t.
\end{split}
\end{equation}

Finally we can put everything together to prove Theorem \ref{theoremscalar}.

\begin{proof}[Proof of Theorem \ref{theoremscalar}]
We will show that
there is a constant $C$ such that
$$-\Delta u\leq Ce^{t/2}.$$
and this will give $R\leq Ce^{t/2}$.

From \eqref{otto} and \eqref{dieci}, we see that
\[\begin{split}
\left( \ddt{} - \Delta \right)(-\Delta u+6|\nabla u|^2_g)\leq& -|\nabla \ov{\nabla} u|^2_g-\Delta u+
C|\nabla \tr{\omega}{\omega_S}|^2_g\\
&+C|\ti{\nabla}g|^2_g+Ce^t.
\end{split}\]
Using \eqref{nice3} we get
\[\begin{split}
\left( \ddt{} - \Delta \right)\left(-\Delta u+6|\nabla u|^2_g+C_2(\tr{\omega}{\omega_S}+C_0\tr{\ti{\omega}}{\omega})\right)&\leq -|\nabla \ov{\nabla} u|^2_g-\Delta u+Ce^t,
\end{split}\]
provided $C_2$ is large enough. Define now
$$Q_3=e^{-t/2}\left(-\Delta u+6|\nabla u|^2_g+C_2(\tr{\omega}{\omega_S}+C_0\tr{\ti{\omega}}{\omega})\right).$$
Note that $Q_3\geq -Ce^{-t/2}$, because $-\Delta u\geq R\geq -C$.
Then,
$$\left( \ddt{} - \Delta \right)Q_3\leq -e^{-t/2}|\nabla \ov{\nabla} u|^2_g-e^{-t/2}\Delta u+Ce^{t/2},$$
where we absorbed a term like $e^{-t/2}$ into $e^{t/2}$. From the  Cauchy-Schwarz inequality we also have that
$$(-\Delta u)^2\leq 2|\nabla\ov{\nabla}u|^2_g.$$
It follows that at a maximum of $Q_3$ (occurring at $t_0 > T_I$) we have that
$(-\Delta u)^2\leq Ce^t$, which implies that $Q_3\leq C$ everywhere. This completes the proof of the theorem.
\end{proof}

We end this section by applying our Chern scalar curvature bound to obtain an exponential decay estimate for $\dot{\varphi}$.

\begin{lemma} \label{expdotphi}  For any $\eta$ with $0< \eta <1/2$ and any $\sigma$ with $0<\sigma <1/4$, there exists a constant $C$ such that
$$-C e^{-\eta t} \le \dot{\varphi} \le C e^{-\sigma t}.$$
\end{lemma}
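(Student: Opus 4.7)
The plan is to exploit the pointwise ODE
\[
\ddot\vp + \dot\vp = -R - 1,
\]
obtained by differentiating the identity $\dot\vp = \log(e^t\omega^2/\Omega) - \vp$ in $t$ and computing $\partial_t \log\det g = -R - 2$. Combined with Theorem \ref{theoremscalar} and the uniform bound $|\dot\vp| \le C$ from Lemma \ref{lowerbound}, this immediately yields the symmetric two-sided estimate $|\ddot\vp| \le Ce^{t/2}$ together with the sharper \emph{one-sided} upper bound $\ddot\vp \le C$, which uses only the easy bounds $R \ge -C$ and $\dot\vp \ge -C$. The asymmetry between the exponents $\sigma < 1/4$ and $\eta < 1/2$ in the conclusion will be a direct reflection of this asymmetry in the control of $\ddot\vp$.

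To extract decay of $\dot\vp$, I will combine these second-derivative bounds with the exponential decay $|\vp| \le C(1+t)e^{-t}$ from Lemma \ref{key} via a one-sided Landau--Kolmogorov type argument, carried out pointwise in $x \in M$. Fix $x \in M$ and set $u(t) = \dot\vp(x,t)$, $v(t) = \vp(x,t)$, so $u = v'$. For the upper bound, suppose $u(t_0) = \ve > 0$ at some $t_0 \ge T_I$. Since $|u'| \le Ce^{t_0/2}$ on a short interval, $u(s) \ge \ve/2$ for $s \in [t_0, t_0+h]$ with $h = \ve/(2Ce^{t_0/2})$, and integrating gives
\[
v(t_0+h) - v(t_0) \;=\; \int_{t_0}^{t_0+h} u(s)\,ds \;\ge\; \frac{\ve^2}{4Ce^{t_0/2}}.
\]
On the other hand $|v(t_0+h) - v(t_0)| \le C(1+t_0)e^{-t_0}$ (assuming $h \le 1$, which is automatic since $\ve$ is uniformly bounded), so $\ve \le C(1+t_0)^{1/2}e^{-t_0/4}$; absorbing the polynomial factor yields $\dot\vp \le Ce^{-\sigma t}$ for any $\sigma < 1/4$.

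For the lower bound, suppose instead $u(t_0) = -\ve < 0$. The crucial improvement here is that we may use the sharper one-sided bound $u' = \ddot\vp \le C$, which is independent of $t$, forcing $u(s) \le -\ve + C(s - t_0)$ and hence $u(s) \le -\ve/2$ on $[t_0, t_0 + h]$ with $h = \ve/(2C)$. The same integration argument as above now gives $\ve \le C(1+t_0)^{1/2}e^{-t_0/2}$, so $-\dot\vp \le Ce^{-\eta t}$ for any $\eta < 1/2$. The main technical effort was really the scalar curvature bound of Theorem \ref{theoremscalar}; once that is available, the Landau-type argument just sketched reduces to a pair of elementary calculus estimates, with the asymmetry in the exponents tracking exactly the asymmetry in the scalar curvature bound.
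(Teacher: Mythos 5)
Your proof is correct and is essentially the paper's argument rephrased: the paper proves the same two inequalities by contradiction with an extremizing sequence $(x_k,t_k)$, picking an interval of length $\gamma_k \sim \ve/(C_0)$ or $\gamma_k \sim \ve/(C_0 e^{t_k/2})$ on which $\dot\vp$ stays comparable to its extreme value and then integrating against Lemma \ref{key}, which is exactly your Landau--Kolmogorov step written in reverse. Both arguments turn on the same asymmetric second-derivative bounds $-Ce^{t/2} \le \ddot\vp \le C$ drawn from Theorem \ref{theoremscalar} and $|\dot\vp|\le C$, and both lose the same factors (giving $\eta<1/2$ and $\sigma<1/4$), so there is no substantive difference of approach.
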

\begin{proof}
We first prove the lower bound by refining an argument  in \cite{SW}.  We have
\begin{equation} \label{evolvedp0}
\ddt{} \dot{\varphi} = -R - 1 -\dot{\varphi}
\end{equation}
and hence, by Theorem \ref{theoremscalar} and the fact that $|\dot{\varphi}|$ is bounded,
\begin{equation} \label{dpC00}
\ddt{} \dot{\varphi}(t) \le  C_0,
\end{equation}
for a uniform $C_0$.
Suppose for a contradiction that we do not have the bound $\dot{\varphi} \ge -Ce^{-\eta t}$ for any $C$.  Then there exists a sequence $(x_k, t_k) \in M \times [0,\infty)$ with $t_k \rightarrow \infty$ as $k \rightarrow \infty$ such that
$$\dot{\varphi}(x_k, t_k) \le -k e^{-\eta t_k}.$$
Put $\gamma_k = \frac{k}{2C_0} e^{-\eta t_k}$.  From now on we work at the point $x_k$.
Then by \eqref{dpC00}, we have that
$$\dot{\varphi} \le -\frac{k}{2} e^{-\eta t_k} \quad \textrm{on } [t_k, t_k + \gamma_k ].$$
Indeed,
$$\dot{\varphi}(t_k+ a) - \dot{\varphi}(t_k) = \int_{t_k}^{t_k +a} \ddt{} \dot{\varphi} \, dt \le   C_0  \gamma_k , \quad \textrm{for } a \in [0,\gamma_k],$$
and hence $\dot{\varphi}(t_k+a) \le \dot{\varphi}(t_k) + C_0 \gamma_k   \le - ke^{-\eta t_k} + \frac{k}{2} e^{-\eta t_k}$.

Then, using Lemma \ref{key},
$$- C (1+t_k) e^{-t_k} \le \varphi(t_k+\gamma_k) - \varphi(t_k) = \int_{t_k}^{t_k+\gamma_k} \dot{\varphi} dt \le - \gamma_k \frac{k}{2} e^{-\eta t_k} = - \frac{k^2}{4 C_0} e^{-2\eta t_k}.$$
But if $2\eta <1$ then we get a contradiction when $k \rightarrow \infty$ and we are done.

For the upper bound of $\varphi$ we use the upper bound $R \le Ce^{t/2}$ of Theorem \ref{theoremscalar}.  From \eqref{evolvedp0}, we have
\begin{equation}
\ddt{} \dot{\varphi}(t) \ge - C_0 e^{t/2},
\end{equation}
for a uniform $C_0$.  Note that we may assume, by increasing $C_0$, that we have
\begin{equation} \label{dotphi10}
\frac{\partial^2}{\partial t^2} \varphi (t) \ge - C_0 e^{t'/2} \quad \textrm{for } t \in [t', t'+1],
\end{equation}
for any time $t'$.
Suppose for a contradiction that we do not have the bound $\dot{\varphi} \le Ce^{-\sigma t}$ for any $C$.  Then there exists a sequence $(x_k, t_k) \in M \times [0,\infty)$ with $t_k \rightarrow \infty$ as $k \rightarrow \infty$ such that
$$\dot{\varphi}(x_k, t_k) \ge k e^{-\sigma t_k}.$$
Put $\gamma_k = \frac{k}{2C_0} e^{-(\sigma + 1/2) t_k}$ which we assume for the moment satisfies $\gamma_k \le 1$.  From now on we work at the point $x_k$.
Then by \eqref{dotphi10}, we have that
$$\dot{\varphi} \ge \frac{k}{2} e^{-\sigma t_k} \quad \textrm{on } [t_k, t_k + \gamma_k ].$$
Indeed, this follows from (\ref{dotphi10}) since
$$\dot{\varphi}(t_k+ a) - \dot{\varphi}(t_k) = \int_{t_k}^{t_k+a} \ddt{} \dot{\varphi} \, dt \ge - \gamma_k C_0 e^{t_k/2}, \quad \textrm{for } a \in [0,\gamma_k],$$
and hence $\dot{\varphi}(t_k+a) \ge \dot{\varphi}(t_k) - \gamma_k C_0 e^{t_k/2} \ge ke^{-\sigma t_k} - \frac{k}{2} e^{-\sigma t_k}$.

Then from Lemma \ref{key},
$$C (1+t_k) e^{-t_k} \ge \varphi(t_k+\gamma_k) - \varphi(t_k) = \int_{t_k}^{t_k+\gamma_k} \dot{\varphi} dt \ge \gamma_k \frac{k}{2} e^{-\sigma t_k} = \frac{k^2}{4 C_0} e^{-(2\sigma+1/2)t_k}.$$
But since $2\sigma + 1/2 <1$ we get a contradiction when $k \rightarrow \infty$ and we are done.

It remains to check the case when $\gamma_k >1$.  But then we have $\dot{\varphi} \ge \frac{k}{2} e^{-\sigma t_k}$ on $[t_k, t_k+1]$ since $\dot{\varphi}(t_k +a) - \dot{\varphi}(t_k)  \ge - \gamma_k C_0 e^{t_k/2}$ for $a\in [0,1]$ and so we  get $\dot{\varphi}(t_k+a) \ge \frac{k}{2} e^{-\sigma t_k}$ for $a \in [0, 1]$.  Then
$$C (1+t_k) e^{-t_k} \ge \varphi(t_k+1) - \varphi(t_k) = \int_{t_k}^{t_k+1} \dot{\varphi} dt \ge  \frac{k}{2} e^{-\sigma t_k}$$
and we get a contradiction since $\sigma<1$.
\end{proof}

\section{Exponential decay estimates for the  metric} \label{sectionexp}

In this section we establish the key estimates which show that $\omega(t)$ and $\tilde{\omega}$ approach each other exponentially fast as $t \rightarrow \infty$.  More precisely we prove:

\begin{theorem} \label{theorempinch}  For any $\ve$ with $0<\ve < 1/8$, there exists $C$ such that for $t\geq T_I$
$$(1- Ce^{-\ve t}) \tilde{\omega} \le \omega(t) \le (1+Ce^{-\ve t}) \tilde{\omega}.$$
\end{theorem}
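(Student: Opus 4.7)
The plan is to apply the maximum principle to the auxiliary quantity
\[
Q \;=\; e^{at}(\mathrm{tr}_\omega\tilde\omega - 2) - e^{\delta t}\varphi,
\]
where $a<1/4$ is chosen slightly larger than the target exponent $2\varepsilon$ and $\delta$ is chosen in $(2a+1/2,\,1)$. The key identity $\Delta\varphi = 2 - \mathrm{tr}_\omega\tilde\omega$ gives
\[
(\partial_t - \Delta)(-e^{\delta t}\varphi) \;=\; -e^{\delta t}(\mathrm{tr}_\omega\tilde\omega - 2) - \delta e^{\delta t}\varphi - e^{\delta t}\dot\varphi,
\]
supplying the dominant negative coefficient on $(\mathrm{tr}_\omega\tilde\omega - 2)$, while the $\varphi$ and $\dot\varphi$ terms are controlled by Lemmas \ref{key} and \ref{expdotphi} provided $\delta<1$ and one takes $a<\sigma<1/4$.

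First I would derive an evolution inequality $(\partial_t - \Delta)\mathrm{tr}_\omega\tilde\omega \leq Ce^{t/2} + (\text{gradient terms})$, analogous to Lemma \ref{lemmalogtr} but for the un-logged trace. The computation from Proposition 3.1 of \cite{TW} adapted to the normalized flow and time-dependent $\tilde\omega$, together with Theorem \ref{metricbdthm}, Lemma \ref{lemmatildegestimates} and Cauchy--Schwarz, should do this; any stubborn gradient term can be absorbed by adding a Phong--Sturm correction $1/(\tilde C + e^{at}(\mathrm{tr}_\omega\tilde\omega - 2))$ as in the proof of Theorem \ref{metricbdthm}. At an interior maximum of $Q$ at some $t_0 > T_I$, the inequality $(\partial_t - \Delta)Q \geq 0$ then yields
\[
e^{\delta t_0}(\mathrm{tr}_\omega\tilde\omega - 2) \;\leq\; Ce^{(a+1/2)t_0} + Ce^{(\delta-\sigma)t_0} + o(1),
\]
so $\mathrm{tr}_\omega\tilde\omega - 2 \leq Ce^{-a t_0}$ at the max, whence $Q \leq C$ globally and $\mathrm{tr}_\omega\tilde\omega - 2 \leq Ce^{-at}$ for all $t\geq T_I$. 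Running the identical argument with $Q' = e^{at}(\mathrm{tr}_{\tilde\omega}\omega - 2) - e^{\delta t}\varphi$, now using the evolution of $\mathrm{tr}_{\tilde\omega}\omega$ from Lemma \ref{lemma2tr}, yields the companion bound $\mathrm{tr}_{\tilde\omega}\omega - 2 \leq Ce^{-at}$.

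To convert these two trace bounds into the pointwise estimate, let $\lambda_1,\lambda_2>0$ be the eigenvalues of $\omega$ with respect to $\tilde\omega$. They read $\lambda_1+\lambda_2 \leq 2+Ce^{-at}$ and $1/\lambda_1+1/\lambda_2 \leq 2+Ce^{-at}$, while Lemma \ref{good}, the expansion $e^t\tilde\omega^2 = \Omega + O(e^{-t})$ used in the proof of Lemma \ref{key}, and the decay estimates on $\varphi$ and $\dot\varphi$ give
\[
\lambda_1\lambda_2 \;=\; \omega^2/\tilde\omega^2 \;=\; e^{\varphi+\dot\varphi}(1+O(e^{-t})) \;=\; 1 + O(e^{-\sigma t}).
\]
Writing $\lambda_i = 1 + x_i$, the AM--GM inequality $\lambda_1+\lambda_2 \geq 2\sqrt{\lambda_1\lambda_2}$ combined with the upper bound on $\lambda_1+\lambda_2$ yields $|x_1+x_2| \leq Ce^{-at}$, and hence also $|x_1 x_2| \leq Ce^{-at}$; consequently $(x_1-x_2)^2 = (x_1+x_2)^2 - 4 x_1 x_2 \leq Ce^{-at}$, so $|x_i| \leq Ce^{-at/2}$. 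Taking $a$ arbitrarily close to $1/4$ produces the exponent $a/2$ arbitrarily close to $1/8$, matching the range $\varepsilon < 1/8$ in the statement.

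The main obstacle I anticipate is the clean derivation of the $(\partial_t-\Delta)\mathrm{tr}_\omega\tilde\omega$ inequality: the torsion and curvature errors of order $e^{t/2}$ from Lemma \ref{lemmatildegestimates} are exactly what force $\delta > 2a + 1/2$, and combined with the requirement $\delta<1$ from Lemma \ref{key} this caps $a<1/4$. The subsequent factor-of-two loss in the eigenvalue closure step is precisely what produces the sharp threshold $\varepsilon<1/8$ in the conclusion.
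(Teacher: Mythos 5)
Your proposal matches the paper's proof in its essential structure: Lemma \ref{schwarztype} is exactly the evolution inequality $(\partial_t-\Delta)\,\tr{\omega}{\tilde\omega}\leq Ce^{t/2}-C^{-1}|\tilde\nabla g|^2_g$ you describe (once $\omega\sim\tilde\omega$ is known, the gradient term comes out with a good sign and no Phong--Sturm correction is needed); Proposition \ref{proppinch} applies the maximum principle to precisely your $Q$ and $Q'$ with the same constraints $2a+1/2<\delta<a+1/2+\sigma<1$; and Lemma \ref{lemmael} plays the role of your eigenvalue-closing step, losing a square root and turning the rate $a<1/4$ into the final $\ve<1/8$.

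The one place you differ is in the final algebraic step. The paper's Lemma \ref{lemmael} converts the two trace bounds $\lambda_1+\lambda_2\leq 2+\ve$ and $\lambda_1^{-1}+\lambda_2^{-1}\leq 2+\ve$ directly into $|\lambda_i-1|\leq 2\sqrt\ve$ by a completing-the-square argument, with no reference to the volume form. You instead use one trace bound together with the product bound $\lambda_1\lambda_2=\omega^2/\tilde\omega^2=1+O(e^{-\sigma t})$, which you correctly read off from $\dot\varphi=\log(e^t\omega^2/\Omega)-\varphi$, Lemma \ref{good}, and the decays of $\varphi$ and $\dot\varphi$. Both routes give $|x_i|\leq Ce^{-at/2}$; yours has the minor advantage that the product bound lets you deduce $\tr{\tilde\omega}\omega-2\leq Ce^{-at}$ directly from $\tr{\omega}{\tilde\omega}-2\leq Ce^{-at}$ (since $\tr{\tilde\omega}\omega=(\lambda_1\lambda_2)\tr{\omega}{\tilde\omega}$), whereas the paper needs an extra step in the $Q_2$ argument to replace $\tr{\omega}{\tilde\omega}$ by $\tr{\tilde\omega}\omega$ via the same volume ratio. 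Relatedly, when you say ``running the identical argument with $Q'$'': it is not literally identical, because $\Delta\varphi=2-\tr{\omega}{\tilde\omega}$ produces the wrong trace; the paper resolves this by inserting the factor $\tilde\omega^2/\omega^2$, and you could do the same, or simply bypass $Q'$ entirely by deriving the second trace bound from the first plus the product bound as just noted. Either way the gap is cosmetic and the overall plan is sound.
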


In this section we will always assume $t\geq T_I$, without necessarily mentioning it explicitly.
First,  we have the following  evolution inequality for $\tr{\omega}{\tilde{\omega}}$ which has the same form as the inequality for $\tr{\tilde{\omega}}{\omega}$ given by (\ref{nice}).  We will make use of both of these inequalities to prove Theorem \ref{theorempinch}.

\begin{lemma}  \label{schwarztype} We have, for a uniform $C>0$,
$$\left( \ddt{} - \Delta \right) \tr{\omega}{\tilde{\omega}} \le Ce^{t/2}-C^{-1}|\ti{\nabla} g|^2_g.$$
\end{lemma}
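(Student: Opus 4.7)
The plan is to mirror the derivation of \eqref{nice} in the proof of Lemma \ref{lemma2tr}: compute $\left(\ddt{}-\Delta\right)\tr{\omega}{\ti{\omega}}$ directly, then apply Lemma \ref{lemmatildegestimates}, Theorem \ref{metricbdthm}, and Cauchy-Schwarz to absorb the resulting torsion and curvature errors of $\ti{g}$. The quantity $\tr{\omega}{\ti{\omega}}=g^{i\ov{j}}\ti{g}_{i\ov{j}}$ has the evolving metric $g$ in the inverse position, so the opening calculation differs from that for $\tr{\ti{\omega}}{\omega}$, but the same final structure emerges.

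Using $\ddt{}g^{i\ov{j}}=g^{i\ov{p}}g^{q\ov{j}}(R_{q\ov{p}}+g_{q\ov{p}})$ from \eqref{NCRF} and $\ddt{}\ti{g}=g_S-\ti{g}$, the time derivative reduces to $\ddt{}\tr{\omega}{\ti{\omega}}=g^{i\ov{p}}g^{q\ov{j}}R_{q\ov{p}}\ti{g}_{i\ov{j}}+\tr{\omega}{\omega_S}$. For the Laplacian I use the Chern connection $\ti{\nabla}$ of $\ti{g}$, so that $\ti{g}$ is annihilated: $\partial_k\tr{\omega}{\ti{\omega}}=-g^{i\ov{q}}g^{p\ov{j}}\ti{g}_{i\ov{j}}\,\ti{\nabla}_k g_{p\ov{q}}$. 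Applying $\ti{\nabla}_{\ov{\ell}}$, expanding $\ti{\nabla}_{\ov{\ell}}g^{i\ov{q}}=-g^{i\ov{b}}g^{a\ov{q}}\ti{\nabla}_{\ov{\ell}}g_{a\ov{b}}$ (and similarly for $g^{p\ov{j}}$), and tracing with $g^{k\ov{\ell}}$ produces two positive quadratic-in-$\ti{\nabla}g$ contributions, each bounded below by $C^{-1}|\ti{\nabla}g|^2_g$ via Theorem \ref{metricbdthm}, plus a second-order piece $-g^{k\ov{\ell}}g^{i\ov{q}}g^{p\ov{j}}\ti{g}_{i\ov{j}}\,\ti{\nabla}_{\ov{\ell}}\ti{\nabla}_k g_{p\ov{q}}$. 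Commuting $\ti{\nabla}_{\ov{\ell}}\ti{\nabla}_k$ on the $(1,1)$-tensor $g_{p\ov{q}}$ introduces a $\widetilde{\mathrm{Rm}}$-contraction plus $\ti{\Gamma}\cdot\ti{\nabla}g$ corrections; substituting the standard identity $\partial_k\partial_{\ov{\ell}}g_{p\ov{q}}=-R_{k\ov{\ell}p\ov{q}}+g^{s\ov{r}}\partial_k g_{p\ov{r}}\partial_{\ov{\ell}}g_{s\ov{q}}$ then produces a Chern-Ricci term $-g^{i\ov{q}}g^{p\ov{j}}\ti{g}_{i\ov{j}}R_{p\ov{q}}$ that cancels exactly against $+g^{i\ov{p}}g^{q\ov{j}}R_{q\ov{p}}\ti{g}_{i\ov{j}}$ from the time derivative.

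What remains on the right-hand side of $\left(\ddt{}-\Delta\right)\tr{\omega}{\ti{\omega}}$ is then: (i) a good term $\le -C^{-1}|\ti{\nabla}g|^2_g$; (ii) torsion cross-terms $\ti{T}\cdot\ti{\nabla}g$ coming from the commutator correction and from the rewrites $\partial g=\ti{\nabla}g+\ti{\Gamma}g$, controlled via Cauchy-Schwarz by $\ve|\ti{\nabla}g|^2_g+C_\ve|\ti{T}|^2_{\ti{g}}|g|^2_{\ti{g}}\le\ve|\ti{\nabla}g|^2_g+C_\ve$ using $|\ti{T}|_{\ti{g}}\le C$ from Lemma \ref{lemmatildegestimates}(i); (iii) contractions of $\widetilde{\mathrm{Rm}}$, $\ov{\ti{\nabla}}\ti{T}$, and $\ti{\nabla}\ti{T}$ with $g$ and $\ti{g}$, each bounded by $Ce^{t/2}$ via Lemma \ref{lemmatildegestimates}(ii) together with Theorem \ref{metricbdthm}; (iv) the bounded term $\tr{\omega}{\omega_S}$. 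Choosing $\ve$ small enough to absorb half of the good gradient then yields the claimed inequality.

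The main obstacle is performing the commutation $\ti{\nabla}_{\ov{\ell}}\ti{\nabla}_k g_{p\ov{q}}$ carefully: since $\ti{g}$ is only Hermitian and not K\"ahler, applying $\ti{\nabla}_{\ov{\ell}}$ to the two-tensor $\ti{\nabla}_k g$ creates a proliferation of $\ti{\Gamma}$-corrections that do not vanish and must be reorganized into either torsion cross-terms (absorbable into the good gradient) or curvature-of-$\ti{g}$ terms (of size $e^{t/2}$, matching the right-hand side of the claim). The crucial algebraic point is the exact cancellation of the Chern-Ricci tensor $R_{p\ov{q}}$ of the evolving metric between the time derivative and the second-order piece: we have no pointwise bound on $R_{p\ov{q}}$ at the correct order a priori, so this cancellation is essential for the proof to close.
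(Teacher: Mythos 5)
Your overall plan — compute the evolution of $\tr{\omega}{\ti{\omega}}$ directly, keep the quadratic-in-$\ti{\nabla}g$ piece as the good term, and control the remaining curvature/torsion contractions via Lemma~\ref{lemmatildegestimates} and Theorem~\ref{metricbdthm} — is the right one and is what the paper does. But there is a genuine algebraic error at the step you rightly flag as crucial: the claimed exact cancellation of the Chern-Ricci tensor between the time-derivative piece and the second-order piece of the Laplacian does not occur for a Hermitian (non-K\"ahler) metric. The time derivative $\ddt{}g^{i\ov{j}}$ produces the \emph{first} Chern-Ricci $R_{p\ov{q}}=g^{i\ov{j}}R_{p\ov{q}i\ov{j}}=-\partial_p\partial_{\ov{q}}\log\det g$, whereas the Laplacian, after you substitute $\partial_k\partial_{\ov{\ell}}g_{p\ov{q}}=-R_{k\ov{\ell}p\ov{q}}+\dots$ and trace with $g^{k\ov{\ell}}$, produces the \emph{second} Chern-Ricci $g^{k\ov{\ell}}R_{k\ov{\ell}p\ov{q}}$. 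These two coincide only in the K\"ahler case; in general their difference is a divergence of the torsion. Concretely, what survives is (with $h^{p\ov{q}}=g^{p\ov{\ell}}g^{k\ov{q}}\ti{g}_{k\ov{\ell}}$) the commutator term $g^{i\ov{j}}h^{p\ov{q}}(R_{p\ov{q}i\ov{j}}-R_{i\ov{j}p\ov{q}})$, and this is not zero.

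The paper deals with this by explicitly retaining the commutator term and then using the curvature-torsion identity (the paper's \eqref{switch0})
$$R_{i\ov{j}p\ov{q}}=R_{p\ov{q}i\ov{j}}+g_{i\ov{s}}\de_{p}\ov{T^{s}_{qj}}+g_{r\ov{q}}\de_{\ov{j}}T^{r}_{pi},$$
together with the key observation that $T_{ij\ov{\ell}}=\ti{T}_{ij\ov{\ell}}$ (because $\omega-\ti{\omega}=\ddbar\vp$ is closed), so that $\partial T$ decomposes into $\ti{\nabla}\ti{T}$-type pieces plus $\ti{T}\cdot\ti{\nabla}g$ pieces. This yields the estimate \eqref{sette0}/\eqref{sette}: $|g^{i\ov{j}}h^{p\ov{q}}(R_{p\ov{q}i\ov{j}}-R_{i\ov{j}p\ov{q}})|\le Ce^{t/2}+\frac{1}{2C_0}|\ti{\nabla}g|^2_g$, which is exactly the extra input you need to close the argument. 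So your proposal is missing this step; without it, the evolution inequality has an uncontrolled curvature-of-$g$ term, and the conclusion cannot be reached by the route you describe.
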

\begin{proof}
We will use the shorthand $h^{i\ov{j}}=g^{i\ov{\ell}}g^{k\ov{j}} \ti{g}_{k\ov{\ell}}$.
 Note that we know already that $h, g$ and $\ti{g}$ are
all uniformly equivalent to each other.

We start by computing a formula for the evolution of $\tr{\omega}{\ti{\omega}}$. First of all, we have

\[\begin{split}
\Delta \tr{\omega}{\ti{\omega}}=&g^{i\ov{j}}\ti{\nabla}_i \ti{\nabla}_{\ov{j}}(g^{k\ov{\ell}} \ti{g}_{k\ov{\ell}})
= -g^{i\ov{j}}\ti{\nabla}_i (g^{k\ov{q}}g^{p\ov{\ell}}\ti{g}_{k\ov{\ell}}\ti{\nabla}_{\ov{j}}g_{p\ov{q}})\\
=&g^{i\ov{j}} g^{k\ov{s}}g^{r\ov{q}}g^{p\ov{\ell}}\ti{g}_{k\ov{\ell}}\ti{\nabla}_i g_{r\ov{s}} \ti{\nabla}_{\ov{j}}g_{p\ov{q}}
+g^{i\ov{j}} g^{p\ov{s}}g^{r\ov{\ell}}g^{k\ov{q}}\ti{g}_{k\ov{\ell}}\ti{\nabla}_i g_{r\ov{s}} \ti{\nabla}_{\ov{j}}g_{p\ov{q}}\\
&-g^{i\ov{j}}g^{k\ov{q}}g^{p\ov{\ell}}\ti{g}_{k\ov{\ell}}\ti{\nabla}_i \ti{\nabla}_{\ov{j}}g_{p\ov{q}}.
\end{split}\]
But
\[\begin{split}
\ti{\nabla}_i \ti{\nabla}_{\ov{j}}g_{p\ov{q}}=&
\ti{\nabla}_i\left(\de_{\ov{j}} g_{p\ov{q}}-\ov{\ti{\Gamma}^s_{jq}}g_{p\ov{s}}\right)=
\de_i\de_{\ov{j}} g_{p\ov{q}}-\ti{\Gamma}^{r}_{ip}\de_{\ov{j}}g_{r\ov{q}}-g_{p\ov{s}}\de_i \ov{\ti{\Gamma}^s_{jq}}\\
&-\ov{\ti{\Gamma}^s_{jq}}\de_i g_{p\ov{s}}+\ti{\Gamma}^{r}_{ip}\ov{\ti{\Gamma}^s_{jq}}g_{r\ov{s}}
=\ti{R}_{i\ov{j}r\ov{q}}\ti{g}^{r\ov{s}}g_{p\ov{s}}-R_{i\ov{j}p\ov{q}}+g^{r\ov{s}} \ti{\nabla}_i g_{p\ov{s}}
\ti{\nabla}_{\ov{j}} g_{r\ov{q}},
\end{split}\]
and so
\[\begin{split}
\Delta \tr{\omega}{\ti{\omega}}&=g^{i\ov{j}} g^{k\ov{s}}g^{r\ov{q}}g^{p\ov{\ell}}\ti{g}_{k\ov{\ell}}\ti{\nabla}_i g_{r\ov{s}} \ti{\nabla}_{\ov{j}}g_{p\ov{q}}+g^{i\ov{j}}g^{k\ov{q}}g^{p\ov{\ell}}\ti{g}_{k\ov{\ell}} R_{i\ov{j}p\ov{q}}
-g^{i\ov{j}}g^{p\ov{q}}\ti{R}_{i\ov{j}p\ov{q}}.
\end{split}\]
On the other hand
\begin{align*}
\frac{\de}{\de t} \tr{\omega}{\ti{\omega}}
={} & \tr{\omega}{\ti{\omega}}+\tr{\omega}{\left(\omega_S - \tilde{\omega} \right)}+
g^{i\ov{j}}h^{p\ov{q}}R_{p\ov{q}i\ov{j}}.
\end{align*}
Therefore,
\[\begin{split}
\left( \ddt{} - \Delta \right) \tr{\omega}{\ti{\omega}}
=& \tr{\omega}{\omega_S}
+g^{i\ov{j}}g^{k\ov{\ell}}\ti{R}_{i\ov{j}k\ov{\ell}}\\
&+g^{i\ov{j}}h^{p\ov{q}}(R_{p\ov{q}i\ov{j}}-R_{i\ov{j}p\ov{q}})
-g^{i\ov{j}}g^{k\ov{\ell}}h^{p\ov{q}}
\ti{\nabla}_i g_{k\ov{q}} \ti{\nabla}_{\ov{j}} g_{p\ov{\ell}}.
\end{split}\]

From Lemma \ref{lemmatildegestimates},  we conclude that
\begin{equation}\label{uno}
\left( \ddt{} - \Delta \right) \tr{\omega}{\tilde{\omega}}\le Ce^{t/2}
+g^{i\ov{j}}h^{p\ov{q}}(R_{p\ov{q}i\ov{j}}-R_{i\ov{j}p\ov{q}})-g^{i\ov{j}}g^{k\ov{\ell}}h^{p\ov{q}}
\ti{\nabla}_i g_{k\ov{q}} \ti{\nabla}_{\ov{j}} g_{p\ov{\ell}}.
\end{equation}
We also use the equivalence of $g$ and $h$ to bound
\begin{equation}\label{due}
-g^{i\ov{j}}g^{k\ov{\ell}}h^{p\ov{q}}
\ti{\nabla}_i g_{k\ov{q}} \ti{\nabla}_{\ov{j}} g_{p\ov{\ell}}\leq -C_0^{-1}|\ti{\nabla} g|^2_g.
\end{equation}
Next we have
\begin{equation}\label{sette}
|g^{i\ov{j}}h^{p\ov{q}}(R_{p\ov{q}i\ov{j}}-R_{i\ov{j}p\ov{q}})|\leq Ce^{t/2}+\frac{1}{2C_0}|\ti{\nabla} g|^2_g.
\end{equation}
Indeed, this follows from the same argument as in the proof of \eqref{sette0}, even though the tensor $h$ there is different.
Combining \eqref{uno}, \eqref{due}, \eqref{sette}, we get the desired inequality.
\end{proof}

Next we use the exponential decay of $\varphi$ (Lemma \ref{key}) and $\dot{\varphi}$ (Lemma \ref{expdotphi}) to obtain an exponential decay bound from
above for $\tr{\omega}{\tilde{\omega}}-2$
and $\tr{\ti{\omega}}{\omega}-2$.

\begin{proposition} \label{proppinch} For any $0<\ve<1/4$ there is a constant $C>0$ such that for $t \ge T_I$,
\begin{equation} \label{firsttr}
\tr{\omega}{\tilde{\omega}} - 2  \le C e^{-\ve t}
\end{equation}
and
\begin{equation} \label{secondtr}
\emph{tr}_{\tilde{\omega}}{\omega} -2 \le Ce^{-\ve t}.
\end{equation}
\end{proposition}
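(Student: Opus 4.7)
My plan is to establish the two inequalities in sequence. For (\ref{firsttr}), I apply the maximum principle to the auxiliary function
$$Q = e^{\ve t}(\tr{\omega}{\ti{\omega}}-2)-e^{\delta t}\vp,$$
with $\delta$ chosen in the interval $(1/2+2\ve,1)$, which is nonempty precisely because $\ve<1/4$. The inequality (\ref{secondtr}) will then follow from (\ref{firsttr}) via the algebraic identity $\tr{\ti{\omega}}{\omega}=\tr{\omega}{\ti{\omega}}\cdot(\omega^2/\ti{\omega}^2)$ (valid in complex dimension two) combined with a volume form comparison.

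For Step 1, combining Lemma \ref{schwarztype} with the identity $\Delta\vp=2-\tr{\omega}{\ti{\omega}}$ (from $\omega=\ti{\omega}+\ddbar\vp$), and discarding the non-positive $|\ti{\nabla}g|^2_g$ term, I obtain
$$\left(\ddt{}-\Delta\right)Q \le (\ve e^{\ve t}-e^{\delta t})(\tr{\omega}{\ti{\omega}}-2) + Ce^{(\ve+1/2)t} - \delta e^{\delta t}\vp - e^{\delta t}\dot{\vp}.$$
At a maximum $(x_0,t_0)$ with $t_0>T_I$, Lemma \ref{key} controls $|e^{\delta t_0}\vp|\le C(1+t_0)e^{(\delta-1)t_0}\le C$ since $\delta<1$, while Lemma \ref{expdotphi} with $\eta$ close to $1/2$ gives $|e^{\delta t_0}\dot{\vp}|\le Ce^{(\delta-\eta)t_0}$. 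If $\tr{\omega}{\ti{\omega}}-2\le 0$ at $(x_0,t_0)$, then $Q(x_0,t_0)\le C$ directly. Otherwise, using that $e^{\delta t_0}-\ve e^{\ve t_0}\ge\tfrac12 e^{\delta t_0}$ for $t_0$ large, the inequality at the maximum rearranges (after division by $e^{\delta t_0}$) to
$$\tr{\omega}{\ti{\omega}}-2\le Ce^{(\ve+1/2-\delta)t_0}+Ce^{-\eta t_0}+Ce^{-\delta t_0}\le Ce^{-\ve t_0},$$
because $\delta>1/2+2\ve$ and $\eta>\ve$ make each exponent at most $-\ve$. Either way $Q(x_0,t_0)\le C$, hence $Q\le C$ everywhere, and reinserting the bound on $e^{\delta t}\vp$ gives (\ref{firsttr}).

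For Step 2, the parabolic Monge-Amp\`ere equation (\ref{CRF}) gives $\omega^2=e^{-t+\dot{\vp}+\vp}\Omega$, and a short computation based on Lemma \ref{good} (as in the proof of Lemma \ref{key}) shows $e^t\ti{\omega}^2/\Omega=1+O(e^{-t})$. Hence $\omega^2/\ti{\omega}^2=e^{\dot{\vp}+\vp}(1+O(e^{-t}))$. Picking $\sigma\in(\ve,1/4)$ and combining Lemmas \ref{key} and \ref{expdotphi}, we have $\dot{\vp}+\vp\le Ce^{-\sigma t}$, so $\omega^2/\ti{\omega}^2\le 1+Ce^{-\sigma t}$. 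Multiplying by (\ref{firsttr}) yields $\tr{\ti{\omega}}{\omega}\le (2+Ce^{-\ve t})(1+Ce^{-\sigma t})\le 2+Ce^{-\ve t}$, as required.

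The main obstacle is the tight balance of exponential rates in Step 1: the curvature/torsion-driven term $Ce^{(\ve+1/2)t}$ from Lemma \ref{lemmatildegestimates} forces $\delta>1/2+2\ve$, whereas $|\vp|\le C(1+t)e^{-t}$ from Lemma \ref{key} forces $\delta<1$. These two requirements can be satisfied simultaneously only when $\ve<1/4$, matching exactly the hypothesis of the proposition; and the decay $\dot{\vp}\ge -Ce^{-\eta t}$ from Lemma \ref{expdotphi} is used at nearly full strength to control the $e^{\delta t}\dot{\vp}$ error.
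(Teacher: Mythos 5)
Your Step 1 (the bound on $\tr{\omega}{\tilde{\omega}}-2$) coincides with the paper's argument: the same auxiliary quantity $Q=e^{\ve t}(\tr{\omega}{\tilde{\omega}}-2)-e^{\delta t}\varphi$, the same use of Lemma~\ref{schwarztype}, the identity $\Delta\varphi=2-\tr{\omega}{\tilde{\omega}}$, and the decay estimates of Lemmas~\ref{key} and~\ref{expdotphi}, with $\delta$ chosen in $(1/2+2\ve,1)$ so the bad $e^{(\ve+1/2)t}$ term is dominated. (Your displayed $Ce^{-\delta t_0}$ ought to read $C(1+t_0)e^{-t_0}$, but that is still $\le Ce^{-\ve t_0}$, so nothing breaks.)

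Your Step 2 is genuinely different from the paper's and is in fact simpler. The paper proves \eqref{secondtr} by running a \emph{second} maximum-principle argument on $Q_2=e^{\ve t}(\tr{\tilde{\omega}}{\omega}-2)-e^{\delta t}\varphi$, importing the volume ratio bound $|\tilde{\omega}^2/\omega^2-1|\le Ce^{-\sigma t}$ only as an intermediate step inside that computation. You instead observe that, in complex dimension two, the identity $\tr{\tilde{\omega}}{\omega}=(\omega^2/\tilde{\omega}^2)\,\tr{\omega}{\tilde{\omega}}$ together with the upper bound $\omega^2/\tilde{\omega}^2=e^{\dot{\varphi}+\varphi}(1+O(e^{-t}))\le 1+Ce^{-\sigma t}$ (from the parabolic Monge--Amp\`ere equation \eqref{CRF}, claim \eqref{claimOmega}, and Lemmas~\ref{key} and~\ref{expdotphi}) lets one deduce \eqref{secondtr} directly from \eqref{firsttr} by multiplying the two one-sided bounds. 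This avoids a second maximum principle entirely, and since $\sigma>\ve$ the cross terms are harmless. The argument is correct; both versions yield the same conclusion, and yours is shorter and conceptually cleaner, at the price of leaning slightly harder on the two-dimensionality through the identity $\tr{\omega}{\tilde{\omega}}=\frac{\tilde{\omega}^2}{\omega^2}\tr{\tilde{\omega}}{\omega}$ (which the paper already records after Theorem~\ref{metricbdthm}).
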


\begin{proof}
Given $0<\ve<1/4$, choose $\eta>\ve$ such that $\ve+1/2+\eta<1$ (in particular $0<\eta<1/2$), and choose $\delta$ satisfying
$2\ve+1/2<\delta<\ve+1/2+\eta$ (in particular $0<\delta<1$), which we can do because $\ve<\eta$.
For \eqref{firsttr}, we compute the evolution of
$$Q_1 = e^{\ve t} (\textrm{tr}_{\omega}{\tilde{\omega}} - 2) -  e^{\delta t} \varphi.$$
From Lemma \ref{key}, it suffices to obtain a uniform upper bound for $Q_1$.
Compute using Lemma \ref{schwarztype} and the fact that $\Delta \varphi = 2 - \tr{\omega}{\tilde{\omega}}$,
\begin{equation}\label{evolveQ}\begin{split}
\left( \ddt{} - \Delta \right) Q _1\le & C e^{(\ve+1/2)t} + \ve e^{\ve t} (\tr{\omega}{\tilde{\omega}} - 2) - \delta e^{\delta t} \varphi  - e^{\delta t} \dot{\varphi} \\
& -  e^{\delta t} (\tr{\omega}{\tilde{\omega}} -2)\\
 \le  & C e^{(\ve+1/2)t}   +  Ce^{(\delta - \eta) t}  -  e^{\delta t} (\tr{\omega}{\tilde{\omega}} -2),
\end{split}\end{equation}
where in the last line we have used the lower bound $\dot{\varphi} \ge -Ce^{-\eta t}$  from  Lemma \ref{expdotphi} (since $0<\eta<1/2$),  Lemma \ref{key} and the fact that $\tr{\omega}{\tilde{\omega}} \le C$.

But we have $\delta - \eta<\ve + 1/2$ and so
 at a maximum point of $Q_1$,
$$0 \le C e^{(\ve+1/2)t}-  e^{\delta t} (\tr{\omega}{\tilde{\omega}} -2),$$
and hence
$$e^{\ve t} (\tr{\omega}{\tilde{\omega}} -2) \le C e^{(2\ve - \delta + 1/2)t} \le C.$$
since we chose $\delta$ so that $2\ve - \delta +1/2 <0$.  This implies that $Q_1$ is bounded from above at any maximum point, and completes the proof of \eqref{firsttr}.

The proof  of \eqref{secondtr} is slightly more complicated.  First recall that (see \eqref{nice}) for $t \ge T_I$,
$$\left( \ddt - \Delta \right) \tr{\tilde{\omega}}{\omega} \le C e^{t/2}.$$
Fix $\sigma$ with $0< \ve < \sigma < 1/4$.  From Lemma \ref{expdotphi} we have
\begin{equation} \label{phidb}
-C e^{-\sigma t} \le \dot{\varphi} \le C e^{-\sigma t}.
\end{equation}

Now choose $\delta$ with $1/2 + 2 \ve < \delta < 1/2 + \ve + \sigma$, which we can do because $\ve<\sigma$.  Then set
$$Q_2 = e^{\ve t} (\tr{\tilde{\omega}}{\omega} -2) - e^{\delta t}\varphi.$$
Compute
\begin{equation} \label{eqnub1}
\begin{split}
\left( \ddt{} - \Delta \right)Q_2 \le  & C e^{(\ve+1/2)t} - e^{\delta t} \dot{\varphi} - e^{\delta t} (\tr{\omega}{\tilde{\omega}}-2) \\
\le & Ce^{(\ve+1/2)t} - e^{\delta t} (\tr{\omega}{\tilde{\omega}} -2),
\end{split}
\end{equation}
using \eqref{phidb} and the fact that $\delta-\sigma <1/2+\ve$.
We now wish to replace the term $\tr{\omega}{\tilde{\omega}}$ by the sum of $\tr{\tilde{\omega}}{\omega}$ and a small error term.
\begin{equation} \label{eqnub2}
\tr{\omega}{\tilde{\omega}} = \frac{ \tilde{\omega}^2}{\omega^2} \tr{\tilde{\omega}}{\omega} = \tr{\tilde{\omega}}{\omega} + \left(  \frac{\tilde{\omega}^2}{\omega^2} -1 \right) \tr{\tilde{\omega}}{\omega}.
\end{equation}
Then from \eqref{CRF}, \eqref{claimOmega} and Lemma \ref{key},
$$\dot{\varphi} = \log  \frac{\omega^2}{\tilde{\omega}^2} + O(e^{-\sigma t}),$$
and so using \eqref{phidb} again
$$\left| \frac{\tilde{\omega}^2}{\omega^2} -1 \right| = \left| e^{ O(e^{-\sigma t})}  -1 \right| \le C e^{-\sigma t},$$
which implies that, since $\tr{\tilde{\omega}}{\omega}$ is uniformly bounded,
\begin{equation} \label{eqnub3}
\left| \left(  \frac{\tilde{\omega}^2}{\omega^2} -1 \right) \tr{\tilde{\omega}}{\omega} \right| \le Ce^{-\sigma t}.
\end{equation}
Then combining  \eqref{eqnub1}, \eqref{eqnub2}, \eqref{eqnub3} and again using the fact that $e^{(\delta - \sigma)t} \le e^{(\ve+1/2)t}$ we obtain for $t\ge T_I$,
\begin{equation*}
 \begin{split}
\left( \ddt{} - \Delta \right)Q_2 \le  & Ce^{(\ve+1/2)t} - e^{\delta t} (\tr{\tilde{\omega}}{\omega}-2) - e^{\delta t} \left( \frac{\tilde{\omega}^2}{\omega^2} -1 \right) \tr{\tilde{\omega}}{\omega} \\
\le  & C e^{(\ve+1/2)t}  - e^{\delta t} (\tr{\tilde{\omega}}{\omega}-2).
\end{split}
\end{equation*}
Then at the maximum point of $Q_2$ (occurring at a time $t>T_I$) we have
$$e^{\ve t} (\tr{\tilde{\omega}}{\omega}-2)  \le C e^{(2\ve + 1/2 - \delta) t} \le C',$$
since we chose $\delta>1/2+2\ve$.  This shows that $Q_2$ is bounded from above and completes the proof.
\end{proof}

To show that $\omega$ and $\tilde{\omega}$ approach each other exponentially fast we use an elementary lemma:

\begin{lemma} \label{lemmael}
Let $\ve>0$ be small.
Suppose that
$$\emph{tr}_{\omega}{\tilde{\omega}} - 2 \le \ve \quad \textrm{and} \quad \emph{tr}_{\tilde{\omega}}{\omega} - 2 \le \ve.$$  Then
$$(1-2\sqrt{\ve}) \tilde{\omega} \le \omega \le (1+2\sqrt{\ve}) \tilde{\omega}.$$
\end{lemma}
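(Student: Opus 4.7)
The statement is pointwise, so at each point I would simultaneously diagonalize $\omega$ and $\tilde{\omega}$. Choose a basis in which $\tilde{\omega}$ is represented by the identity and $\omega$ is diagonal with eigenvalues $\lambda_1,\lambda_2>0$. Then the hypotheses translate into the two scalar inequalities
\begin{equation*}
\lambda_1+\lambda_2 \le 2+\ve, \qquad \frac{1}{\lambda_1}+\frac{1}{\lambda_2}\le 2+\ve.
\end{equation*}
The conclusion is equivalent to $1-2\sqrt{\ve}\le \lambda_i\le 1+2\sqrt{\ve}$ for $i=1,2$, so the whole problem reduces to a one-variable estimate in elementary algebra.

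The key step is to extract a good lower bound on the product $\lambda_1\lambda_2$. Rewriting the second hypothesis as $\lambda_1+\lambda_2\le (2+\ve)\lambda_1\lambda_2$ and combining with the AM-GM inequality $\lambda_1+\lambda_2\ge 2\sqrt{\lambda_1\lambda_2}$ gives $\sqrt{\lambda_1\lambda_2}\ge 2/(2+\ve)$, hence $\lambda_1\lambda_2\ge 4/(2+\ve)^2\ge 1-\ve$ (the last inequality is immediate for $\ve$ small).

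With this in hand, for each $i$ I use $\lambda_j=\lambda_1\lambda_2/\lambda_i\ge (1-\ve)/\lambda_i$, so that
\begin{equation*}
\lambda_i+\frac{1-\ve}{\lambda_i}\le \lambda_1+\lambda_2\le 2+\ve,
\end{equation*}
that is, $\lambda_i^2-(2+\ve)\lambda_i+(1-\ve)\le 0$. Solving this quadratic, both $\lambda_1$ and $\lambda_2$ lie in the interval
\begin{equation*}
\left[\,\frac{(2+\ve)-\sqrt{8\ve+\ve^2}}{2},\ \frac{(2+\ve)+\sqrt{8\ve+\ve^2}}{2}\,\right].
\end{equation*}
For $\ve$ small enough, $\sqrt{8\ve+\ve^2}\le 3\sqrt{\ve}$ and $\ve/2\le \tfrac{1}{2}\sqrt{\ve}$, so this interval sits inside $[1-2\sqrt{\ve},1+2\sqrt{\ve}]$, as required.

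There is really no significant obstacle here: once the product bound $\lambda_1\lambda_2\ge 1-\ve$ is in place, the rest is a one-line quadratic inequality. The only thing to watch is that ``$\ve$ small'' is used in two places (to pass from $4/(2+\ve)^2$ to $1-\ve$, and to absorb the $\ve/2$ term into $2\sqrt{\ve}$), but both are harmless and the statement explicitly takes $\ve$ small.
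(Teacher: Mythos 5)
Your proof is correct and takes essentially the same approach as the paper: diagonalize $\tilde{g}$ and $g$ simultaneously, translate the hypotheses into scalar inequalities on the eigenvalues $\lambda_1,\lambda_2$, and reduce to a quadratic inequality in a single eigenvalue. The only difference is an intermediate algebraic step: the paper substitutes the bound on $1/\lambda_2$ directly into the bound on $\lambda_1+\lambda_2$ to obtain $\lambda_1^2-(2+\ve)\lambda_1+1\le 0$, whereas you first extract the lower bound $\lambda_1\lambda_2\ge 1-\ve$ via AM-GM and then substitute, obtaining the slightly looser quadratic $\lambda_i^2-(2+\ve)\lambda_i+(1-\ve)\le 0$. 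Both quadratics yield the interval $[1-2\sqrt{\ve},1+2\sqrt{\ve}]$ for $\ve$ small, so the difference is cosmetic; your product-bound step is arguably a cleaner way to see the symmetry between $\lambda_1$ and $\lambda_2$, but the paper's constant-term $1$ quadratic is marginally tighter.
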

\begin{proof} We may work at a point at which $\tilde{g}$ is the identity and $g$ is diagonal with eigenvalues $\lambda_1, \lambda_2$.
 Then the lemma amounts to proving that if $\lambda_1, \lambda_2>0$ satisfy
$$\lambda_1 +\lambda_2 \le 2+\ve, \quad \frac{1}{\lambda_1} + \frac{1}{\lambda_2} \le 2+\ve,$$
then
$$1-2\sqrt{\ve} \le \lambda_i \le 1+ 2\sqrt{\ve}, \quad \textrm{for $i=1,2$}. $$
By symmetry, we only have to prove the estimate for $\lambda_1$.
We have
\begin{equation} \label{ll1}
\lambda_1 \le 2+\ve - \lambda_2, \quad \frac{1}{\lambda_2} \le\frac{(2+\ve) \lambda_1 - 1}{\lambda_1},
\end{equation}
which implies in particular that $(2+\ve) \lambda_1 - 1>0$.
This last inequality implies that
$$- \lambda_2 \le \frac{-\lambda_1}{(2+\ve)\lambda_1-1}.$$
Then in \eqref{ll1},
$$\lambda_1 \le 2+ \ve - \frac{\lambda_1}{(2+\ve)\lambda_1-1}.$$
Multiplying this by $(2+\ve) \lambda_1 - 1>0$ and simplifying, we get
$$\lambda_1^2 - (2+\ve)\lambda_1 + 1 \le 0.$$
Completing the square, we obtain
$$(\lambda_1 - (1+ \ve/2))^2 \le \ve + \ve^2/4.$$
Then, assuming $\ve>0$ is  smaller than some universal constant,
$$1-2\sqrt{\ve} \le \lambda_1 \le 1+2\sqrt{\ve},$$
as required.
\end{proof}

Finally, we complete the proof of Theorem \ref{theorempinch}:

\begin{proof}[Proof of Theorem \ref{theorempinch}]
Combine Proposition \ref{proppinch} with Lemma \ref{lemmael}.\end{proof}

\section{A third order estimate} \label{sectioncalabi}

In this section we prove a ``Calabi-type'' estimate for the first derivative of the evolving metric.
One might guess that the natural quantity to consider is $| \tilde{\nabla} g|^2_g$, following the computation in \cite{SW}, say.  However, we encountered difficulties in obtaining a good bound for this quantity because of the non-K\"ahlerity of the reference metrics $\tilde{\omega}$.  Our idea then is to take a \emph{K\"ahler} reference metric.  Of course, in general $M$ may not admit a global K\"ahler metric, so we work locally on an open set where the bundle $M$ is trivial.

We obtain a Calabi estimate on this open set, using a cut-off function and a local reference K\"ahler metric.  Our computations are based on those in  \cite{ShW}.  However, the situation here is complicated by the fact that the metrics are collapsing in the fiber directions, and we will need to make careful use of the bounds from Lemma \ref{lemmatildegestimates}.

Fix a point $y\in S$ and neighborhood $B$ of $y$ over which $\pi$ is trivial, so $U=\pi^{-1}(B)\cong B\times E$.
Over $U$ we have $\omega_E=i\alpha\wedge\ov{\alpha}$ a
$d$-closed semi-flat $(1,1)$-form constructed as in the proof of Lemma \ref{good}. Therefore, $\hat{\omega}=\omega_E+\omega_S$ is a
semi-flat product K\"ahler metric on $U$. From now on we work exclusively on $U$, where we define $\mathcal{S}=|\hat{\nabla} g|^2_g$.  Fix a smaller open set $V \subset\subset U$.

\begin{theorem}\label{c3}
On $V$ we have
$$\mathcal{S}\leq Ce^{2t/3},$$
for all $t>0$.
\end{theorem}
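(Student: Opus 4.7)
The plan is to adapt the Calabi-type third-order estimate of \cite{ShW} using the local K\"ahler reference metric $\hat\omega=\omega_E+\omega_S$ on $U$, and to apply the maximum principle to a suitably localized auxiliary function. Throughout I will exploit the uniform equivalence $C^{-1}\tilde\omega\le\omega\le C\tilde\omega$ from Theorem \ref{metricbdthm} and the torsion/curvature estimates for $\tilde g$ from Lemma \ref{lemmatildegestimates}.

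First I would compute the parabolic evolution inequality for $\mathcal{S}=|\hat\nabla g|^2_g$ under \eqref{NCRF}. Since $\hat g$ is K\"ahler, with vanishing torsion and curvature uniformly bounded on compact subsets of $U$, a Bochner-type calculation of the same type as in \cite{ShW} produces the good negative piece $-C^{-1}\bigl(|\hat\nabla\hat\nabla g|^2_g+|\hat\nabla\bar{\hat\nabla}g|^2_g\bigr)$, plus error terms arising from: (i) the Chern curvature of $g$ together with its commutator with $\hat R$, (ii) the torsion of $g$, which equals $\tilde T$ and so satisfies $|T|_g\le C$ with derivatives controlled via Lemma \ref{lemmatildegestimates}(ii)--(iii), and (iii) the fact that the flow reference form $\tilde g$ is time-dependent, producing an $\omega_S-\tilde g$ contribution. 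After a Cauchy--Schwarz absorption of the mixed second-derivative error terms into the good piece, the evolution takes the schematic form
\begin{equation*}
\left(\ddt-\Delta\right)\mathcal{S}\ \le\ -C^{-1}\bigl(|\hat\nabla\hat\nabla g|^2_g+|\hat\nabla\bar{\hat\nabla}g|^2_g\bigr)+C\mathcal{S}+Ce^{t}.
\end{equation*}

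Next I would fix a cutoff $\eta\in C^\infty_c(U)$ with $\eta\equiv 1$ on $V$ and apply the maximum principle to
\begin{equation*}
Q\ =\ e^{-2t/3}\eta^2\mathcal{S}+B\,\tr{\tilde\omega}{\omega},
\end{equation*}
where $B$ is a large constant to be chosen. The second term is bounded by Theorem \ref{metricbdthm}, and by Lemma \ref{lemma2tr} its evolution contributes the crucial $-C^{-1}|\tilde\nabla g|^2_g$. Comparing the two Chern connections $\hat\nabla$ and $\tilde\nabla$ on $U$ (their difference is the tensor $\tilde\Gamma-\hat\Gamma$, whose $g$-norm is uniformly controlled on compact subsets of $U$ thanks to Lemma \ref{lemmatildegestimates}) converts $|\tilde\nabla g|^2_g$, modulo an $O(e^{t/2})$ error, into $c\,\mathcal{S}-C$ for some $c>0$; choosing $B$ large then dominates the $C\mathcal{S}$ term from Step 1. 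The standard cutoff derivative terms $\Delta(\eta^2)\mathcal{S}$ and $2\,\mathrm{Re}\langle\nabla\eta^2,\nabla\mathcal{S}\rangle_g$ are absorbed into the good third-order piece using the Kato-type inequality $|\hat\nabla\mathcal{S}|^2_g\le C\mathcal{S}\,|\hat\nabla\hat\nabla g|^2_g$. At an interior space-time maximum of $Q$ with $t_0>T_I$, all of these estimates combine to give $e^{-2t/3}\mathcal{S}\le C$ at that point, hence on $V$.

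The main obstacle is the exponential size of the torsion and curvature contributions of the reference metric $\tilde g$: by Lemma \ref{lemmatildegestimates} we have $|\widetilde{\mathrm{Rm}}|_{\tilde g}\le Ce^{t/2}$, $|\bar\partial\tilde T|_{\tilde g}\le Ce^{t/2}$, and $|\bar{\tilde\nabla}\bar\partial\tilde T|_g\le Ce^{t}$, so many of the natural error terms in the evolution of $\mathcal{S}$ are exponentially large and can only be reduced to the $Ce^{t}$ above after repeated Cauchy--Schwarz absorption into the good third-order term. The specific exponent $2/3$ then emerges from the careful balance between this $e^{t}$ error, the weight $e^{-2t/3}$ on $\mathcal{S}$, and the damping from the $-C^{-1}\mathcal{S}$ supplied by $B\,\tr{\tilde\omega}{\omega}$: effectively one obtains $\mathcal{S}^{3/2}\le Ce^{t}$ at the maximum, which yields $\mathcal{S}\le Ce^{2t/3}$ as claimed.
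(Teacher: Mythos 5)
Your overall strategy is the right one and correctly identifies the two reference connections ($\hat\nabla$ on the local K\"ahler product and $\tilde\nabla$), but there are two genuine gaps, one computational and one structural, that make the argument as written fail.

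The computational gap is the claimed evolution inequality
$(\ddt-\Delta)\mathcal{S}\le -C^{-1}(|\hat\nabla\hat\nabla g|^2_g+|\hat\nabla\bar{\hat\nabla}g|^2_g)+C\mathcal{S}+Ce^{t}$.
This is too optimistic: the evolution formula from \cite{ShW} contains the term $g^{-1}g^{-1}g^{-1}(\nabla T)\,\Psi\,\ov{\Psi}\,g^{-1}g$, and since $T_{ij\ov\ell}=\tilde T_{ij\ov\ell}$ one has, writing $\Psi=\Gamma-\hat\Gamma$ and $H=\tilde\Gamma-\hat\Gamma$,
$$\nabla_{\ov\ell}T_{ik\ov j}=\tilde\nabla_{\ov\ell}\tilde T_{ik\ov j}-\ov{\Psi^q_{\ell j}}\,\tilde T_{ik\ov q}+\ov{H^q_{\ell j}}\,\tilde T_{ik\ov q},$$
so $|\nabla T|_g\le Ce^{t/2}+C\mathcal{S}^{1/2}$. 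The piece $C\mathcal{S}^{1/2}$ is unavoidable because $|\tilde T|_g\le C$ is \emph{not} small; multiplying by $|\Psi|^2_g=\mathcal{S}$ produces an error $C\mathcal{S}^{3/2}$. This cubic-in-$\Psi$ term is purely first order in $g$ (no second covariant derivative appears) and therefore cannot be absorbed by Cauchy--Schwarz into the good terms $-|\nabla\Psi|^2_g-|\ov\nabla\Psi|^2_g$. The correct schematic form is
$(\ddt-\Delta)\mathcal{S}\le C(e^{t/2}\mathcal{S}+e^t\mathcal{S}^{1/2}+\mathcal{S}^{3/2})-\tfrac12|\ov\nabla\Psi|^2_g-\tfrac12|\nabla\Psi|^2_g$.
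Relatedly, your assertion that $\tilde\Gamma-\hat\Gamma$ has $g$-norm ``uniformly controlled'' is incorrect: from Lemma \ref{lemmatildegestimates} one only gets $|H|_g\le Ce^{t/2}$ in bundle coordinates (this is exactly the content of the lemma the paper proves before the evolution computation).

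The structural gap is the choice of auxiliary function. With the linear quantity $Q=e^{-2t/3}\eta^2\mathcal{S}+B\,\tr{\tilde\omega}{\omega}$, the second term supplies at best a damping $-C^{-1}\mathcal{S}$, which is linear in $\mathcal{S}$. To beat the $C\mathcal{S}^{3/2}$ error after multiplying by the weight $e^{-2t/3}\eta^2$, you would need to know a priori that $e^{-2t/3}\mathcal{S}^{1/2}$ is bounded at the maximum point, i.e. $\mathcal{S}\le Ce^{4t/3}$, which is not available. (The ``balance $\mathcal{S}^{3/2}\le Ce^t$'' you suggest would be the right heuristic if the damping were quadratic, but it is not.) The paper resolves this by using a Phong--Sturm type quotient,
$$Q=\rho^2\frac{e^{-2t/3}\mathcal{S}}{K-\tr{\hat\omega_t}{\omega}}+\tr{\hat\omega_t}{\omega},$$
with $\hat\omega_t=e^{-t}\omega_E+\omega_S$. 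Two features are essential here. First, the paper works with $\tr{\hat\omega_t}{\omega}$ (a time-dependent K\"ahler reference) rather than $\tr{\tilde\omega}{\omega}$: this gives $(\ddt-\Delta)\tr{\hat\omega_t}{\omega}\le -C^{-1}\mathcal{S}+Ce^{t/2}$ \emph{directly} in terms of $\mathcal{S}=|\hat\nabla g|^2_g$, whereas $\tr{\tilde\omega}{\omega}$ only produces $|\tilde\nabla g|^2_g$, which you would then have to convert back to $\mathcal{S}$ up to an $O(e^t)$ loss from $|H|_g\sim e^{t/2}$. Second, because of the quotient, when $(\ddt-\Delta)$ lands on $\tr{\hat\omega_t}{\omega}$ inside the fraction, it produces the quadratic good term $-\rho^2 e^{-2t/3}\mathcal{S}^2/(CK^2)$. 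At the maximum, assuming $e^{-2t/3}\mathcal{S}\ge K$ (so $e^t\le\mathcal{S}^{3/2}K^{-3/2}$) and $\mathcal{S}\gg K^4\gg C^8$, this quadratic term dominates $e^{-2t/3}\mathcal{S}^{3/2}$, $e^{-t/6}\mathcal{S}$, and $e^{t/3}\mathcal{S}^{1/2}$. This is where the specific exponent $2/3$ is really forced. Without the Phong--Sturm structure the argument collapses at precisely the $\mathcal{S}^{3/2}$ step.
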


By compactness, we obtain the same bound in any such neighborhood $V$.
Recall that  $\omega_{\mathrm{flat},y}$ denotes the unique flat metric on the fiber $E_y=\pi^{-1}(y)$ in the K\"ahler class $[\omega_0|_{E_y}]$.
Exactly as in \cite[Lemma 6.9]{SW} (see also \cite{G2}, \cite[Theorem 1.1]{GTZ}, \cite[Proposition 5.8]{FZ}) we have that:

\begin{corollary}\label{fiberconv}
For any $y\in S$, we have on $E_y$,
$$e^t\omega(t)|_{E_y}\to \omega_{\mathrm{flat},y}$$
exponentially fast in the $C^1(E_y,g_0)$ topology. Moreover, the convergence is uniform in $y\in S$.
\end{corollary}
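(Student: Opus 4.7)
The plan is to combine the pinching estimate of Theorem~\ref{theorempinch} (giving the $C^0$ statement) with the local Calabi estimate of Theorem~\ref{c3} (controlling the fiber first derivative). First I fix $y \in S$ and work in a trivializing neighborhood $U = \pi^{-1}(B) \cong B \times E$ as in Section~\ref{sectioncalabi}, using product holomorphic coordinates $(z^2, z^1)$ with $z^1$ along the fiber. In these coordinates, by the proof of Lemma~\ref{good}, $\of|_{E_y} = u\,\sqrt{-1}\,dz^1\wedge d\bar z^1$ for a constant $u>0$, so $e^t\tilde\omega|_{E_y}=\omega_{\mathrm{flat},y}$ identically. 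Since $E_y$ has complex dimension one, restricting Theorem~\ref{theorempinch} to $E_y$ becomes a scalar inequality on the single $(1,1)$-coefficient and immediately yields
$$\bigl| e^t\omega(t)|_{E_y} - \omega_{\mathrm{flat},y}\bigr|_{g_0} \leq Ce^{-\varepsilon t},$$
which is the $C^0$ part of the corollary.

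For the $C^1$ statement it suffices to bound $\partial_1 g_{1\bar 1}|_{E_y}$, since $g_{1\bar 1}$ is real and hence $|\partial_{\bar 1} g_{1\bar 1}| = |\partial_1 g_{1\bar 1}|$. In the product coordinates the semi-flat K\"ahler metric $\hat g = \omega_E + \omega_S$ of Section~\ref{sectioncalabi} is a diagonal product with constant coefficient in the fiber direction, so $\hat\nabla_1 g_{1\bar 1} = \partial_1 g_{1\bar 1}$. From Lemma~\ref{lemmatildegestimates} and the equivalence $g\sim\tilde g$ of Theorem~\ref{metricbdthm}, the inverse metric satisfies $g^{1\bar 1} \geq C^{-1}e^t$ while all other components of $g^{-1}$ remain uniformly bounded. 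Isolating the diagonal-dominant contribution in
$$\mathcal{S} \;=\; g^{k\bar\ell}g^{i\bar q}g^{p\bar j}\,\hat\nabla_k g_{i\bar j}\,\overline{\hat\nabla_\ell g_{p\bar q}} \;\leq\; Ce^{2t/3}$$
gives $e^{3t}|\partial_1 g_{1\bar 1}|^2 \leq Ce^{2t/3}$, hence $|e^t\partial_1 g_{1\bar 1}| \leq Ce^{-t/6}$. Combined with the $C^0$ bound, this produces the exponentially fast $C^1(E_y,g_0)$ convergence on the fiber.

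Uniformity in $y$ will follow by covering $S$ with finitely many trivializing neighborhoods and invoking the compactness of $M$ to see that the $C^1(E_y, g_0)$ norm is equivalent, with constants independent of $y$, to the local-coordinate $C^1$ norm on the fiber. I do not foresee any serious obstacle: Theorems~\ref{theorempinch} and~\ref{c3} carry the analytic weight, and the overall strategy closely follows \cite[Lemma~6.9]{SW} and the related arguments in \cite{G2,GTZ,FZ} cited in the statement. The one step requiring real care is the component-extraction above, since $g$ is only \emph{approximately} diagonal in the $(z^2,z^1)$ coordinates (with off-diagonal entries of order $e^{-t}$ by Lemma~\ref{lemmatildegestimates}); one must either diagonalize $g$ pointwise and track the change-of-basis errors, or argue directly that the $(g^{1\bar 1})^3|\partial_1 g_{1\bar 1}|^2$ piece dominates $\mathcal{S}$ up to subleading terms that are themselves absorbable.
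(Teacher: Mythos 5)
Your strategy matches the paper's: restrict the Calabi estimate $\mathcal{S}\le Ce^{2t/3}$ of Theorem~\ref{c3} to the fiber, rescale using $g|_{E_y}\sim e^{-t}g_E$, and obtain an $e^{-t/6}$ decay rate for the first derivative of $e^t g|_{E_y}$. The step you flag as requiring care—extracting the fiber component of $\mathcal{S}$ when $g$ is only approximately diagonal—is exactly where your write-up stops short of a proof, and it is worth seeing how the paper avoids the issue. Following \cite[p.~440]{To}, the paper works intrinsically on the fiber: since $\hat g=g_E+g_S$ is a product with flat fiber factor, $\nabla_{\hat g|_{E_y}}=\nabla_{g_E}$ on $E_y$, and
$$|\nabla_{g_E}(e^t g|_{E_y})|^2_{g_E}=e^{-t}|\nabla_{g_E}(g|_{E_y})|^2_{e^{-t}g_E}\le Ce^{-t}\,|\nabla_{\hat g|_{E_y}}(g|_{E_y})|^2_{g}\le Ce^{-t}\mathcal{S}\le Ce^{-t/3}.$$
The inequality $|\nabla_{\hat g|_{E_y}}(g|_{E_y})|^2_{g}\le\mathcal{S}$ is a Schur-complement fact: the intrinsic $g|_{E_y}$-norm of the fiber restriction of $\hat\nabla g$ is dominated by $|\hat\nabla g|^2_g$, and the exact coefficient that comes out is $(g_{1\ov{1}})^{-3}$, not the $(g^{1\ov{1}})^3$ you wrote. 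These are uniformly comparable (Theorem~\ref{metricbdthm} plus $\tilde g_{1\ov{2}}=O(e^{-t})$, so $g^{1\ov{1}}g_{1\ov{1}}$ is bounded), so your extraction is salvageable, but that comparison should be stated rather than deferred.

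A secondary difference: you get the $C^0$ statement from Theorem~\ref{theorempinch}, which works, but the paper derives it from the $C^1$ bound together with the cohomological observation that $e^tg|_{E_y}$ and $g_{\mathrm{flat},y}$ are cohomologous K\"ahler metrics on the torus $E_y$ (so their difference has zero average, and a gradient bound already controls the sup norm), quoting \cite[Lemma 6.9]{SW}. Either route is fine for the corollary; the paper's makes the fiber statement a self-contained consequence of the Calabi estimate alone.
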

\begin{proof}
We use an idea from \cite[p. 440]{To}. Since $g|_{E_y}$ is uniformly equivalent to $e^{-t}\hat{g}|_{E_y}=e^{-t}g_E$,
we conclude that
\[\begin{split}
|\nabla_{g_E} (e^t g|_{E_y})|^2_{g_E}&=e^{-t}|\nabla_{g_E} (g|_{E_y})|^2_{e^{-t}g_E}
\leq Ce^{-t} |\nabla_{\hat{g}|_{E_y}} (g|_{E_y})|^2_{g}\\
&\leq Ce^{-t}\mathcal{S}\leq Ce^{-t/3},
\end{split}\]
using Theorem \ref{c3}. But on $E_y$, $g_{\mathrm{flat},y}$ is a constant multiple of $g_E$, and so
$$|\nabla_{g_E} (e^t g|_{E_y}-g_{\mathrm{flat},y})|^2_{g_E}\leq Ce^{-t/3}.$$
The rest of the proof follows easily, and exactly as in \cite[Lemma 6.9]{SW}, since $e^tg|_{E_y}$ and $g_{\textrm{flat},y}$ lie in the same K\"ahler class on $E_y$.
\end{proof}

Before we start the proof of Theorem \ref{c3}, we need some preliminary calculations.
Denote by $\Psi^i_{jk}=\Gamma^i_{jk}-\hat{\Gamma}^i_{jk}$, the difference of the Christoffel symbols of $g$ and $\hat{g}$.
It is a tensor which satisfies $|\Psi|^2_g=\mathcal{S}$.
The evolution of $\mathcal{S}$ is computed in \cite[(3.4)]{ShW}, generalizing calculations of \cite{Y1, Ca, PSS}, which gives
\begin{equation}\label{long}
\begin{split}
	\left(\ddt - \Delta\right) \mathcal{S} = & \mathcal{S}- |\ov{\nabla} \Psi|^2_g - |\nabla \Psi|^2_g \\
		&+ g^{i\ov{j}}g^{r\ov{s}}g^{a\ov{b}}\left( \nabla_r \ov{T_{bj \ov{a}}} + \nabla_{\ov{b}} T_{ar\ov{j}}
			\right) \Psi^k_{i p} \ov{\Psi^\ell_{s q}}g^{p\ov{q}}g_{k\ov{\ell}} \\
		&+ g^{i\ov{j}}g^{r\ov{s}}g^{a\ov{b}}\left( \nabla_r \ov{T_{bj\ov{a}}}
			+ \nabla_{\ov{b}} T_{ar\ov{j}}
			\right) \Psi_{pi}^k \ov{\Psi_{qs}^\ell}	g^{p\ov{q}}g_{k\ov{\ell}} \\
		& - g^{a\ov{b}}\left( \nabla_k \ov{T_{bs\ov{a}}} + \nabla_{\ov{b}} T_{ak\ov{s}}
			\right) \Psi_{ip}^k \ov{\Psi_{jq}^s} g^{i\ov{j}}g^{p\ov{q}}\\
		&- 2 \mathrm{Re} \bigg[ g^{r\ov{s}}\big(
			\nabla_i \nabla_p \ov{T_{s \ell\ov{r}}}
			+ \nabla_i \nabla_{\ov{s}} T_{rp\ov{\ell}}\\
		&	- T_{ir}^a R_{a\ov{s}p\ov{\ell}}
			+  g_{k\ov{\ell}}\nabla_r \hat{R}_{i\ov{s} p}{}^k
			\big) \ov{\Psi_{jq}^\ell} g^{i\ov{j}} g^{p\ov{q}}\bigg],
\end{split}\end{equation}
where $\hat{R}_{i\ov{s} p}{}^k$ is the curvature tensor of $\hat{g}$.
We are going to bound each of these terms separately. The key difference from the calculation in \cite{ShW} is that in our case the torsion $T_{ij\ov{\ell}}$
of $g$ does not equal the torsion of $\hat{g}$ (which here is zero), but rather the torsion $\ti{T}_{ij\ov{\ell}}$ of $\ti{g}$.

Therefore we let $\ti{\Psi}^i_{jk}=\Gamma^i_{jk}-\ti{\Gamma}^i_{jk}$ and $H^i_{jk}=\ti{\Gamma}^i_{jk}-\hat{\Gamma}^i_{jk}=\Psi^i_{jk}-\ti{\Psi}^i_{jk}$.

\begin{lemma}\label{bounds} For all $t\geq T_I$, we have that
\begin{equation}\label{use1}
|H|_g\leq Ce^{t/2}.
\end{equation}
\begin{equation}\label{use2}
|\db H|_g\leq Ce^{t/2}.
\end{equation}
\begin{equation}\label{use3}
|\ti{\nabla}H|_g\leq Ce^{t}.
\end{equation}
\end{lemma}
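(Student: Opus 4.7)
The plan is to exploit two facts. First, $H^i_{jk} = \tilde\Gamma^i_{jk} - \hat\Gamma^i_{jk}$ is a genuine tensor (difference of two Chern connections), so all three bounds are coordinate-free statements about tensor norms. Second, by Theorem \ref{metricbdthm} the metrics $g$ and $\tilde g$ are uniformly equivalent, so $|\cdot|_g$ and $|\cdot|_{\tilde g}$ are interchangeable up to multiplicative constants, and the pointwise componentwise estimates already established in (and during the proof of) Lemma \ref{lemmatildegestimates} deliver the $\tilde\Gamma$-side once we match them with the companion $\hat g$-quantities. The crucial structural property of $\hat g = \omega_E+\omega_S$ is that, in the local product coordinates $(z^1,z^2)$ on $U\cong B\times E$, the fiber piece $\omega_E = i\,dz^1\wedge d\bar z^1$ has constant coefficients while $\omega_S$ depends only on $z^2$. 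Consequently every Christoffel symbol of $\hat g$ vanishes except $\hat\Gamma^2_{22}=\hat g^{2\bar 2}\partial_2\hat g_{2\bar 2}$, and the only nonzero component of the curvature tensor $\widehat{\mathrm{Rm}}$ is $\hat R_{2\bar 2 2\bar 2}$; both are uniformly bounded.

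For \eqref{use1}, the only surviving summand in $|\hat\Gamma|_g^2$ is $g_{2\bar 2}(g^{2\bar 2})^2|\hat\Gamma^2_{22}|^2$, which is $O(1)$ because $g_{2\bar 2}$ and $g^{2\bar 2}$ are both $O(1)$ by Theorem \ref{metricbdthm} together with Lemma \ref{lemmatildegestimates}. The companion bound $|\tilde\Gamma|_{\tilde g}^2\le Ce^t$ is the content of equation \eqref{Gammabd} inside the proof of Lemma \ref{lemmatildegestimates}(ii), giving $|\tilde\Gamma|_g\le Ce^{t/2}$, whence \eqref{use1} follows by the triangle inequality. For \eqref{use2}, the identity $\partial_{\bar\ell}\Gamma^p_{ki}=-R_{k\bar\ell i}{}^{\,p}$ applied to both $\tilde g$ and $\hat g$ identifies $\bar\partial H$ with $-(\widetilde{\mathrm{Rm}}-\widehat{\mathrm{Rm}})$ as tensors. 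Lemma \ref{lemmatildegestimates}(ii) supplies $|\widetilde{\mathrm{Rm}}|_g\le Ce^{t/2}$, and the same index-tracking as above gives $|\widehat{\mathrm{Rm}}|_g^2=(g^{2\bar 2})^4|\hat R_{2\bar 2 2\bar 2}|^2\le C$, proving \eqref{use2}.

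For \eqref{use3}, expand $\tilde\nabla_p H^i_{jk} = \partial_p H^i_{jk} + \tilde\Gamma^i_{pr}H^r_{jk} - \tilde\Gamma^r_{pj}H^i_{rk} - \tilde\Gamma^r_{pk}H^i_{jr}$. Each of the three Christoffel-times-$H$ terms is controlled in $g$-norm by $|\tilde\Gamma|_g|H|_g \le Ce^{t/2}\cdot Ce^{t/2}=Ce^t$ using \eqref{use1} and the bound on $|\tilde\Gamma|_g$ from the first paragraph. For $\partial_p H = \partial_p\tilde\Gamma - \partial_p\hat\Gamma$, the pointwise estimate $|\partial_p\tilde\Gamma|_{\tilde g}^2\le Ce^{2t}$ is precisely \eqref{claimGamma} inside the proof of Lemma \ref{lemmatildegestimates}(iii), while $\partial_p\hat\Gamma$ has only the single nonzero component $\partial_2\hat\Gamma^2_{22}$, whose $g$-norm is bounded by the same accounting as in \eqref{use1}. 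Since the entire argument is bookkeeping built on top of Lemma \ref{lemmatildegestimates}, I do not anticipate a genuine obstacle; the only delicate point to watch is that among the inverse metric components only $g^{1\bar 1}$ is of size $O(e^t)$, and the product structure of $\hat g$ precisely arranges that all $\hat g$-contributions contract only against the $O(1)$ component $g^{2\bar 2}$, so they remain uniformly bounded.
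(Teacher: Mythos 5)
Your proposal follows essentially the same route as the paper's proof: in local bundle product coordinates, bound $|\tilde\Gamma|_{\tilde g}^2 \le Ce^t$ via \eqref{Gammabd}, observe that the product K\"ahler structure of $\hat g$ leaves only the single nonzero symbol $\hat\Gamma^2_{22}$ (and curvature component $\hat R_{2\bar 2 2\bar 2}$) so the $\hat g$-side contributions are $O(1)$, use $\partial_{\bar\ell}H^i_{jk} = -\tilde R_{j\bar\ell k}{}^i + \hat R_{j\bar\ell k}{}^i$ together with Lemma \ref{lemmatildegestimates}(ii), and for \eqref{use3} expand $\tilde\nabla H$ and invoke \eqref{claimGamma} for $\partial\tilde\Gamma$. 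The argument is correct, and the supporting index-bookkeeping you supply (e.g.\ that the $\hat g$-pieces contract only against the $O(1)$ component $g^{2\bar 2}$) is exactly the implicit content of the paper's brief remarks.
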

\begin{proof}
At any given point $x\in M$ we can choose local bundle coordinates $(z^1, z^2)$ as before. Then in this coordinate system \eqref{Gammabd} gives
$$|\ti{\Gamma}^i_{jk}|^2_g\leq Ce^t,$$
where we are using the fact that $g$ and $\tilde{g}$ are uniformly equivalent.
Since $\hat{g}$ is a
semi-flat product K\"ahler metric, $\hat{\Gamma}^i_{jk}$ is zero except when $i=j=k=2$,
and so
$$|\hat{\Gamma}^i_{jk}|^2_g\leq C.$$
The bound \eqref{use1} follows immediately from these bounds.

Next, note that
$$\de_{\ov{\ell}}H^i_{jk}=-\ti{R}_{j\ov{\ell}k}{}^i+\hat{R}_{j\ov{\ell}k}{}^i.$$
From Lemma \ref{lemmatildegestimates}, part (ii), we have that
$|\ti{R}_{j\ov{\ell}k}{}^i|_g\leq Ce^{t/2}$, while the fact that $\hat{g}$ is a
semi-flat product K\"ahler metric implies that the only nonzero component of $\hat{R}$ is
$\hat{R}_{2\ov{2}2}{}^2$, and so
\begin{equation}\label{curvbd}
|\hat{R}_{j\ov{\ell}k}{}^i|_g\leq C,
\end{equation}
from which \eqref{use2} follows.

We have that
$$\ti{\nabla}_p H^i_{jk}=\de_p H^i_{jk}-\ti{\Gamma}_{pj}^\ell H^i_{\ell k}-\ti{\Gamma}_{pk}^\ell H^i_{j\ell}+\ti{\Gamma}_{p\ell}^i H^\ell_{jk}.$$
Thanks to \eqref{Gammabd} and \eqref{use1}, in these coordinates we can bound the $|\cdot |_{g}$ norm of the last three terms by $Ce^t$.
As for the first term, we have
$$\de_p H^i_{jk}=\de_p \ti{\Gamma}^i_{jk}-\de_p \hat{\Gamma}^i_{jk},$$
and
$$\de_p \hat{\Gamma}^i_{jk}=\hat{g}^{i\ov{\ell}}\de_p\de_j\hat{g}_{k\ov{\ell}}-\hat{g}^{i\ov{s}}\hat{g}^{r\ov{\ell}}\de_p\hat{g}_{r\ov{s}}
\de_j\hat{g}_{k\ov{\ell}},$$
which is zero except when $i=j=k=p=2$, and so
$$|\de_p \hat{\Gamma}^i_{jk}|_g\leq C.$$
On the other hand
$$|\de_p \ti{\Gamma}^i_{jk}|_g \leq Ce^{t},$$
thanks to \eqref{claimGamma}. This proves \eqref{use3}.
\end{proof}

We now start bounding the terms in \eqref{long}.
We have
$$\nabla_{\ov{\ell}} T_{ik\ov{j}}=\ti{\nabla}_{\ov{\ell}}\ti{T}_{ik\ov{j}}-\ov{\Psi_{\ell j}^q}\ti{T}_{ik\ov{q}}+\ov{H_{\ell j}^q}\ti{T}_{ik\ov{q}},$$
and so thanks to Lemma \ref{lemmatildegestimates} and Lemma \ref{bounds} we have that
$$|\nabla_{\ov{\ell}} T_{ik\ov{j}}|_g\leq Ce^{t/2}+C\mathcal{S}^{1/2}.$$
Therefore we can bound
\begin{equation}\label{unoa}
\begin{split}
		&g^{i\ov{j}}g^{r\ov{s}}g^{a\ov{b}}\left( \nabla_r \ov{T_{bj \ov{a}}} + \nabla_{\ov{b}} T_{ar\ov{j}}
			\right) \Psi^k_{i p} \ov{\Psi^\ell_{s q}}g^{p\ov{q}}g_{k\ov{\ell}} \\
		&+ g^{i\ov{j}}g^{r\ov{s}}g^{a\ov{b}}\left( \nabla_r \ov{T_{bj\ov{a}}}
			+ \nabla_{\ov{b}} T_{ar\ov{j}}
			\right) \Psi_{pi}^k \ov{\Psi_{qs}^\ell}	g^{p\ov{q}}g_{k\ov{\ell}} \\
		& - g^{a\ov{b}}\left( \nabla_k \ov{T_{bs\ov{a}}} + \nabla_{\ov{b}} T_{ak\ov{s}}
			\right) \Psi_{ip}^k \ov{\Psi_{jq}^s} g^{i\ov{j}}g^{p\ov{q}}\leq C(e^{t/2}+\mathcal{S}^{1/2})\mathcal{S}.
\end{split}\end{equation}
Next, we compute
\[\begin{split}
	\nabla_a \nabla_b \ov{T_{ij\ov{k}}}
		=&   \nabla_a (\ti{\nabla}_b \ov{\ti{T}_{ij\ov{k}}} - \Psi_{bk}^r \ov{\ti{T}_{ij\ov{r}}} + H_{bk}^r \ov{\ti{T}_{ij\ov{r}}})  \\
		 =&  \ti{\nabla}_a \ti{\nabla}_b \ov{\ti{T}_{ij\ov{k}}}
		- \Psi_{a b}^r \ti{\nabla}_r \ov{\ti{T}_{ij\ov{k}}}
		- \Psi_{a k}^r \ti{\nabla}_b \ov{\ti{T}_{ij\ov{r}}} \\
	&+ H_{a b}^r \ti{\nabla}_r \ov{\ti{T}_{ij\ov{k}}}
		+ H_{a k}^r \ti{\nabla}_b \ov{\ti{T}_{ij\ov{r}}}  - (\nabla_a \Psi_{bk}^r) \ov{\ti{T}_{ij \ov{r}}}\\
	&	- \Psi_{bk}^r  \ti\nabla_a \ov{\ti{T}_{ij\ov{r}}}  + \Psi_{b k}^{r} \Psi_{ar}^s \ov{\ti{T}_{ij\ov{s}}} -\Psi_{b k}^{r} H_{ar}^s \ov{\ti{T}_{ij\ov{s}}}\\
&+(\ti{\nabla}_a H^r_{bk})\ov{\ti{T}_{ij\ov{r}}} + H_{bk}^r \ti{\nabla}_a\ov{\ti{T}_{ij\ov{r}}}-\Psi_{ab}^s   H_{sk}^r \ov{\ti{T}_{ij\ov{r}}} \\
&-\Psi_{ak}^s H_{bs}^r \ov{\ti{T}_{ij\ov{r}}} + H_{ab}^s   H_{sk}^r \ov{\ti{T}_{ij\ov{r}}}
+ H_{ak}^s H_{bs}^r  \ov{\ti{T}_{ij\ov{r}}}.
\end{split}\]
Using again Lemma \ref{lemmatildegestimates} and Lemma \ref{bounds} we can bound
$$|\nabla_a \nabla_b \ov{T_{ij\ov{k}}}|_g\leq C(e^t+e^{t/2}\mathcal{S}^{1/2}+\mathcal{S}+|\nabla\Psi|_g),$$
and so
\begin{equation}\label{duea}
- 2 \mathrm{Re} \bigg( g^{r\ov{s}}
			(\nabla_i \nabla_p \ov{T_{s \ell\ov{r}}} ) \ov{\Psi_{jq}^\ell} g^{i\ov{j}} g^{p\ov{q}}\bigg)\leq
C(e^t\mathcal{S}^{1/2}+e^{t/2}\mathcal{S}+\mathcal{S}^{3/2}+|\nabla\Psi|_g\mathcal{S}^{1/2}).
\end{equation}
Similarly we have
\[\begin{split}
	\nabla_a \nabla_{\ov{b}} T_{ij\ov{k}}
		=&   \nabla_a (\ti{\nabla}_{\ov{b}} \ti{T}_{ij\ov{k}} - \ov{\Psi_{bk}^r} \ti{T}_{ij\ov{r}} + \ov{H_{bk}^r} \ti{T}_{ij\ov{r}})  \\
		 =&  \ti{\nabla}_a \ti{\nabla}_{\ov{b}} \ti{T}_{ij\ov{k}}
		- \Psi_{a i}^r \ti{\nabla}_{\ov{b}} \ti{T}_{rj\ov{k}}
		- \Psi_{a j}^r \ti{\nabla}_{\ov{b}} \ti{T}_{ir\ov{k}} \\
	&+ H_{a i}^r \ti{\nabla}_{\ov{b}} \ti{T}_{rj\ov{k}}
		+ H_{a j}^r \ti{\nabla}_{\ov{b}} \ti{T}_{ir\ov{k}} - (\nabla_a \ov{\Psi_{bk}^r}) \ti{T}_{ij\ov{r}}\\
	&	- \ov{\Psi_{bk}^r}  \ti\nabla_a \ti{T}_{ij\ov{r}}  + \ov{\Psi_{b k}^{r}} \Psi_{ai}^s \ti{T}_{sj\ov{r}} + \ov{\Psi_{b k}^{r}} \Psi_{aj}^s \ti{T}_{is\ov{r}}\\
&- \ov{\Psi_{b k}^{r}} H_{ai}^s \ti{T}_{sj\ov{r}} - \ov{\Psi_{b k}^{r}} H_{aj}^s \ti{T}_{is\ov{r}}+(\ti{\nabla}_a \ov{H^r_{bk}})\ti{T}_{ij\ov{r}}\\
 &+ \ov{H_{bk}^r} \ti{\nabla}_a\ti{T}_{ij\ov{r}}-\Psi_{ai}^s  \ov{H_{bk}^r} \ti{T}_{sj\ov{r}} -\Psi_{aj}^s  \ov{H_{bk}^r} \ti{T}_{is\ov{r}} \\
 &+H_{ai}^s  \ov{H_{bk}^r} \ti{T}_{sj\ov{r}} +H_{aj}^s  \ov{H_{bk}^r} \ti{T}_{is\ov{r}}.
\end{split}\]
Using again Lemma \ref{lemmatildegestimates} and Lemma \ref{bounds} we can bound
$$|\nabla_a \nabla_{\ov{b}} T_{ij\ov{k}}|_g\leq C(e^t+e^{t/2}\mathcal{S}^{1/2}+\mathcal{S}+|\ov{\nabla}\Psi|_g),$$
and so
\begin{equation}\label{trea}
- 2 \mathrm{Re} \bigg( g^{r\ov{s}}
			\nabla_i \nabla_{\ov{s}} T_{rp\ov{\ell}} \ov{\Psi_{jq}^\ell} g^{i\ov{j}} g^{p\ov{q}}\bigg)\leq
C(e^t\mathcal{S}^{1/2}+e^{t/2}\mathcal{S}+\mathcal{S}^{3/2}+|\ov{\nabla}\Psi|_g\mathcal{S}^{1/2}).
\end{equation}
Next, we have
$$\de_{\ov{p}}\Psi^\ell_{qk}=\hat{R}_{q\ov{p}k}{}^{\ell}-R_{q\ov{p}k}{}^{\ell},$$
and so we can bound
\begin{equation}\label{quattroa}
2 \mathrm{Re} \bigg( g^{r\ov{s}} T_{ir}^a R_{a\ov{s}p\ov{\ell}} \ov{\Psi_{jq}^\ell} g^{i\ov{j}} g^{p\ov{q}}\bigg)\leq
C|\ov{\nabla}\Psi|_g \mathcal{S}^{1/2}+C|\widehat{\textrm{Rm}}|_{g}\mathcal{S}^{1/2}\leq C(1+|\ov{\nabla}\Psi|_g)\mathcal{S}^{1/2},
\end{equation}
because of \eqref{curvbd}.

Finally, we have
$$\nabla_r \hat{R}_{i\ov{s} p}{}^k=\hat{\nabla}_{r}\hat{R}_{i\ov{s} p}{}^k-\Psi_{ri}^\ell  \hat{R}_{\ell\ov{s} p}{}^k-\Psi_{rp}^\ell  \hat{R}_{i\ov{s} \ell}{}^k+\Psi_{r\ell}^k  \hat{R}_{i \ov{s} p}{}^\ell.$$
But $\hat{g}$ is a product of K\"ahler-Einstein metrics on Riemann surfaces, therefore $\hat{\nabla}_{r}\hat{R}_{i\ov{s} p}{}^k=0$. Using \eqref{curvbd} again, we conclude that
\begin{equation}\label{cinquea}
-2 \mathrm{Re} \bigg( g^{r\ov{s}}  g_{k\ov{\ell}}\nabla_r \hat{R}_{i\ov{s} p}{}^k \ov{\Psi_{jq}^\ell} g^{i\ov{j}} g^{p\ov{q}}\bigg)\leq C\mathcal{S}.
\end{equation}

Putting together \eqref{long}, \eqref{unoa}, \eqref{duea}, \eqref{trea}, \eqref{quattroa} and \eqref{cinquea}, we conclude that
\begin{equation}\label{evols}
\left(\ddt - \Delta\right) \mathcal{S} \leq C(e^{t/2}\mathcal{S}+e^t\mathcal{S}^{1/2}+\mathcal{S}^{3/2})-\frac{1}{2} |\ov{\nabla} \Psi|^2_g -\frac{1}{2} |\nabla \Psi|^2_g.
\end{equation}
Next, we define K\"ahler metrics on $U$ by $\hat{\omega}_t=e^{-t}\omega_E+\omega_S$. These are uniformly equivalent to $\omega$ independent of $t$ thanks to Theorem \ref{metricbdthm}.
Furthermore, the covariant derivative of $\hat{\omega}_t$ is independent of $t$ and equal to that of $\hat{\omega}$, and we will denote it by $\hat{\nabla}$ as before.
The same is true for the curvature of $\hat{\omega}_t$, which equals $\hat{R}_{i\ov{j}k}{}^p$.
We use \cite[Proposition 3.1]{TW} to compute
\[\begin{split}
\left( \ddt{} - \Delta \right) \tr{\hat{\omega}_t}{\omega}=&-g^{p\ov{j}}g^{i\ov{q}}\hat{g}^{k\ov{\ell}}_t\hat{\nabla}_k g_{i\ov{j}}\hat{\nabla}_{\ov{\ell}}g_{p\ov{q}}
-g^{i\ov{j}}\hat{g}^{k\ov{\ell}}_t\hat{g}^{p\ov{q}}_tg_{k\ov{q}}\hat{R}_{i\ov{\ell}p\ov{j}}\\
&-g^{i\ov{j}}\hat{g}^{k\ov{\ell}}_t \hat{\nabla}_i   \ov{\ti{T}_{j\ell\ov{k}}}   -
 g^{i\ov{j}}\hat{g}^{k\ov{\ell}}_t \hat{\nabla}_{\ov{\ell}}  \ti{T}_{ik\ov{j}}   \\
 & -\tr{\hat{\omega}_t}{\omega}+e^{-t}\hat{g}_t^{i\ov{\ell}}\hat{g}_t^{k\ov{j}}g_{i\ov{j}}(g_E)_{k\ov{\ell}}.
\end{split}\]
We have that
$$ \hat{\nabla}_i   \ov{\ti{T}_{j\ell\ov{k}}}= \ti{\nabla}_i   \ov{\ti{T}_{j\ell\ov{k}}}+H_{ik}^p  \ov{\ti{T}_{j\ell\ov{p}}},$$
and so we can use Lemma \ref{lemmatildegestimates} and \eqref{use1} to bound
$$|\hat{\nabla}_i   \ov{\ti{T}_{j\ell\ov{k}}}|_g\leq Ce^{t/2}.$$
Hence, making use of (\ref{curvbd}), we have
\begin{equation}\label{evoltr}
\left( \ddt{} - \Delta \right) \tr{\hat{\omega}_t}{\omega}\leq -C^{-1}\mathcal{S}+Ce^{t/2},
\end{equation}
for a uniform $C>0$.

We now give the proof of Theorem \ref{c3}, along the lines of \cite{ShW}.
\begin{proof} Let $K$ be a large constant such that
$$\frac{K}{2}\leq K-\tr{\hat{\omega}_t}{\omega}\leq K,$$
whose value will be fixed later.  Let $0\leq \rho\leq 1$ be a smooth nonnegative cutoff function supported in the open ball $B$ in $S$, which is identically $1$ in a smaller neighborhood
of $y$, and denote the pullback $\rho\circ\pi$ also by $\rho$.
Consider the quantity
$$Q=\rho^2\frac{e^{-2t/3}\mathcal{S}}{K-\tr{\hat{\omega}_t}{\omega}}+\tr{\hat{\omega}_t}{\omega} \qquad \textrm{on } \textrm{supp} (\rho) \subset U. $$
Our goal is to obtain an upper bound for $Q$, giving the bound $\mathcal{S} \le Ce^{2t/3}$ on a smaller neighborhood of $U$, which we may assume contains $V$.  We will apply the maximum principle to this function $Q$, noting that it is  equal to $\tr{\hat{\omega}_t}{\omega}$, and hence is bounded, on the boundary of $\textrm{supp}(\rho)$.

We start with the following observations. Since $\rho$ is the pullback of a function from the base $S$, we have from the estimate $C \omega \ge \omega_S$,
\begin{equation}\label{simple1}
|\nabla \rho|^2_g\leq C,\quad |\Delta \rho|_g\leq C,
\end{equation}
independent of $t$. Furthermore, we have the simple inequalities (see \cite[(3.9), (3.10)]{ShW}),
\begin{equation}\label{simple2}
|\nabla \tr{\hat{\omega}_t}{\omega}|^2_g\leq C\mathcal{S},\quad |\nabla \mathcal{S}|^2_g\leq 2\mathcal{S}(|\nabla\Psi|^2_g+|\ov{\nabla}\Psi|^2_g),
\end{equation}
where the first one also follows from the argument for \eqref{cs}.

We can now compute
\[\begin{split}
	\left(\ddt - \Delta\right)Q = &-\frac{2\rho^2 e^{-2t/3}\mathcal{S}}{3(K-\tr{\hat{\omega}_t}{\omega})}
+\frac{\rho^2 e^{-2t/3}}{K-\tr{\hat{\omega}_t}{\omega}}\left(\ddt - \Delta\right)\mathcal{S}\\
&+\left(1+ \rho^2\frac{e^{-2t/3}\mathcal{S}}{(K-\tr{\hat{\omega}_t}{\omega})^2}\right)\left(\ddt - \Delta\right)\tr{\hat{\omega}_t}{\omega}\\
&-2\rho^2 e^{-2t/3}\frac{\mathrm{Re}\langle\nabla \mathcal{S},\nabla  \tr{\hat{\omega}_t}{\omega}\rangle_g}{(K-\tr{\hat{\omega}_t}{\omega})^2}
-2\rho^2\frac{e^{-2t/3}\mathcal{S}}{(K-\tr{\hat{\omega}_t}{\omega})^3}|\nabla \tr{\hat{\omega}_t}{\omega}|^2_g\\
&-\Delta(\rho^2)\left(\frac{e^{-2t/3}\mathcal{S}}{K-\tr{\hat{\omega}_t}{\omega}} \right)
-4\rho \frac{e^{-2t/3}}{K-\tr{\hat{\omega}_t}{\omega}}\mathrm{Re}\left\langle \nabla\rho, \nabla \mathcal{S}\right\rangle_g\\
&-4\rho \frac{e^{-2t/3}\mathcal{S}}{(K-\tr{\hat{\omega}_t}{\omega})^2} \mathrm{Re}\left\langle \nabla\rho, \nabla \tr{\hat{\omega}_t}{\omega}\right\rangle_g.
\end{split}\]
Let $(x_0, t_0)$ be a point in $\textrm{supp}(\rho)$ at which  $Q$  achieves a maximum.  We may assume without loss of generality that $t_0>0$ and $x_0$ lies in the interior of $\textrm{supp}(\rho)$.  We have at $(x_0, t_0)$,
\[\begin{split}
2&\rho\nabla \rho \frac{e^{-2t/3}\mathcal{S}}{K-\tr{\hat{\omega}_t}{\omega}}
+\rho^2\frac{e^{-2t/3}}{K-\tr{\hat{\omega}_t}{\omega}}\nabla\mathcal{S}
+\rho^2 \frac{e^{-2t/3}\mathcal{S}}{(K-\tr{\hat{\omega}_t}{\omega})^2}
\nabla \tr{\hat{\omega}_t}{\omega} + \nabla \tr{\hat{\omega}_t}{\omega} =0.
\end{split}\]
Taking the inner product of this with $\nabla \tr{\hat{\omega}_t}{\omega}$, we see that, at this point,
\begin{equation}
\begin{split}
	0\leq \left(\ddt - \Delta\right)Q = &-\frac{2\rho^2 e^{-2t/3}\mathcal{S}}{3(K-\tr{\hat{\omega}_t}{\omega})}
+\frac{\rho^2 e^{-2t/3}}{K-\tr{\hat{\omega}_t}{\omega}}\left(\ddt - \Delta\right)\mathcal{S}\\
&+\left(1+ \rho^2\frac{e^{-2t/3}\mathcal{S}}{(K-\tr{\hat{\omega}_t}{\omega})^2}\right)\left(\ddt - \Delta\right)\tr{\hat{\omega}_t}{\omega}\\
&-\Delta(\rho^2)\left(\frac{e^{-2t/3}\mathcal{S}}{K-\tr{\hat{\omega}_t}{\omega}} \right)\\
&-4\rho \frac{e^{-2t/3}}{K-\tr{\hat{\omega}_t}{\omega}}\mathrm{Re}\left\langle \nabla\rho, \nabla \mathcal{S}\right\rangle_g
 + \frac{2 | \nabla \tr{\hat{\omega}_t}{\omega}|^2}{K-\tr{\hat{\omega}_t}{\omega}}.
\end{split} \label{eQ}
\end{equation}
We use \eqref{evols}, \eqref{evoltr}, \eqref{simple1}, \eqref{simple2}  to obtain at this point,
\[\begin{split}
0 \le  &  \frac{2\rho^2 e^{-2t/3}}{K} \left( C(e^{t/2} \mS + e^t \mS^{1/2} + \mS^{3/2}) - \frac{1}{4} |\ov{\nabla} \Psi|^2_g -\frac{1}{4} |\nabla \Psi|^2_g\right) \\
& + \left( 1+\frac{4 \rho^2 e^{-2t/3} \mS}{K^2} \right) \left( - \frac{\mS}{C} + Ce^{t/2} \right) \\
& + \frac{Ce^{-2t/3}\mS}{K} + \frac{\rho^2 e^{-2t/3}}{2K} \left(  |\ov{\nabla} \Psi|^2_g +  |\nabla \Psi|^2_g\right) + \frac{C\mathcal{S}}{K},
\end{split} \]
where we have used the Young inequality and (\ref{simple2}):
\begin{align*}
4\rho \frac{e^{-2t/3}}{K-\tr{\hat{\omega}_t}{\omega}} | \langle \nabla \rho, \nabla \mS \rangle_g| \le {} &   \frac{Ce^{-2t/3} \mS}{K}  +  \frac{\rho^2e^{-2t/3}}{4K} \frac{| \nabla \mS|^2}{\mS} \\
\le {} & \frac{Ce^{-2t/3}\mS }{K}   + \frac{\rho^2 e^{-2t/3}}{2K}  \left(  |\ov{\nabla} \Psi|^2_g +  |\nabla \Psi|^2_g\right).
\end{align*}
Suppose that at $(x_0, t_0)$ we have $e^{-2t/3}\mS K^{-1} \ge 1$, and hence $e^t \le \mS^{3/2} K^{-3/2}$ (otherwise, $e^{-2t/3}\mS$ is bounded and thus so is $Q$). We may also assume that $\mS$ is much larger than $K$, say $\mS \ge K^4$. Then at this point,
\begin{align*}
\frac{\mS}{C} + \frac{4 \rho^2 e^{-2t/3} \mS^2}{CK^2}  \le {} & \frac{C \rho^2 e^{-2t/3}}{K} \left( \frac{\mS^{7/4}}{K^{3/4}} + \frac{\mS^2}{K^{3/2}} + \frac{\mS^{2}}{K^2} \right) \\
& + \frac{C \mS^{3/4}}{K^{3/4}} + \frac{C\rho^2 e^{-2t/3} \mS^{7/4}}{K^{11/4}} + \frac{Ce^{-2t/3} \mS}{K} + \frac{C \mS}{K}
\end{align*}
Choosing $K$ to be much larger than $C^2$, we see that the second term on the left hand side of this inequality dominates all the terms involving $\rho^2$ on the right hand side.  This gives
$$\mS \left( \frac{1}{C} - \frac{C}{K^{3/4}\mathcal{S}^{1/4}} - \frac{Ce^{-2t/3}}{K} - \frac{C}{K} \right) \le 0,$$
a contradiction since we chose $K$ to be much larger than $C^2$.
It follows that  $Q$ is uniformly bounded from above at the point $(x_0, t_0)$.  This  completes the proof of the theorem.
\end{proof}

\section{Proofs of the main results} \label{sectionproofs}

In this section we give the proofs of Theorem \ref{main} and Corollary \ref{cor}.

\begin{proof}[Proof of Theorem \ref{main}]
The estimate proved in Theorem \ref{theorempinch} immediately implies that given any $0<\ve <1/8$ there exists a constant $C$ such that
$$\|\omega(t)-\ti{\omega}(t)\|_{C^0(M,g_0)}\leq Ce^{-\ve t}.$$
From the definition of $\ti{\omega}(t)=e^{-t}\omega_{\mathrm{flat}}+(1-e^{-t})\pi^*\omega_S$ we deduce that
$$\|\omega(t)-\pi^*\omega_S\|_{C^0(M,g_0)}\leq Ce^{-\ve t}.$$
The Gromov-Hausdorff convergence of $(M,\omega(t))$ to $(S,\omega_S)$ follows from Lemma \ref{GH} below.
Finally, given any $y\in S$, the exponential convergence of  $e^{t}\omega(t)|_{E_y}$ to
$\omega_{\mathrm{flat},y}$ in the $C^1(E_y, g_0)$ topology (uniformly in $y$), follows from Corollary \ref{fiberconv} and the compactness of $M$.
\end{proof}

We used the following elementary result, which is undoubtedly well-known (cf. \cite[Theorem 8.1]{TW2}).
\begin{lemma}\label{GH} Let $\pi:M\to S$ be a fiber bundle, where $(M,g_M)$ and $(S,g_S)$ are closed Riemannian manifolds.
If $g(t), t\geq 0,$ is a family of Riemannian metrics on $M$ with $\|g(t)-\pi^*g_S\|_{C^0(M,g_M)}\to 0$ as $t\to\infty$, then
$(M,g(t))$ converges to $(S,g_S)$ in the Gromov-Hausdorff sense as $t\to\infty$.
\end{lemma}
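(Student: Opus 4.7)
The plan is to show that the projection $\pi:(M,g(t))\to(S,g_S)$ is an asymptotic Gromov--Hausdorff approximation. Since $\pi$ is surjective (as a fiber bundle over a connected base), density of $\pi(M)=S$ in $S$ is automatic, so it suffices to prove the uniform distance approximation
$$\sup_{x,y\in M}\bigl|d_{g(t)}(x,y)-d_{g_S}(\pi(x),\pi(y))\bigr|\longrightarrow 0, \qquad t\to\infty,$$
where we set $\ve_t:=\|g(t)-\pi^*g_S\|_{C^0(M,g_M)}\to 0$. The essential pointwise bound $\bigl||v|_{g(t)}-|v|_{\pi^*g_S}\bigr|\le\sqrt{\ve_t}\,|v|_{g_M}$ (from $|\sqrt a-\sqrt b|\le\sqrt{|a-b|}$ applied to the squared norms) integrates along any smooth curve $\alpha$ to yield $\bigl|L_{g(t)}(\alpha)-L_{\pi^*g_S}(\alpha)\bigr|\le\sqrt{\ve_t}\,L_{g_M}(\alpha)$.

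For the upper bound $d_{g(t)}(x,y)\le d_{g_S}(\pi(x),\pi(y))+O(\sqrt{\ve_t})$, I would fix a smooth horizontal distribution on $\pi$, horizontally lift a $g_S$-minimizing geodesic from $\pi(x)$ to $\pi(y)$ to a curve $\tilde\gamma$ starting at $x$, and connect the endpoint of $\tilde\gamma$ to $y$ within the fiber $E_{\pi(y)}$. The horizontal lift has bounded $g_M$-length (depending only on $\diam(S,g_S)$ and the chosen connection), so its $g(t)$-length is at most $d_{g_S}(\pi(x),\pi(y))+O(\sqrt{\ve_t})$. Since $\pi^*g_S$ vanishes on vertical vectors, $|v|_{g(t)}\le\sqrt{\ve_t}\,|v|_{g_M}$ for vertical $v$, so each fiber has $g(t)$-diameter $O(\sqrt{\ve_t})$ and the fiber-connecting arc contributes only $O(\sqrt{\ve_t})$.

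For the converse inequality I would project a $g(t)$-minimizing geodesic $\alpha$ from $x$ to $y$ and obtain
$$d_{g_S}(\pi(x),\pi(y))\le L_{g_S}(\pi\circ\alpha)=L_{\pi^*g_S}(\alpha)\le d_{g(t)}(x,y)+\sqrt{\ve_t}\,L_{g_M}(\alpha).$$
The main technical step is the uniform bound $L_{g_M}(\alpha)\le C$ for $g(t)$-minimizers, independent of $t$. For $t$ large, $g(t)$ is uniformly equivalent to $g_M$ on any fixed horizontal complement and only collapses in the fiber directions; moreover, the $g(t)$-orthogonal complement to the vertical subspace is only an $O(\ve_t)$-perturbation of such a horizontal distribution. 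Consequently a $g(t)$-minimizer stays close to a horizontal lift of a $g_S$-geodesic, with vertical excursion controlled by a single fiber's $g_M$-diameter. Establishing this uniform $g_M$-length control on $g(t)$-minimizers in the collapsing regime is the main obstacle; once in hand, combining the two inequalities gives $|d_{g(t)}(x,y)-d_{g_S}(\pi(x),\pi(y))|=O(\sqrt{\ve_t})$ uniformly in $x,y$, and Gromov--Hausdorff convergence follows.
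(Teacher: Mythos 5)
Your overall plan --- proving that $\pi$ is an asymptotic Gromov--Hausdorff approximation --- matches the paper's, and your upper-bound argument (a horizontal lift via a fixed connection, then a short vertical arc) is fine and interchangeable with the paper's patching of local lifts over a finite trivializing cover.

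The gap is in the lower bound, as you acknowledge, and the sketch you offer to close it does not work. You reduce matters to the uniform bound $L_{g_M}(\alpha)\le C$ for $g(t)$-minimizers $\alpha$, and you justify this by asserting that the $g(t)$-orthogonal complement of the vertical subspace $V$ is an $O(\ve_t)$-perturbation of a fixed horizontal distribution. That does not follow from $\|g(t)-\pi^*g_S\|_{C^0(M,g_M)}\to 0$. The point is that as $g(t)$ degenerates on $V$, a cross term $g(t)(v_V,v_H)$ of size $O(\ve_t)$ is of the \emph{same order} as the vertical diagonal $g(t)(v_V,v_V)$, so the $g(t)$-orthocomplement of $V$ can tilt arbitrarily far from the horizontal distribution and typically becomes nearly vertical. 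For instance, on the two-torus fibered over the circle with $g(t)=dx^2+\epsilon^2\,dy^2+2\delta\,dx\,dy$, where $0<|\delta|<\epsilon\to 0$, the $g(t)$-orthocomplement of $\partial_y$ has slope $-\delta/\epsilon^2$, which is unbounded when $|\delta|$ is comparable to $\epsilon$, even though $\|g(t)-dx^2\|_{C^0}\sim\max(|\delta|,\epsilon^2)\to 0$. So the mechanism you propose for bounding $L_{g_M}(\alpha)$ fails. The paper avoids $L_{g_M}(\alpha)$ entirely: it invokes $L_{\pi^*g_S}(\gamma)\le L_{g(t)}(\gamma)+\ve$ for a $g(t)$-minimizer, which rests on the pointwise comparison $|v|_{\pi^*g_S}\le(1+o(1))|v|_{g(t)}$ --- equivalently $g(t)\ge(1-o(1))\pi^*g_S$ as quadratic forms --- together with the uniform bound $L_{g(t)}(\gamma)=d_{g(t)}(x,y)\le C$ supplied by the upper half of the argument. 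That pointwise comparison, not a $g_M$-length bound on the minimizer, is what you should be aiming for; it asks that the cross terms of $g(t)$ be small relative to $g(t)$'s own vertical block rather than merely small in $g_M$-norm, and in the paper's application it is delivered by the earlier estimates that pinch $\omega(t)$ and give $\omega(t)\ge c\,\pi^*\omega_S$.
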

\begin{proof}
For any $y\in S$ we denote by $E_y=\pi^{-1}(y)$ the fiber over $y$. Fix $\ve>0$, denote by $L_t$ the length of a curve in $M$ measured with respect to $g(t)$,
and by $d_t$ the induced distance function on $M$. Similarly we have $L_S, d_S$ on $S$.
Using the standard formulation of Gromov-Hausdorff convergence (see e.g. \cite{TW2}), let $F=\pi:M\to S$ and define a map $G:S\to M$ by sending every point $y\in S$ to some chosen point in $M$ on the fiber $E_y$. The map $G$ will in general be discontinuous, and it
satisfies $F\circ G=\mathrm{Id}$, so
\begin{equation}\label{gh1}
d_S(y,F(G(y)))=0.
\end{equation}
On the other hand since $g(t)|_{E_y}$ goes to zero, we have that for any $t$ large and for any $x\in M$
\begin{equation}\label{gh2}
d_t(x, G(F(x)))\leq \ve.
\end{equation}
Next, given two points $x_1,x_2\in M$ let $\gamma:[0,L]\to S$ be a unit-speed minimizing geodesic in $S$ joining $F(x_1)$ and $F(x_2)$.
Since the bundle $\pi$ is locally trivial, we can cover the image of $\gamma$ by finitely many open sets $U_j, 1\leq j\leq N,$  such that
$\pi^{-1}(U_j)$ is diffeomorphic to $U_j\times E$ (where $E$ is the fiber of the bundle) and there is a subdivision $0=t_0<t_1<\dots<t_N=L$ of $[0,L]$ such that $\gamma([t_{j-1},t_j])\subset U_j$. Fix a point $e\in E$, and use the trivializations to define $\ti{\gamma}_j(s)=(\gamma(s),e)$, for $s\in [t_{j-1},t_j]$, which are curves in $M$ with the
property that
$$|L_t(\ti{\gamma}_j)-L_S(\gamma|_{[t_{j-1},t_j]})|\leq \ve/N,$$
as long as $t$ is sufficiently large (because $g(t)\to\pi^*g_S$).
The points $\ti{\gamma}_{j}(t_j)$ and $\ti{\gamma}_{j+1}(t_j)$ lie in the same fiber of $\pi$, so we can join them by a curve contained in
this fiber with $L_t$-length at most $\ve/2N$ (for $t$ large). We also join $x_1$ with $\ti{\gamma}_1(0)$ and
$x_2$ with $\ti{\gamma}_N(L)$ in the same fashion. Concatenating these ``vertical'' curves and the curves $\ti{\gamma}_j$, we obtain a piecewise
smooth curve $\ti{\gamma}$ in $M$ joining $x_1$ and $x_2$, with $\pi(\ti{\gamma})=\gamma$ and
$|L_t(\ti{\gamma})-d_S(F(x_1),F(x_2))|\leq 2\ve.$ Therefore,
\begin{equation}\label{gh3}
d_t(x_1,x_2)\leq L_t(\ti{\gamma})\leq d_S(F(x_1),F(x_2))+2\ve.
\end{equation}
Since $F\circ G=\mathrm{Id},$ we also have that for all $t$ large and for all $y_1, y_2\in S$,
\begin{equation}\label{gh4}
d_t(G(y_1),G(y_2))\leq d_S(y_1,y_2)+2\ve.
\end{equation}
Given now two points $x_1,x_2\in M$, let $\gamma$ be a unit-speed minimizing $g(t)$-geodesic joining them. If we denote by $L_{\pi^*g_S}(\gamma)$ the length of $\gamma$
using the degenerate metric $\pi^*g_S$, then we have for $t$ large,
\begin{equation}\label{gh5}
d_S(F(x_1),F(x_2))\leq L_S(F(\gamma))=L_{\pi^*g_S}(\gamma)\leq L_t(\gamma)+\ve =d_t(x_1,x_2)+\ve,
\end{equation}
where we used again that $g(t)\to\pi^*g_S$. Obviously this also implies that for all $t$ large and for all $y_1, y_2\in S$,
\begin{equation}\label{gh6}
d_S(y_1,y_2)\leq d_t(G(y_1),G(y_2))+\ve.
\end{equation}
Combining \eqref{gh1}, \eqref{gh2}, \eqref{gh3}, \eqref{gh4}, \eqref{gh5} and \eqref{gh6} we get the required Gromov-Hausdorff convergence.
\end{proof}

\begin{proof}[Proof of Corollary \ref{cor}]
The proof is similar to \cite[Theorem 8.2]{TW2}. From \cite[Lemmas 1, 2]{Br2} or \cite[Theorem 7.4]{Wa} we see that there is a finite unramified covering $p:M'\to M$ (with deck transformation group $\Gamma$) which is also a minimal properly elliptic surface $\pi':M'\to S'$ and $\pi'$ is an elliptic fiber bundle with $S'$ a compact Riemann surface of genus at least $2$. Furthermore, $\Gamma$ also acts on $S'$ (so that $\pi'$ is $\Gamma$-equivariant),
with finitely many fixed points whose union $Z$ is precisely the image of the multiple fibers of $\pi$, with quotient $S=S'/\Gamma$, and so that the quotient map $q:S'\to S$ satisfies
$q\circ\pi' = \pi\circ p.$

Denote by $\omega_{S'}$ the orbifold K\"ahler-Einstein metric on $S'$ with $\Ric(\omega_{S'})=-\omega_{S'}$.
From the description of $M$ and $M'$ as quotients of $H\times\mathbb{C}^*$, where $H$ is the upper half plane in $\mathbb{C}$ (see e.g. \cite{Ma}, \cite[Section 8]{TW2}), it follows that $\pi'^*\omega_{S'}$ is a smooth real $(1,1)$ form on $M'$, which also equals
$p^*\pi^*\omega_S$.
Indeed, if we let $z\in H$ be the variable in the upper half plane, $w\in\mathbb{C}^*$, and $y=\mathrm{Im}z$, then from the arguments in \cite[Section 8]{TW2} we see
that the form $\pi^*\omega_S$ on $M$ is induced from the form $\frac{1}{2y^2}\mn dz\wedge d\ov{z}$ on $H\times\mathbb{C}^*$, and the exact same formula holds on $M'$.

Given any Gauduchon metric $\omega_0$ on $M$, call $\omega(t)$ its evolution under the normalized Chern-Ricci flow on $M$, as before.
Let $\omega'_0=p^*\omega_0$, which is a $\Gamma$-invariant Gauduchon metric on $M'$.
If we call $\omega'(t)$ its evolution under the normalized Chern-Ricci flow on $M'$, then $\omega'(t)$ is also $\Gamma$-invariant, and equal to $p^*\omega(t)$.
Furthermore, $\Gamma$ also acts by isometries of the distance function $d_{S'}$ of $\omega_{S'}$, with quotient space $(S,d_S)$, the distance function of the orbifold metric $\omega_S$.

Now Theorem \ref{main} applied to the elliptic bundle $\pi':M'\to S'$ shows that
$(M',\omega'(t))$ converges to $(S', \omega_{S'})$ in the Gromov-Hausdorff topology. But exactly as in \cite[Theorem 8.2]{TW2} we see that the convergence happens also
in the $\Gamma$-equivariant Gromov-Hausdorff topology, and therefore by \cite[Theorem 2.1]{Fuk} or \cite[Lemma 1.5.4]{Ro} we conclude that $(M,\omega(t))$ converges to $(S,d_S)$ in the Gromov-Hausdorff topology.

Now we apply Theorem \ref{main} again to $M'$ to see that
$$\|\omega'(t)-\pi'^*\omega_{S'}\|_{C^0(M',p^*g_0)}\leq Ce^{-\ve t}.$$
Fix now an open set $U$ of $M$, small enough so that $p^{-1}(U)$ is a disjoint union of finitely many copies $U_j$ of $U$. Then
$p:U_j\to U$ is a biholomorphism for each $j$ and the $\Gamma$-action on $p^{-1}(U)$ permutes the $U_j$'s. Therefore for each $j$, the map $p:U_j\to U$ gives an isometry
between $\omega'(t)|_{U_j}$ and $\omega(t)|_{U}$, and also between $(\pi'^*\omega_{S'})|_{U_j}$ and $(\pi^*\omega_{S})|_{U}$.
Fixing one value of $j$, from $\|\omega'(t)-\pi'^*\omega_{S'}\|_{C^0(U_j,p^*g_0)}\leq Ce^{-\ve t},$ we conclude that
$$\|\omega(t)-\pi^*\omega_{S}\|_{C^0(U,g_0)}\leq Ce^{-\ve t}.$$
Covering $M$ by finitely many such open sets $U$ shows that $\omega(t)$ converges to $\pi^*\omega_S$ in the $C^0(M,g_0)$ topology.

Finally, fix any point $y\in S\backslash Z$, and let $V$ be a small open neighborhood of $y$ such that $\pi^{-1}(V)\cong V\times E$ and
$q^{-1}(V)$ is a disjoint union of finitely many copies $V_j$ of $V$ with points $y_j\in V_j$ mapping to $y$ and with $q:V_j\to V$ a biholomorphism. Then $\pi'^{-1}(V_1)\cong V_1\times E$,
and under these identifications the biholomorphism $p:\pi'^{-1}(V_1)\to \pi^{-1}(V)$ equals $(q,\mathrm{Id}):V_1\times E\to V\times E$.
Under this map the fiber $E'_{y_1}:=\pi'^{-1}(y_1)$ is carried to the fiber $E_y$. Applying Theorem \ref{main} to $M'$, we
see that $e^{t}\omega'(t)|_{E'_{y_1}}$ converges exponentially fast in the $C^1(E'_{y_1}, g'_0)$ topology to $\omega'_{\mathrm{flat},y_1}$, the
flat K\"ahler metric on $E'_{y_1}$ cohomologous to $[\omega'_0|_{E'_{y_1}}]$, and the convergence is uniform when varying $y_1$. But the local biholomorphism $p$ maps
$\omega'(t)$ to $\omega(t)$, and $\omega'_{\mathrm{flat},y_1}$ to $\omega_{\mathrm{flat},y}$, and the result follows.
\end{proof}

\bigskip
\noindent
{\bf Acknowledgements.}  \ The authors thank the referee for some suggestions which improved the presentation.

\end{document}